\documentclass[11pt]{amsart}   
\usepackage{amssymb,amscd,latexsym,verbatim}   
\usepackage{amsmath}
\usepackage[bookmarksnumbered, colorlinks, plainpages]{hyperref}
\usepackage[margin=1.4in]{geometry}
 
\usepackage{tikz}
\usepackage{epsfig,graphicx}

\begin{document}

\newcommand{\mmbox}[1]{\mbox{${#1}$}}
\newcommand{\proj}[1]{\mmbox{{\mathbb P}^{#1}}}
\newcommand{\Cr}{C^r(\Delta)}
\newcommand{\CR}{C^r(\hat\Delta)}
\newcommand{\affine}[1]{\mmbox{{\mathbb A}^{#1}}}
\newcommand{\Ann}[1]{\mmbox{{\rm Ann}({#1})}}
\newcommand{\caps}[3]{\mmbox{{#1}_{#2} \cap \ldots \cap {#1}_{#3}}}
\newcommand{\Proj}{{\mathbb P}}
\newcommand{\N}{{\mathbb N}}
\newcommand{\Z}{{\mathbb Z}}
\newcommand{\R}{{\mathbb R}}
\newcommand{\A}{{\mathcal{A}}}
\newcommand{\Tor}{\mathop{\rm Tor}\nolimits}
\newcommand{\Der}{\mathop{\rm Der}\nolimits}
\newcommand{\Exp}{\mathop{\rm exp}\nolimits}
\newcommand{\Ext}{\mathop{\rm Ext}\nolimits}
\newcommand{\Hom}{\mathop{\rm Hom}\nolimits}
\newcommand{\im}{\mathop{\rm Im}\nolimits}
\newcommand{\rank}{\mathop{\rm rank}\nolimits}
\newcommand{\supp}{\mathop{\rm supp}\nolimits}
\newcommand{\arrow}[1]{\stackrel{#1}{\longrightarrow}}
\newcommand{\CB}{Cayley-Bacharach}
\newcommand{\coker}{\mathop{\rm coker}\nolimits}
\sloppy
\newtheorem{defn0}{Definition}[section]
\newtheorem{prop0}[defn0]{Proposition}
\newtheorem{quest0}[defn0]{Question}
\newtheorem{thm0}[defn0]{Theorem}
\newtheorem{lem0}[defn0]{Lemma}
\newtheorem{corollary0}[defn0]{Corollary}
\newtheorem{example0}[defn0]{Example}
\newtheorem{remark0}[defn0]{Remark}
\newtheorem{prob0}[defn0]{Problem}
\newtheorem{conj0}[defn0]{Conjecture}

\newenvironment{defn}{\begin{defn0}}{\end{defn0}}
\newenvironment{prop}{\begin{prop0}}{\end{prop0}}
\newenvironment{quest}{\begin{quest0}}{\end{quest0}}
\newenvironment{thm}{\begin{thm0}}{\end{thm0}}
\newenvironment{lem}{\begin{lem0}}{\end{lem0}}
\newenvironment{cor}{\begin{corollary0}}{\end{corollary0}}
\newenvironment{exm}{\begin{example0}\rm}{\end{example0}}
\newenvironment{rem}{\begin{remark0}\rm}{\end{remark0}}
\newenvironment{prob}{\begin{prob0}\rm}{\end{prob0}}
\newenvironment{conj}{\begin{conj0}}{\end{conj0}}

\newcommand{\defref}[1]{Definition~\ref{#1}}
\newcommand{\propref}[1]{Proposition~\ref{#1}}
\newcommand{\thmref}[1]{Theorem~\ref{#1}}
\newcommand{\lemref}[1]{Lemma~\ref{#1}}
\newcommand{\corref}[1]{Corollary~\ref{#1}}
\newcommand{\exref}[1]{Example~\ref{#1}}
\newcommand{\secref}[1]{Section~\ref{#1}}
\newcommand{\remref}[1]{Remark~\ref{#1}}
\newcommand{\questref}[1]{Question~\ref{#1}}
\newcommand{\probref}[1]{Problem~\ref{#1}}
\newcommand{\conjref}[1]{Conjecture~\ref{#1}}

\newcommand{\std}{Gr\"{o}bner}
\newcommand{\jq}{J_{Q}}
\def\Ree#1{{\mathcal R}(#1)}


\title{Associated primes, witnesses, and omega invariants of monomial ideals}

\author[J. Miller,
M.  Nasernejad,
and \c{S}. O. Toh\v{a}neanu]{
Jacob Miller$^{1}$,
Mehrdad Nasernejad$^{2,3,*}$,
and \c{S}tefan O. Toh\v{a}neanu$^{1}$
}
\thanks{$^*$Corresponding author}

\subjclass[2010]{Primary: 13C15, 13F20, 13E05; Secondary: 05C25,  05E40.} \keywords{primary decomposition, associated prime, witness, omega invariant, irreducible decomposition, Alexander dual, v-number.\\
\indent Authors' email addresses:   jaco5247@vandals.uidaho.edu; 
m$\_$nasernejad@yahoo.com;
tohaneanu@uidaho.edu.}

\begin{abstract}
We introduce and study the omega invariant of a proper ideal in a Noetherian commutative ring, defined as the number of associated primes of the ideal. Our main objective is to investigate this invariant for monomial ideals and their powers. We characterize associated primes through monomial witnesses and provide an algorithmic procedure for constructing such witnesses from the exponent vectors of the minimal generators. These results lead to explicit formulas and bounds for the omega invariant without requiring the computation of a primary decomposition. We further establish alternative descriptions using irreducible decompositions and Alexander duality. A matrix-based approach is developed to detect associated primes of powers of monomial ideals directly from the exponent matrix of the original ideal. We also investigate the behavior of witnesses under passage from $I^n$ to $I^{n+1}$ and derive corresponding results for edge ideals of graphs. 
\end{abstract}

\date{}
\maketitle

\begin{center}
{\it
$^{1}$Department of Mathematics and Statistical Science, University of Idaho, Moscow, ID 83844 \\
$^{2}$Univ. Artois, UR 2462, Laboratoire de Math\'{e}matique de  Lens (LML),   F-62300 Lens, France \\
$^{3}$ Universit\'e  Caen Normandie, ENSICAEN, CNRS, Normandie Univ, GREYC UMR  6072, F-14000 Caen,  France
}
\end{center}

\section{Introduction}

Every proper ideal $I$ in a Noetherian commutative ring $R$ can be written as a finite intersection of primary ideals, called a primary decomposition of $I$. This theorem is due to Emanuel Lasker (1905 -- proved the result for polynomial rings and rings of formal power series) and Emmy Noether (1921 -- proved the result for any Noetherian ring), and it has been at the core of the modern development of commutative algebra (formerly known as Ideal Theory) and Algebraic Geometry. Primary ideals are generalization of irreducible ideals, and Noether proved the above theorem by showing that any proper ideal in a Noetherian ring is the intersection of finitely many irreducible ideals.

A proper ideal $I$ in a Noetherian ring $R$ can have multiple primary decompositions. Since the intersection of primary ideals is again a primary ideal, one can think of the intersection of the primary ideals the same radical ideal as just one primary ideal. Also, one can eliminate primary ideals in a decomposition that contain the intersection of the others. This way, one obtains what is commonly known as a {\em minimal} or {\em irredundant} primary decomposition of $I$. This comes with some uniqueness properties, one of them being the number of primary ideals in such a minimal primary decomposition. Though some people refer to this number as the length of the minimal primary decomposition, for reasons explained below, we call it {\em the omega invariant of $I$}, denoted $\omega(I)$.

The primary goals of these notes are to produce results and estimates of this algebraic invariant without computing the entire minimal primary decomposition, focusing on the case of monomial ideals.

The above notation and terminology are borrowed from number theory, where if $z$ is an integer, $\omega(z)$ is the number of distinct prime factors of $z$. The study of $\omega(z)$ goes back to Hardy and Ramanujan, and maybe the most familiar result is that $\omega(z)\sim\ln\ln z$. Since the theory of ideals in commutative rings was developed by Gauss as the generalization of $\mathbb Z$, to our surprise we found almost no results addressing $\omega(I)$ specifically, though somewhere in the background, we believe that the majority of manuscripts in commutative algebra or algebraic geometry are computing it in silence. More upfront, the most visible mentions of this invariant is the omega invariant of defining ideals of finite sets of points when it equals the number of points, or the omega invariant of edge ideals when it equals the number of minimal vertex covers of the graph. 
A sporadic result is \cite[Proposition 10.4]{Sw} where it is pointed out the striking difference between $\omega$ of the ideal generated by the determinants of all $r\times r$ submatrices of a matrix of indeterminates, which is a prime ideal, versus the $\omega$ of the ideal generated by the permanents of all $r\times r$ submatrices of the same matrix; in this case $\omega$ is polynomial in the size of the matrix, and if the matrix has more than three columns and more than three rows, we also have exactly one embedded component. Another interesting result occurs in \cite{Br} and says that $\omega(I^n)$ is constant not depending on $n$, for $n$ sufficiently large.

The primary ideals occurring in a minimal primary decomposition\footnote{These ideals are called {\em primary components}.} have distinct radical ideals, which are prime ideals, called {\em associated primes}. A proper ideal $I$ in a Noetherian ring $R$ has a unique set of associated primes, and $\omega(I)$ equals the cardinality of this set. So understanding $\omega(I)$ is the same as deciding which prime ideals can be associated primes of $I$. But we have a fundamental theorem that says that a prime ideal $P\subset R$ is associated prime of $I$ if and only if there exists an element $a\in R$ such that $(I:a)=P$. This element $a$ is called {\em a witness} of $I$. So the path for achieving our declared goals is clear, and reflected in the chosen title: finding the witnesses of $I$ will lead to characterizing the possible prime ideals that are associated to $I$, and from this to counting their number, hence $\omega(I)$.

\medskip

The structure of the paper is as follows. In Section \ref{sec_preliminaries} we discuss various preliminaries, including definitions of several different omega-invariants associated to the same ideal. As Remark \ref{rem_iomega_vs_omega} points out, these invariants can be quite different for the same ideal. Also, we will review a couple of well-known results regarding associated primes of ideals in general (Proposition \ref{prop_assoc}).

Results concerning witnesses, associated primes, or omega invariants for ideals in general are very hard to come by. So, very quickly we focus our attention towards these concepts specific to monomial ideals. We begin Section \ref{sec_monomial} by reviewing some of the most important properties of monomial ideals that make them so desirable to be the subject of study for any type of research. Immediately we observe that any associated prime of a monomial ideal must be generated by a subset of the variables! From this, we obtain almost with no effort an upper-bound for the omega invariant of any monomial ideal in terms of the supports of the monomial generators of that ideal (see Proposition \ref{prop_omega_monomial}); the Example \ref{exm_omega_mon} shows that the upper-bound can be strict, so we are still, long way to finding a formula.

In Subsection \ref{sec_witness} we turn our attention towards witnesses of monomial ideals. A first result we obtain is that if $P\in{\rm Ass}(R/I)$, where $I$ is a monomial ideal of $R=\mathbb K[x_1,\ldots,x_k]$, then one can obtain a witness of $I$ corresponding to $P$, by simply taking a linear combination with {\bf generic} coefficients of the minimal generators of $(I:P)$ (see, Proposition \ref{prop_witness_general_monomial}). But this result is not useful to continue our quest to find $\omega(I)$, and to our aid comes Proposition \ref{prop_omega_monomial} saying that all witnesses of a monomial ideal can be taken to be monomials. This turns out to be the essential result, because when coupled with the properties listed at the beginning of the section (especially the colon operation), one can lead to explicit listing of witnesses of monomial ideals. This is the case for Theorem \ref{thm_M_assoc} where it is constructed an explicit witness corresponding to the maximal ideal, or Theorem \ref{thm_witness_irreducible}, where explicit witnesses are constructed corresponding to associated primes produced by irreducible decompositions of the monomial ideal. Inspired by these results we prove one of the main results which is Theorem \ref{thm_witness_any_monomial} that constructs in an algorithmic way the witnesses corresponding to each of the associated primes of a monomial ideal, from the exponent vectors of the minimal monomial generators of the monomial ideal $I$. The nature of this result helps us produce a formula for the omega invariant of any monomial ideal (see, Corollary \ref{cor_omega_any_monomial}). In Example \ref{exm_assoc} we attempt to present the algorithmic way of finding all the associated primes/witnesses via the means of Theorem \ref{thm_witness_any_monomial}. Later, in Section \ref{Matrix_Method}, Definition \ref{defn_witness_exp}, Theorem \ref{thm_witness_any_monomial}, Corollary \ref{cor_omega_any_monomial}, and also Example \ref{exm_assoc} are generalized to any power $n\geq 1$ of the monomial ideal $I$, leading to the development of new concepts linked to witnesses of $I^n$.

There aren't many results (if any) that shed light on a minimal primary decomposition of a monomial ideal, but there are quite a few  that exhibits explicitly the minimal irreducible decomposition of the monomial ideal. Sections \ref{sec_quotientl} and \ref{sec_msm} are mainly present for completeness of the exposition of the paper, as we are focusing on the Alexander dual method in Section \ref{sec_Alex_dual}. A classical result (Theorem \ref{thm_Alex_dual}) shows that the minimal generators of the Alexander dual of a monomial ideal correspond to the irreducible components in the minimal irreducible decomposition of the monomial ideal. With this and the advantage of Theorem \ref{thm_witness_irreducible} that allows constructing witnesses from the irreducible components, we can produce a formula for the omega invariant of a monomial ideal by  means of the generators of the Alexander dual (see, Corollary \ref{cor_omega_ADual}). Same Theorem \ref{thm_Alex_dual} says that the omega invariant of a monomial ideal equals the number of minimal monomial generators of different supports of its Alexander dual. In Proposition \ref{prop_mu_support} we present a formula that computes the number of minimal monomial generators of different supports of any monomial ideal $I$ from the beginning of the linear strand in the graded free resolution of an immediate monomial extension of $I$. We end the Alexander dual method by obtaining a result that provides an upper-bound for the v-number of any monomial ideal.\footnote{The v-number of a non-prime homogeneous proper ideal is the smallest degree of a witness of that ideal.}

Section \ref{sec_Veronese} is the last part of Section \ref{sec_monomial}, and here we do a brief case study for a common class of monomial ideals, the Veronese type ideals. Veronese type ideals are special cases of ideals generated by fold products of linear forms, and so, by applying \cite[Theorem 2.2]{BuToXi}, one obtains a primary decomposition for these ideals, which turns out that it is minimal. The formula for the omega invariant of Veronese type ideals obtained in Theorem \ref{thm_Veronese} is an immediate application of this observation. By Lemma \ref{lem_Veronese} we can express the Alexander dual of a Veronese type ideal also as a Veronese type ideal, and we use this to express the irreducible omega for any power of a linear prime (i.e., a prime ideal generated by linear forms); see Proposition \ref{prop_iomega_veronese}.

\medskip

In Section \ref{Matrix_Method} we generalize Definition \ref{defn_witness_exp}, Theorem \ref{thm_witness_any_monomial}, Corollary \ref{cor_omega_any_monomial}, and also Example \ref{exm_assoc} to any power $n\geq 1$ of the monomial ideal $I$. The complicated route to finding associated primes, witnesses, and even omega invariants would be to find $G(I^n)$, and apply the results from Section \ref{sec_monomial}. We avoid this challenging method, by looking at the exponent matrix just of $I$; this matrix has columns the exponent vectors of the elements of $G(I)$. In Definition \ref{def:compatible_witness}, for an index set of $m$ variables, $S$, we say that a collection of $m$ (exponent) vectors is {\em $S$-dominant and compatible} if from it one can construct a candidate for a witness of $I^n$ corresponding to the prime ideal generated by the variables indexed by $S$. Actually, to make sure we can construct such a witness, one needs the extra condition that a certain integer system of inequalities, called {\em $S$-escape system}, {\bf does not have a solution}; see, Theorem \ref{thm:matrix_power_witness}. We apply this result in two examples, first recovering a result in \cite{NKA} and second revisiting an example in \cite{CNQ}: 
\begin{itemize}
    \item[-] In Example \ref{Example-1}, we show that the (irrelevant) maximal ideal is an associated prime for any power greater than 2 of powers of the cover ideal of an odd cycle graph.
    \item[-] In Example \ref{Example-2}, we show that $\langle x,y,z\rangle$ is an associated prime of $\langle x^{2m+1}z, x^my^4, x^{m+1}y^2, y^{2m+1}z\rangle^s, m\geq 2$, for any $s\geq m$.
\end{itemize}
Similarly to Corollary \ref{cor_omega_any_monomial}, in Corollary \ref{cor:omega_power_lower_bound} and, under a different formulation, in Corollary \ref{cor:empty_escape_omega}, we obtain a lower bound on $\omega(I^n)$, based on the number of subsets $S\subseteq[k]$ for which Theorem \ref{thm:matrix_power_witness} works.

In Section \ref{sec:matrix_extension}, we investigate conditions and consequences of when a witness of $I^n$ can be naturally extended to a witness of $I^{n+1}$.\footnote{``Naturally extended'' means to add a $1$ in the appropriate place to the exponent vector of $I^n$.} Theorem \ref{thm:matrix_extension} provides such conditions.

In general, if $I$ is a proper ideal of a Noetherian ring $R$, one always has a short exact sequence of $R$-modules

$$0\longrightarrow I^n/I^{n+1}\longrightarrow R/I^{n+1}\longrightarrow R/I^n\longrightarrow 0.$$ And by \cite[Exercise 9.42]{Sh}, one has

$${\rm Ass}(I^n/I^{n+1})\subseteq {\rm Ass}(R/I^{n+1})\subseteq {\rm Ass}(R/I^{n+1})\cup {\rm Ass}(I^n/I^{n+1}).$$ Hence, $$\omega(I^{n+1})\leq \omega(I^n)+\epsilon.$$ In Corollary \ref{cor:omega_monotonicity} we show the other inequality, $\omega(I^{n+1})\geq \omega(I^n)$, under some specific extension conditions. In Proposition \ref{prop:edge_extension} and Theorem \ref{thm:edge_ideal_persistence}, we translate these conditions for the case of edge ideals of simple graphs. As we mention in Remarks \ref{rem:edge_ideal_caution} and  \ref{rem:edge_extension_interpretation}, even for edge ideals these extension conditions don't come for free, and  their translation to conditions on the incidence matrix of the graph remains as a future project.

We end the manuscript with an Appendix where we present the lines of Macaulay2 code for determining witnesses based on Theorem \ref{thm_witness_any_monomial}.

\section{Preliminaries}\label{sec_preliminaries}

We start with some classical definitions and concepts. For these and other related results, our basic resource is \cite{Sh}. Also, for any positive integer $w$, throughout the notes we will use the classical notation $\{1,\ldots,w\}=[w]$.

An ideal $Q$ of a commutative ring $R$  is called {\em primary} if $Q$ is proper and, whenever $a,b\in R$ with $ab\in Q$ but $a\notin Q$, then $b\in \sqrt{Q}$. If $Q$ is primary, then $P:=\sqrt{Q}$ is a prime ideal of $R$, and we say that {\em $Q$ is a $P$-primary ideal}. Also, whenever $P'$ is a prime ideal with $Q\subseteq P'$, then $P\subseteq P'$.

Let $R$ be a commutative Noetherian ring and let $I$ be a proper ideal of $R$. Then, $I$ has a minimal primary decomposition $I=Q_1\cap\cdots\cap Q_n$ (see, \cite[Corollary 4.35]{Sh}), meaning that
\begin{itemize}
  \item[(a)] For all $i=1,\ldots,n$, $Q_i$ is a $P_i$-primary ideal.
  \item[(b)] $P_1,\ldots, P_n$ are all distinct.
  \item[(c)] For all $j=1,\ldots,n$ we have $$Q_j\nsupseteq (Q_1\cap\cdots\cap Q_{j-1}\cap Q_{j+1}\cap\cdots\cap Q_n).$$
\end{itemize}

The prime ideals in part (b) are called {\em the associated primes of $I$}, and we denote ${\rm Ass}(R/I)=\{P_1,\ldots,P_n\}$. The prime ideals in part (b) that are minimal under inclusion are the minimal primes of $I$ (see \cite[Proposition 4.24]{Sh}), and their set is denoted ${\rm Min}(I)$.

A minimal primary decomposition comes with some uniqueness properties:

\begin{itemize}
  \item[(i)] (\cite[Corollary 4.18]{Sh}) The set of associated primes is unique; hence the length $n$ is unique.
  \item[(ii)] (\cite[Theorem 4.29]{Sh}) The set of $Q_i$ $P_i$-primary ideals in a minimal primary decomposition of $I$, where $P_i\in{\rm Min}(I)$, is unique.
\end{itemize}

\medskip

An ideal $I$ of a commutative ring $R$ is called {\em irreducible} if it is a proper ideal and, whenever $I=I_1\cap I_2$ with $I_1, I_2$ ideals of $R$, then $I=I_1$ or $I=I_2$. By \cite[Proposition 4.34]{Sh}, in a commutative Noetherian ring, an irreducible ideal is primary. The converse is not true: for example, in $\mathbb K[x,y]$, the ideal $\langle x^2,y\rangle\cap \langle x,y^2\rangle$ is primary, but not irreducible.

By \cite[Proposition 4.33]{Sh}, every proper ideal in a commutative Noetherian ring can be expressed as a finite intersection of irreducible ideals. An irreducible decomposition is called {\em minimal} if no factors in the decomposition can be removed to obtain an irreducible decomposition of the same ideal. If $$I=I_1\cap\cdots\cap I_s=J_1\cap\cdots \cap J_t,$$ are two minimal irreducible decompositions for the proper ideal $I$ in a Noetherian ring, then $s=t$ and up to a reordering of the factors, $\sqrt{I_i}=\sqrt{J_i}$, for all $i=1,\ldots,s$, see, \cite[Exercise 7.19]{AtMa}.

\medskip

All of the above suggest to define the following invariants of a proper ideal $I$ in a commutative Noetherian ring $R$:

\begin{itemize}
  \item {\em the omega invariant}, $\omega(I)$, to be the length of a minimal primary decomposition of $I$.
  \item {\em the irreducible-omega invariant}, ${\rm i}\omega(I)$, to be the length of a minimal irreducible decomposition of $I$.
  \item {\em the weak-omega invariant}, ${\rm w}\omega(I)$, the number of minimal primes of $I$.
\end{itemize}

From all of the above, one has $${\rm w}\omega(I)\leq \omega(I)\leq {\rm i}\omega(I).$$

Note that for a square-free monomial ideal $I$, since any minimal irreducible decomposition is also a minimal primary decomposition, and since we also have ${\rm Min}(I)={\rm Ass}(R/I)$, one has $${\rm w}\omega(I)=\omega(I)={\rm i}\omega(I).$$ In this case, any primary component is a prime ideal generated by variables indexed by a minimal vertex cover of an associated simplicial complex (see \cite[Theorem 1.7]{MiSt}).

\begin{rem}\label{rem_iomega_vs_omega} The omega and irreducible-omega invariants can be quite different (see Proposition \ref{prop_iomega_veronese}). In general, if $Q$ is a $P$-primary ideal in a commutative Noetherian ring, obviously $\omega(Q)=1$, but as \cite[Exercises 8.29 and 8.30]{Sh} show $${\rm i}\omega(Q)=\dim_{R_P/PR_P}((QR_P:PR_P)/QR_P),$$ where $R_P$ is the usual notation for the localization of $R$ at the prime ideal $P$.

The weak-omega invariant can be quite different from the other omega invariants, as well. Take the following $I\subset\mathbb K[x_1,\ldots,x_k]$, $\mathbb K$ being a field of characteristic zero,

$$I=\langle x_1\rangle\cap \langle x_1,x_2\rangle^2\cap \langle x_1,x_2,x_3\rangle^3\cap\cdots\cap\langle x_1,\ldots,x_k\rangle^{k}.$$ Obviously, $\sqrt{I}=\langle x_1\rangle$, so ${\rm Min}(I)=\{\langle x_1\rangle\}$, and consequently, ${\rm w}\omega(I)=1$.

Any power of an ideal generated by a subset of variables is a primary ideal, and since the primary decomposition presented is minimal, $\omega(I)=k$. As we mentioned earlier, in Proposition \ref{prop_iomega_veronese} we present the complicated formula for the irreducible-omega for each primary component. Maybe some irreducible components are common to different primary components of $I$, but nonetheless, $\displaystyle {\rm i}\omega(I)\geq {\rm i}\omega(\langle x_1,\ldots,x_k\rangle^k)=\sum_{i=0}^{k-2}(-1)^i{{k}\choose{i}}{{k^2-i(k+1)}\choose{k-1}}$, after removing the zero terms in the formula in Proposition \ref{prop_iomega_veronese}.
\end{rem}

\subsection{Associated primes of ideals}\label{sec_assoc_ideals}

The most essential result is \cite[Proposition 8.22]{Sh}:

\begin{thm}\label{thm_essential} Let $R$ be a commutative Noetherian ring and let $I\subset R$ be a proper ideal. Let $P$ be a prime ideal of $R$. Then, $P\in{\rm Ass}(R/I)$ if and only if there exists $a\in R$ such that $(I:a)=P$.
\end{thm}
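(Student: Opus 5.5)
The plan is to work directly with the definition of ${\rm Ass}(R/I)$ as the set of radicals of the components in a minimal primary decomposition $I=Q_1\cap\cdots\cap Q_n$, $Q_i$ being $P_i$-primary. I would prove the two directions separately, using only elementary facts about primary ideals and Noetherianity.

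For ($\Rightarrow$), suppose $P=P_j$. The first step is to pick $b\in \bigcap_{i\neq j}Q_i$ with $b\notin Q_j$; this is possible by condition (c). The second step is the basic observation that $(Q_j:b)$ is $P_j$-primary whenever $b\notin Q_j$, while $(Q_i:b)=R$ for $i\neq j$. Hence $(I:b)=\bigcap_i(Q_i:b)=(Q_j:b)$, which is $P$-primary. Next, since $R$ is Noetherian, $P$ is finitely generated, so some power satisfies $P^m\subseteq (Q_j:b)$. I would then choose the least $m$ with $P^m b\subseteq Q_j$, noting $m\ge 1$ because $b\notin Q_j$. Then I pick $c\in P^{m-1}$ with $cb\notin Q_j$ and set $a=cb$. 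We still have $a\in\bigcap_{i\ne j}Q_i$, so $(I:a)=(Q_j:a)$, and this colon ideal is proper and contained in $P$ by primariness. Moreover $Pa\subseteq P^mb\subseteq Q_j$, so $(I:a)\supseteq P$, and equality follows.

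For ($\Leftarrow$), suppose $(I:a)=P$. Then $a\notin I$, and
\[
P=(I:a)=\bigcap_{i}(Q_i:a)=\bigcap_{i:\,a\notin Q_i}(Q_i:a),
\]
which is a finite intersection of $P_i$-primary ideals. Taking radicals, and using that the radical of an intersection is the intersection of the radicals, gives $P=\bigcap_{i:\,a\notin Q_i}P_i$. Since $P$ is prime and contains this finite intersection of primes, it contains some $P_i$ with $a\notin Q_i$. Conversely, $P$ is contained in each such $P_i$, so $P=P_i$ for that index, and therefore $P\in{\rm Ass}(R/I)$.

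I expect the main obstacle to be the forward direction. The natural element $b$ only yields a $P$-primary colon ideal, not $P$ itself, so the minimal-power trick is essential. It relies on the Noetherian hypothesis to get $P^m\subseteq (Q_j:b)$, since $P$ finitely generated together with $P=\sqrt{(Q_j:b)}$ gives this containment. The remaining steps are routine verifications of the colon and primary-ideal properties. Alternatively, one could simply cite \cite[Proposition 8.22]{Sh}, as the paper does.
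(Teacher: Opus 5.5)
Your proof is correct, and it is the standard textbook argument. In the forward direction you use the minimal-power trick ($P^{m}b\subseteq Q_j$ but $P^{m-1}b\not\subseteq Q_j$, then take $a=cb$); in the converse you take radicals of the $P_i$-primary ideals $(Q_i:a)$. This is exactly the argument of Atiyah--Macdonald's Proposition 7.17, and it is what the paper relies on by citing \cite[Proposition 8.22]{Sh} instead of proving the statement itself.
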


\begin{cor}\label{cor_useful} Let $R$ be a commutative Noetherian ring and let $I\subset R$ be a proper ideal. Let $P\in{\rm Ass}(R/I)$. Then $(I:P)\neq I$.
\end{cor}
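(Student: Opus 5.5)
By Theorem \ref{thm_essential}, since $P\in{\rm Ass}(R/I)$ there is an element $a\in R$ with $(I:a)=P$. The plan is to show that this same $a$ lies in $(I:P)$ but not in $I$, which gives $(I:P)\neq I$.

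First, $a\in (I:P)$. This means $aP\subseteq I$. Take any $p\in P=(I:a)$; by definition of the colon ideal, $pa\in I$. Hence $aP\subseteq I$.

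Second, $a\notin I$. Suppose instead that $a\in I$. Then $ra\in I$ for every $r\in R$, so $(I:a)=R$. But $(I:a)=P$ and $P$ is prime, hence proper, so $P\neq R$. This is a contradiction.

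Together with the trivial inclusion $I\subseteq (I:P)$, these two steps show that $(I:P)$ strictly contains $I$, so $(I:P)\neq I$. There is no real obstacle: the whole content lies in Theorem \ref{thm_essential}. The only point to state explicitly is that a prime ideal is proper by definition, which is what rules out $a\in I$.
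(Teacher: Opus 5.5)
Your proof is correct and follows the paper's argument exactly: take a witness $a$ with $(I:a)=P$ from Theorem \ref{thm_essential}, observe $aP\subseteq I$ so $a\in(I:P)$, and note $a\notin I$ because otherwise $(I:a)=R\neq P$. Combined with $I\subseteq(I:P)$, this is precisely how the paper concludes $(I:P)\neq I$.
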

\begin{proof} Let $P\in{\rm Ass}(R/I)$, by Theorem \ref{thm_essential}, $(I:a)=P$, for some $a\in R$. Obviously, $a\notin I$, since otherwise, $1$ would belong to $P$. Also, $a\alpha\in I$ for every $\alpha\in P$, and so $a\in (I:P)$. Since $I\subset (I:P)$, we obtain that $(I:P)\neq I$.
\end{proof}

Next, we will take a closer look at the associated primes $P$ of any ideal $I$, trying to avoid the search for those elements $a\in R$ with $(I:a)=P$. First, we have a useful lemma.

\begin{lem}\label{lem_colon} Let $I$ and $J$ be two ideals in a commutative Noetherian ring $R$. Then,
$$(I:J)=(I:(I:(I:J))).$$ Furthermore, if $(I:(I:J))=L$, then $(I:(I:L))=L$ as well.
\end{lem}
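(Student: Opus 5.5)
The plan is to treat the colon as an order-reversing Galois connection on ideals of $R$ and derive both claims from two elementary facts. (No Noetherian hypothesis is needed.) Recall that $(I:J)=\{r\in R: rJ\subseteq I\}$.

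First, the map $J\mapsto (I:J)$ is inclusion-reversing: if $J_1\subseteq J_2$, then $(I:J_2)\subseteq (I:J_1)$, since any $r$ with $rJ_2\subseteq I$ also satisfies $rJ_1\subseteq I$. Second, we have the extensivity property $J\subseteq (I:(I:J))$. Indeed, for $j\in J$ and $r\in (I:J)$ we have $rj\in I$, so $j\cdot (I:J)\subseteq I$, i.e. $j\in (I:(I:J))$.

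For the identity $(I:J)=(I:(I:(I:J)))$ we prove two inclusions.

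For "$\subseteq$", apply the extensivity property with $J$ replaced by $(I:J)$. This gives $(I:J)\subseteq (I:(I:(I:J)))$.

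For "$\supseteq$", start from the extensivity property $J\subseteq (I:(I:J))$ and apply the order-reversing property. This gives $(I:(I:(I:J)))\subseteq (I:J)$.

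For the "furthermore" part, set $L=(I:(I:J))$. The first part gives $(I:L)=(I:(I:(I:J)))=(I:J)$. Hence
$$(I:(I:L))=(I:(I:J))=L.$$
Alternatively, one can apply the identity with $J$ replaced by $(I:J)$, which gives the same result.

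There is no real obstacle here. The only point requiring care is keeping track of which instance of the extensivity property to use, and making sure the order-reversal is applied in the correct direction.
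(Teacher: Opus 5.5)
Your proof is correct and is essentially the paper's argument. The paper also gets $(I:J)\subseteq (I:(I:(I:J)))$ from extensivity applied to $K=(I:J)$, gets the reverse inclusion from $J\subseteq (I:K)$ plus order reversal, and proves the second claim via $(I:L)=(I:J)$; your remark that the Noetherian hypothesis is unnecessary is accurate.
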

\begin{proof} Denote $K:=(I:J)$. We need to prove that $K=(I:(I:K))$.

To show $K\subseteq (I:(I:K))$, consider $g\in K$ and $f\in (I:K)$, both arbitrary. Then, $fg\in I$. So, $g\cdot(I:K)\subseteq I$, hence $g\in (I:(I:K))$.

For the other inclusion, we do the following. Let $h\in J$ be arbitrary. Since $K=(I:J)$, then for any $g\in K$, we have $hg\in I$. So $h\cdot K\subseteq I$, leading to $h\in I:K$. Therefore, $J\subseteq (I:K)$.

Since for any ideals $K_1,K_2,K_3$ with $K_1\subseteq K_2$ one has $(K_3:K_2)\subseteq (K_3:K_1)$, from above we obtain

$$(I:(I:K))\subseteq (I:J)=K.$$

So, the equality is proved.

\medskip

If $(I:(I:J))=L$, then $(I:L)=(I:(I:(I:J)))=(I:J)$, and hence $(I:(I:L))=L$.
\end{proof}

The above general lemma proves the following well-known result in the literature.

\begin{prop}\label{prop_assoc} Let $R$ be a commutative Noetherian ring, and let $I$ be a proper ideal of $R$. Let $P$ be a prime ideal of $R$. 

Then, $P\in{\rm Ass}(R/I)$ if and only if $P=(I:(I:P))$.
\end{prop}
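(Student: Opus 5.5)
Both directions will use Theorem \ref{thm_essential} together with Lemma \ref{lem_colon}.

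For ($\Rightarrow$), suppose $P\in{\rm Ass}(R/I)$. By Theorem \ref{thm_essential} there is $a\in R$ with $(I:a)=P$, that is, $P=(I:J)$ with $J=\langle a\rangle$, since $(I:a)=(I:\langle a\rangle)$. The first part of Lemma \ref{lem_colon} then gives
$$P=(I:J)=(I:(I:(I:J)))=(I:(I:P)).$$
This direction is immediate.

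For ($\Leftarrow$), assume $P=(I:(I:P))$. Since $P$ is prime it is proper, so $(I:P)\not\subseteq I$; otherwise $(I:P)=I$ and $(I:(I:P))=(I:I)=R\neq P$. The goal is to produce a single element $a$ with $(I:a)=P$. Write
$$(I:(I:P))=\bigcap_{b\in(I:P)}(I:b),$$
which is a standard identity for colon ideals. Each $(I:b)$ with $b\in(I:P)$ contains $P$. Since $R$ is Noetherian, $(I:P)$ is finitely generated, say by $b_1,\dots,b_r$, and then $P=\bigcap_{i=1}^r(I:b_i)$. A prime ideal that equals a finite intersection of ideals must equal one of them. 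Indeed, if $P\neq(I:b_i)$ for every $i$, pick $c_i\in(I:b_i)\setminus P$. Then $\prod_i c_i$ lies in every $(I:b_i)$, hence in $P$, yet it is not in $P$ because $P$ is prime. This is the prime avoidance argument for intersections. Thus $P=(I:b_i)$ for some $i$, and Theorem \ref{thm_essential} gives $P\in{\rm Ass}(R/I)$.

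The main obstacle is the reverse direction, specifically reducing the intersection over all $b\in(I:P)$ to a finite one. Finite generation of $(I:P)$ handles this: for $b=\sum r_jb_j$ one has $(I:b)\supseteq\bigcap_j(I:b_j)$, so the intersection over all of $(I:P)$ equals the intersection over the generators. The rest is a routine verification.
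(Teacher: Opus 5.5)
Your proposal is correct and follows essentially the same route as the paper. The forward direction applies Theorem~\ref{thm_essential} and then Lemma~\ref{lem_colon}. The reverse direction writes $(I:(I:P))=\bigcap_i (I:b_i)$ over a finite generating set of $(I:P)$ and uses the fact that a prime equal to a finite intersection of ideals equals one of them; you prove that last fact directly, where the paper cites \cite[Corollary 3.56]{Sh}.
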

\begin{proof} Let $P$ be a prime ideal with $P\in{\rm Ass}(R/I)$. By Theorem \ref{thm_essential}, $P=(I:g)$ for some $g\in R$. Hence, by Lemma \ref{lem_colon}, $P=(I:(I:P))$.

\medskip

Conversely, let $P$ be a prime ideal of $R$ with $(I:(I:P))=P$. Suppose $(I:P)$ is minimally generated by $h_1,\ldots,h_m$. Then,

$$P=(I:(I:P))=(I:\langle h_1,\ldots,h_m\rangle)=\bigcap_{i=1}^m(I:h_i).$$

Since $P$ is a prime ideal, by \cite[Corollary 3.56]{Sh}, $P=(I:h_{i_0})$, for some $i_0\in [m]$. Then, by Theorem \ref{thm_essential}, $P\in{\rm Ass}(R/I)$.
\end{proof}

\section{Witnesses and omega invariants of monomial ideals}\label{sec_monomial}

Let $R:=\mathbb K[x_1,\ldots,x_k]$ and let $I$ be a monomial ideal. The goal of this section is to make estimates on the omega invariants of monomial ideals in terms of classical invariants of these ideals. The following are some basic properties that are at the core of any method of computing a primary decomposition of a monomial ideal.

\begin{itemize}
    \item[(i)] If $I=\langle f_1,\ldots,f_s\rangle$ and $J=\langle g_1,\ldots,g_t\rangle$ are two monomial ideals, then $I\cap J=\langle \{{\rm lcm}(f_i,g_j)|1\leq i\leq s, 1\leq j\leq t\}\rangle$.
    \item[(ii)] If $I=\langle f_1,\ldots,f_s\rangle$ and if $g\in R$ is a monomial, then
    $$(I:g)=\left\langle\frac{f_1}{\gcd(f_1,g)},\ldots, \frac{f_s}{\gcd(f_s,g)}\right\rangle.$$
    \item[(iii)] If $I=\langle fg\rangle+J$ is a monomial ideal with $fg$ a minimal generator of $I$ with $\gcd(f,g)=1$, then $I=(\langle f\rangle +J)\cap(\langle g\rangle+J)$.
    \item[(iv)] For a monomial $M$ in $R:=\mathbb K[x_1,\ldots,x_k]$, define {\em the support of $M$} as ${\rm supp}(M)=\{i\,\mid \, x_i|M\}$, and for a monomial ideal, $I$, define ${\rm supp}(I)$ to be the union of the supports of the minimal monomial generators of $I$. With this notation, a monomial ideal is primary iff its radical is generated by $x_j$, for all $j\in {\rm supp}(I)$.
    \item[(v)] By \cite[Theorem 5.27]{MiSt}, every monomial ideal has a minimal irreducible decomposition with monomial components, and therefore it has a minimal primary decomposition with monomial components.
\end{itemize}

Properties (iv) and (v) above imply that any associated prime of a monomial ideal is of the form $\langle x_{i_1},\ldots,x_{i_m}\rangle$.

\medskip

As an immediate corollary to Proposition \ref{prop_assoc}, we have the following upper-bound for the omega invariant of a monomial ideal.

\begin{prop}\label{prop_omega_monomial} Let $I$ be a monomial ideal in $R:=\mathbb K[x_1,\ldots,x_k]$. Suppose $I$ is minimally generated by monomials $f_1,\ldots,f_s$. Let $\Lambda_I$ be the set of subsets $\{i_1,\ldots, i_u\}$ of $\{1,\ldots,k\}$ such that $(I:\langle x_{i_1},\ldots,x_{i_u}\rangle)\neq I$. Then,

$$\omega(I)\leq|\{A\in\Lambda_I\,|\, A\cap {\rm supp}(f_j)\neq \emptyset \mbox{ for all } j=1,\ldots,s\}|.$$    
\end{prop}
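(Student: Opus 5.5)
The plan is to build an injective map from ${\rm Ass}(R/I)$ into the set on the right-hand side. Since $\omega(I)=|{\rm Ass}(R/I)|$, this gives the inequality directly.

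\emph{Step 1: associated primes are generated by variables.} Let $P\in{\rm Ass}(R/I)$. By properties (iv) and (v), $P=\langle x_{i_1},\ldots,x_{i_u}\rangle$ for some subset $A=\{i_1,\ldots,i_u\}\subseteq[k]$. This identifies $P$ with a subset $A_P$ of $[k]$. Different primes generated by variables have different index sets, so the assignment $P\mapsto A_P$ is injective. It remains to show that $A_P$ lies in the restricted subset of $\Lambda_I$.

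\emph{Step 2: $A_P\in\Lambda_I$.} This follows from Corollary \ref{cor_useful}, which gives $(I:P)\neq I$. That is exactly the defining condition for $A_P$ to belong to $\Lambda_I$.

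\emph{Step 3: $A_P$ meets every ${\rm supp}(f_j)$.} Since $I\subseteq (I:a)=P$ for a witness $a$ (or simply because $P$ contains the associated primary component, which contains $I$), each generator satisfies $f_j\in P$. A monomial lies in an ideal generated by variables exactly when one of those variables divides it. So some $x_{i}$ with $i\in A_P$ divides $f_j$, i.e. $A_P\cap{\rm supp}(f_j)\neq\emptyset$, for every $j=1,\ldots,s$.

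Combining the three steps, $P\mapsto A_P$ is an injection from ${\rm Ass}(R/I)$ into $\{A\in\Lambda_I\mid A\cap{\rm supp}(f_j)\neq\emptyset \text{ for all } j\}$, and the bound follows. I expect no real obstacle. The only point needing care is Step 3: the fact that a monomial lies in a monomial ideal iff it is divisible by one of its generators, applied to $P$.
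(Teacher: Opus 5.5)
Your proposal is correct and follows the paper's proof: you identify each associated prime with the index set of its variables, use Corollary~\ref{cor_useful} to get membership in $\Lambda_I$, and use $f_j\in P$ to get the support condition. The only differences are minor. The paper deduces the support condition from primality of $P$, where you use the monomial-membership criterion, and you spell out the injectivity of $P\mapsto A_P$, which the paper leaves implicit.
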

\begin{proof} Let $\frak p\in{\rm Ass}(R/I)$. Then, $\frak p=\langle x_{i_1},\ldots,x_{i_m}\rangle$ and $I\subseteq \frak p$. Therefore, $f_j\in\frak p$, for all $j=1,\ldots,s$. Since $\frak p$ is prime, then $\{i_1,\ldots,i_m\}\cap{\rm supp}(f_j)\neq\emptyset$, for all $j=1,\ldots,s$.
Also, note that by Corollary \ref{cor_useful},
we have $\{i_1,\ldots,i_m\}\in\Lambda_I$.
\end{proof}

\begin{exm} \label{exm_omega_mon} The inequality in Proposition \ref{prop_omega_monomial} can be strict. Consider the following  monomial ideal
\[
I=\langle x_1^3,x_1^2x_2^2,x_1^2x_3^2\rangle \subset R=K[x_1,x_2,x_3].
\]
Then we  have the minimal primary decomposition
\[
I=\langle x_1^2\rangle\cap\langle x_1^3,x_2^2,x_3^2\rangle.
\]
Hence, $\omega(I)=2.$ The minimal generators have supports
\[
\operatorname{supp}(x_1^3)=\{1\},\qquad
\operatorname{supp}(x_1^2x_2^2)=\{1,2\},\qquad
\operatorname{supp}(x_1^2x_3^2)=\{1,3\}.
\]
With \cite{GrSt} we can check that the subsets satisfying
\[
A\cap\operatorname{supp}(f_j)\neq\varnothing
\qquad\text{for all }j=1,2,3
\]
and belonging to $\Lambda_I$ are
\[
\{1\},\quad \{1,2\},\quad \{1,3\},\quad \{1,2,3\}.
\]
Thus $
\left|
\left\{
A\in\Lambda_I:
A\cap\operatorname{supp}(f_j)\neq\varnothing
\text{ for all }j
\right\}
\right|=4.
$
Consequently, we get $\omega(I)=2<4.$
\end{exm}





\medskip

\subsection{Witnesses.} \label{sec_witness} The element $a$ in Theorem \ref{thm_essential} is called {\em a witness} of the associated prime $P$ of $I$. Since our goal is to count the different associated primes of an ideal $I$, it is of utmost importance to have an idea what are the possible witnesses.

For a monomial ideal, the results in this regard are a bit easier to come about. First, we present a simpler theoretical result.

\begin{prop}\label{prop_witness_general_monomial}
    Let $P$ be an associated prime of the monomial ideal $I\subset R:=\mathbb K[x_1,\ldots,x_k]$, where $\mathbb K$ is a field of characteristic zero. Suppose $(I:P)$ is generated by monomials $h_1,\ldots,h_t$. Let $h:=c_1h_1+\cdots+c_th_t$, where $c_1,\ldots,c_t$ are random (generic) elements of $\mathbb K$. Then $(I:h)=P$.
\end{prop}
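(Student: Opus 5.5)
The plan is to avoid manipulating $(I:h)$ directly, since for a non-monomial $h$ this colon need not be a monomial ideal. Instead I would pass to the module $N:=(I:P)/I$ and use a torsion argument over the domain $R/P$.

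First, the easy inclusion: each $h_j\in(I:P)$, so $h\in(I:P)$ and $hP\subseteq I$, which gives $P\subseteq(I:h)$. For the reverse inclusion, note that $PN=0$, so $N$ is a finitely generated module over the domain $D:=R/P$. For any $g\in(I:P)$ with image $\bar g\in N$, we have $(I:g)/P={\rm Ann}_D(\bar g)$. Hence $(I:g)=P$ exactly when $\bar g$ is nonzero and has zero annihilator in $D$, that is, when $\bar g\notin T$, where $T$ is the torsion submodule of $N$ over $D$. This uses that $T$ is a submodule because $D$ is a domain.

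Next I would show that some generator $h_{i_0}$ satisfies $\bar h_{i_0}\notin T$. Since $P\in{\rm Ass}(R/I)$, Proposition \ref{prop_assoc} gives $P=(I:(I:P))=\bigcap_{j=1}^t (I:h_j)$. Primality of $P$ then yields $P=(I:h_{i_0})$ for some $i_0$, exactly as in the proof of Proposition \ref{prop_assoc}, and by the criterion above this means $\bar h_{i_0}\notin T$.

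For the genericity step, consider the $\mathbb K$-linear map $\varphi:\mathbb K^t\to N/T$ given by $(c_1,\ldots,c_t)\mapsto \sum c_j\bar h_j \bmod T$. It is nonzero because $\varphi(e_{i_0})\neq 0$, so $\ker\varphi$ is a proper linear subspace of $\mathbb K^t$. Since $\mathbb K$ has characteristic zero, it is infinite, and a generic choice of $(c_1,\ldots,c_t)$ lies outside $\ker\varphi$. For such a choice $\bar h\notin T$; since $N/T$ is torsion-free over $D$, this gives ${\rm Ann}_D(\bar h)=0$, i.e. $(I:h)=P$.

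The only delicate point is making sure that the set of bad coefficient vectors is a proper Zariski-closed set, and here it is a proper linear subspace. That relies on $T$ being a $D$-submodule, hence a $\mathbb K$-subspace, which holds because $R/P$ is a domain. Monomiality of $I$ enters only in guaranteeing that $(I:P)$ has the monomial generators $h_1,\ldots,h_t$, so the argument actually works for any ideal and any finite generating set of $(I:P)$.
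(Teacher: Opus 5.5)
Your proof is correct, and it takes a genuinely different route from the paper's.

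\textbf{The paper's route.} The paper works elementwise. It takes a hypothetical $g\in(I:h)\setminus P$ (called $N$ there) and asserts that, for generic $c_j$, no term of $c_igh_i$ cancels a term of $c_jgh_j$. From monomiality of $I$ it deduces that every $gh_j\in I$, and concludes $g\in(I:(I:P))=P$.

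\textbf{Your route.} You regard $(I:P)/I$ as a module over the domain $R/P$. You identify the $g\in(I:P)$ with $(I:g)=P$ as exactly those whose image is not torsion. Genericity then reduces to avoiding the kernel of a nonzero $\mathbb K$-linear map $\mathbb K^t\to((I:P)/I)/T$. Proposition~\ref{prop_assoc} is used only to produce the non-torsion generator $h_{i_0}$.

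\textbf{What your approach buys.} First, the bad set of coefficients is an explicit proper linear subspace, fixed before any test element is considered. In the paper's sketch the no-cancellation claim is automatic only for a monomial (or a fixed) test element. But $(I:h)$ is not a monomial ideal, and the test element may depend on $c$. For instance, take $I=\langle x,y\rangle^2$, $P=\langle x,y\rangle$, $h_1=x$, $h_2=y$ and the element $1+c_2y-c_1x\notin P$. The $xy$-terms of $c_1(1+c_2y-c_1x)x$ and $c_2(1+c_2y-c_1x)y$ cancel for every nonzero $c_1,c_2$. So genericity alone does not justify that step as written, and your torsion argument is what makes it rigorous. Second, you get the generality you point out: any ideal, any finite generating set of $(I:P)$, and any infinite field $\mathbb K$ (characteristic zero is only used for infiniteness).

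\textbf{What the paper's approach buys.} It stays elementary and entirely within monomial combinatorics.

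\textbf{A sharpening available to you.} Monomiality can be fed into your framework. The module $(I:P)/I$ is $\mathbb Z^k$-graded over the graded domain $R/P$, so its torsion submodule $T$ is graded. The $\bar h_j$ are homogeneous of distinct multidegrees, so $\bar h\in T$ would force $c_{i_0}\bar h_{i_0}\in T$. Hence any choice with $c_{i_0}\neq 0$ already gives $(I:h)=P$; in particular this holds whenever all $c_j\neq 0$, with no genericity needed.
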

\begin{proof}
Obviously, $P\subseteq (I:h)$, as $h_iP\subset I$, for all $i=1,\ldots,t$. 

For the other inclusion, suppose there exists $N$ be an element of $(I:h)\setminus P$. Then, $Nh=c_1Nh_1+\cdots+c_tNh_t$ is in the monomial ideal $I$. But since $c_1,\ldots,c_t$ are taken to be generic, no monomial term of $c_iNh_i$ will cancel a monomial term of $c_jNH_j$, for $i\neq j$. Since $I$ is a monomial ideal,  it follows that  for all $i=1,\ldots, t$, every monomial term of $Nh_i$ is in $I$, and hence, $Nh_1,\ldots,Nh_t$ are all in $I$. So $N\in(I:\langle h_1,\ldots,h_t\rangle)=(I:(I:P))$. But, by Proposition \ref{prop_assoc}, since $P$ is an associated prime, $(I:(I:P))=P$; a contradiction with $N\notin P$. So $(I:h)\subseteq P$.
\end{proof}

By \cite[Corollary 1.3.10]{HeHi}, the witness of an associated prime of a monomial ideal can be a monomial itself.

\begin{prop}\label{prop_assoc_monom} If $I$ is a proper monomial ideal of $R$, and $P\in{\rm Ass}(R/I)$, then there exists a monomial $M\in R$ such that $P=(I:M)$.
\end{prop}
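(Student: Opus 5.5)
Since the paper already has Proposition \ref{prop_assoc}, the natural route goes through it rather than through a multigraded module-theoretic argument. Let $P\in{\rm Ass}(R/I)$; by properties (iv) and (v), $P=\langle x_{i_1},\ldots,x_{i_m}\rangle$ is a monomial prime. The first step is to observe that $(I:P)$ is again a monomial ideal. This follows because $(I:P)=\bigcap_{j}(I:x_{i_j})$. Each $(I:x_{i_j})$ is monomial by property (ii), and intersections of monomial ideals are monomial by property (i). So $(I:P)$ is minimally generated by monomials $h_1,\ldots,h_t$.

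Next, by Proposition \ref{prop_assoc}, $P=(I:(I:P))=(I:\langle h_1,\ldots,h_t\rangle)=\bigcap_{i=1}^t (I:h_i)$. Each $(I:h_i)$ is an ideal containing $P$: indeed $h_iP\subseteq I$, so $P\subseteq (I:h_i)$. Now $P$ is prime and equals a finite intersection of ideals. The prime avoidance for intersections (\cite[Corollary 3.56]{Sh}, used exactly as in the converse part of Proposition \ref{prop_assoc}) says $P\supseteq (I:h_{i_0})$ for some $i_0$. Combined with $P\subseteq (I:h_{i_0})$, we get $P=(I:h_{i_0})$. Take $M:=h_{i_0}$, which is a monomial.

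The only step needing care is justifying that a prime containing a finite intersection of ideals contains one of them. If $P\not\supseteq J_i$ for every $i$, pick $a_i\in J_i\setminus P$; then $\prod a_i\in\bigcap J_i=P$, contradicting primality. Since the intersection actually equals $P$, equality with one factor follows as above. I expect no real obstacle. The point that could be overlooked is confirming that $(I:P)$ is generated by monomials, which is what makes the chosen witness a monomial. As an alternative, one could cite \cite[Corollary 1.3.10]{HeHi} directly, which uses the $\mathbb Z^k$-grading of $R/I$ to say that associated primes of graded modules are annihilators of homogeneous elements.
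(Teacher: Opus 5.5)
Your proof is correct, but it is not the paper's route. The paper gives no argument of its own here; it simply quotes \cite[Corollary 1.3.10]{HeHi}, the result you mention only as an alternative.

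Your main argument observes that the converse half of the proof of Proposition~\ref{prop_assoc} already produces a monomial witness, provided the generators of $(I:P)$ are chosen to be monomials. This choice is legitimate: since $P$ is generated by variables, $(I:P)=\bigcap_j (I:x_{i_j})$ is monomial by properties (i) and (ii). Then $P=\bigcap_i (I:h_i)$, together with the primality of $P$ and $P\subseteq (I:h_i)$, forces $P=(I:h_{i_0})$.

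What your route buys is a self-contained, elementary proof inside the paper's own framework. It also gives a slightly sharper statement: a monomial witness can always be found among the minimal monomial generators of $(I:P)$. In particular, the generic linear combination of Proposition~\ref{prop_witness_general_monomial} is not needed merely to produce a witness.

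The cost is that you must already know $P$ is generated by variables. You import this from (iv)--(v), i.e.\ from the existence of a monomial primary decomposition. The citation outsources everything instead. The usual multigraded argument behind results of this kind is also more general: it applies to any $\mathbb{Z}^k$-graded module and yields both the monomiality of the associated primes and a monomial witness at once.
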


\vskip .2in

Proposition \ref{prop_assoc_monom} and property (ii) above are leading to a method of finding witnesses of monomial ideals of various associated primes. Suppose $I\subset R:=\mathbb K[x_1,\ldots,x_k]$ is a monomial ideal minimally generated by monomials $f_1,\ldots,f_s$, and that $P=\langle x_{i_1},\ldots,x_{i_m}\rangle\in {\rm Ass}(R/I)$. Then, we have $$\langle x_{i_1},\ldots,x_{i_m}\rangle=(\langle f_1,\ldots,f_s\rangle:M)=\left\langle\frac{f_1}{\gcd(f_1,M)},\ldots, \frac{f_s}{\gcd(f_s,M)}\right\rangle.$$

\subsubsection{The maximal ideal as an associated prime.} We will denote $\frak m:=\langle x_1,\ldots,x_k\rangle$ the maximal ideal of $R:=\mathbb K[x_1,\ldots,x_k]$. For a monomial $f\in R$, denote $d_i(f):=\deg_{x_i}(f)$ to be the exponent of the variable $x_i$ in the monomial $f$. Also, the (unique) set of minimal monomial generators of a monomial ideal $I$ is often denoted with $G(I)$.

\cite[Theorem 2.7 and proof of Theorem 2.3]{NaRa} gives the complete characterization of the case when $\frak m$ is the maximal ideal (or any embedded associated primes).        

\begin{thm}\label{thm_M_assoc} {\rm (\cite[Theorem 2.7, proof of Theorem 2.3]{NaRa})} Let $I\subset R$ be a monomial ideal with minimal generating set of monomials $$G(I)=\{\underbrace{x_1^{r_{1,1}}\cdots x_k^{r_{1,k}}}_{f_1},\ldots,\underbrace{x_1^{r_{s,1}}\cdots x_k^{r_{s,k}}}_{f_s}\}, s\geq k.$$ For each $j=1,\ldots,k$, let $C_j:=\{i_t\mid d_j(f_{i_t})=\max\{d_j(f_{i_1}),\ldots,d_j(f_{i_k})\}\}$. 

Then, $\frak{m}\in {\rm Ass}(R/I)$ if and only if there exist $i_1,\ldots,i_k\in [s]$ with the following properties:
\begin{itemize}
\item[(1)] $|C_j|=1$, for all $j=1,\ldots,k$.
\item[(2)] $C_i\cap C_j=\emptyset$, for any $i\neq j$.
\item[(3)] For any $j\in [s]\setminus\{i_1,\ldots,i_k\}$, $f_j\nmid x_1^{d_1(f_{i_1})-1}\cdots x_k^{d_k(f_{i_k})-1}$.
\end{itemize}

Furthermore, the monomial $x_1^{d_1(f_{i_1})-1}\cdots x_k^{d_k(f_{i_k})-1}$ is a witness of $\frak m$.    
\end{thm}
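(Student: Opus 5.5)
The plan is to reduce everything to a monomial witness via Proposition \ref{prop_assoc_monom} and then read off the conditions from the colon formula (ii).

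\emph{Forward direction.} Suppose $\frak m\in{\rm Ass}(R/I)$. By Proposition \ref{prop_assoc_monom} there is a monomial $M=x_1^{a_1}\cdots x_k^{a_k}$ with $(I:M)=\frak m$. Then $M\notin I$ and $x_jM\in I$ for every $j$. Fix $j$. Since $x_jM\in I$, some generator $f_{i_j}$ divides $x_jM$. Because $f_{i_j}\nmid M$, the only possible failure of divisibility is in the $x_j$-coordinate, so $d_j(f_{i_j})=a_j+1$ and $d_l(f_{i_j})\le a_l$ for all $l\neq j$.

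These inequalities give the three conditions at once.
\begin{itemize}
\item[(a)] For $l\neq j$ we have $d_j(f_{i_l})\le a_j<a_j+1=d_j(f_{i_j})$. Hence among $f_{i_1},\ldots,f_{i_k}$ the maximum $x_j$-degree is attained only at index $i_j$, so $C_j=\{i_j\}$. This is condition (1).
\item[(b)] If $i_j=i_l$ for some $l\neq j$, then $d_j(f_{i_j})=a_j+1$ contradicts $d_j(f_{i_l})\le a_j$. So the indices $i_1,\ldots,i_k$ are distinct, which gives (2).
\item[(c)] Since $a_j=d_j(f_{i_j})-1$, the witness $M$ is exactly $x_1^{d_1(f_{i_1})-1}\cdots x_k^{d_k(f_{i_k})-1}$. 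As $M\notin I$, no generator divides it, and in particular no $f_j$ with $j\notin\{i_1,\ldots,i_k\}$ does. This is (3), and it also yields the ``furthermore'' claim.
\end{itemize}

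\emph{Backward direction.} Assume $i_1,\ldots,i_k$ satisfy (1)--(3) and set $M=x_1^{d_1(f_{i_1})-1}\cdots x_k^{d_k(f_{i_k})-1}$, with $C_j=\{i_j\}$ after relabeling. This relabeling is forced by (1) and (2): the sets $C_j$ are disjoint singletons inside the $k$-element set $\{i_1,\ldots,i_k\}$, so they form a bijection, and I reindex so that the unique maximizer of $d_j$ is $i_j$. The exponent $d_j(f_{i_j})-1\ge 0$ is legitimate: if it were negative, every $f_{i_l}$ would have $d_j=0$, contradicting $|C_j|=1$ when $k\ge2$.

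Next I check $x_jM\in I$. For $l\neq j$, strict maximality gives $d_l(f_{i_j})\le d_l(f_{i_l})-1$, and the $x_j$-exponent of $x_jM$ equals $d_j(f_{i_j})$. Hence $f_{i_j}\mid x_jM$. Thus $\frak m\subseteq(I:M)$.

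It remains to show $M\notin I$. No $f_{i_j}$ divides $M$, since its $x_j$-degree exceeds that of $M$, and (3) excludes all other generators. So $(I:M)$ is proper, and therefore $(I:M)=\frak m$, showing $\frak m\in{\rm Ass}(R/I)$ by Theorem \ref{thm_essential}.

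The main obstacle is the bookkeeping between the sets $C_j$ and the indices: one must correctly interpret conditions (1) and (2) as producing the bijection $j\mapsto i_j$, and handle the degenerate cases $k=1$ and vanishing exponents. For $k=1$, $I$ is principal, so $\frak m$ is associated only trivially, and this case I will check directly.
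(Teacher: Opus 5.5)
Your proof follows essentially the paper's route. The paper treats this statement as the case $m=k$ of Theorem~\ref{thm_witness_any_monomial}(b), whose proof, like yours, starts from a monomial witness $M$ (Proposition~\ref{prop_assoc_monom}). It reads off $d_j(f_{i_j})=d_j(M)+1$ and $d_l(f_{i_j})\le d_l(M)$ for $l\neq j$ from $f_{i_j}\mid x_jM$ and $f_{i_j}\nmid M$. Conversely, it checks that $x_1^{d_1(f_{i_1})-1}\cdots x_k^{d_k(f_{i_k})-1}$ has colon exactly $\mathfrak m$. Your forward direction is correct, and so is your verification that $x_jM\in I$ and $M\notin I$.

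The one step you should remove is the ``relabeling'' in the backward direction. Conditions (1) and (2) do give a bijection $j\mapsto C_j$ onto $\{i_1,\ldots,i_k\}$, but you cannot reindex to make it the identity. Condition (3) and the claimed witness are tied to the given indexing. Reindexing raises the $x_j$-exponent of the monomial from $d_j(f_{i_j})-1$ to $\max_t d_j(f_{i_t})-1$, so (3) for the old monomial says nothing about the new one.

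Under that literal reading the statement is actually false. Take $I=\langle x^2y,\,y^2z,\,xz^2,\,xyz\rangle\subset\mathbb K[x,y,z]$ and $(f_{i_1},f_{i_2},f_{i_3})=(xz^2,x^2y,y^2z)$. Then $C_1=\{i_2\}$, $C_2=\{i_3\}$, $C_3=\{i_1\}$ are disjoint singletons. The monomial in (3) is $1$, which $xyz$ does not divide. Yet $I=\langle x,y^2\rangle\cap\langle y,z^2\rangle\cap\langle x^2,z\rangle$, so $\mathfrak m\notin{\rm Ass}(R/I)$; your reindexed $M$ would be $xyz\in I$.

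Hence (1)--(2) must be read as $C_j=\{i_j\}$, i.e.\ $d_j(f_{i_j})>d_j(f_{i_l})$ for all $l\neq j$. This is exactly what your forward direction produces and what condition (i) of Theorem~\ref{thm_witness_any_monomial} expresses. With that reading no reindexing is needed and your argument is complete. This includes $k=1$, where $I=\langle x_1^r\rangle$ with $r\ge 1$ and the witness is $x_1^{r-1}$.
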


\medskip

\subsubsection{Witnesses in irreducible decompositions.} As we will see later in the paper, handling minimal irreducible decompositions is much easier than minimal primary decompositions. This is because an irreducible component of a monomial ideal is generated by pure powers of a subset of the variables.

\cite[Theorem 3.1]{AmSe} presents  the following result regarding the witnesses of associated primes, with the caveat that one needs to compute the minimal irreducible decomposition of the monomial ideal.

\begin{thm}\label{thm_witness_irreducible} {\rm (\cite[Theorem 3.1]{AmSe})} Let $I\subset R=\mathbb K[x_1,\ldots,x_k]$ be a monomial ideal with minimal irreducible decomposition $\displaystyle I=\bigcap_{i=1}^rQ_i$. Let $P=\langle x_{i_1},\ldots,x_{i_m}\rangle\in {\rm Ass}(R/I)$, and let $\langle x_{i_1}^{a_1},\ldots,x_{i_m}^{a_m}\rangle$ be a $P$-primary component in the minimal irreducible decomposition above. Let $\{s_1,\ldots,s_{k-m}\}=[k]\setminus\{i_1,\ldots,i_m\}$ and for any $j=1,\ldots,k-m$, choose any integers $b_j\geq \max\{d_{s_j}(f)\mid f\in G(I)\}$.

Then, the monomial $$v:=x_{i_1}^{a_1-1}\cdots x_{i_m}^{a_m-1}x_{s_1}^{b_1}\cdots x_{s_{k-m}}^{b_{k-m}}$$ satisfies $(I:v)=P$.
\end{thm}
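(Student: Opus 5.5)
We compute $(I:v)$ using property (ii) and compare it with $P$ through the irreducible decomposition. Since $I=\bigcap_{i=1}^r Q_i$, we have $(I:v)=\bigcap_{i=1}^r (Q_i:v)$. Each $Q_i$ is generated by pure powers of variables, say $Q_i=\langle x_j^{c_{i,j}} \mid j\in T_i\rangle$. For such an ideal, $(Q_i:v)=\langle x_j^{\max(c_{i,j}-d_j(v),0)}\mid j\in T_i\rangle$. In particular $(Q_i:v)=R$ exactly when $v\in Q_i$, which happens when some $j\in T_i$ has $d_j(v)\geq c_{i,j}$. So the plan is to determine, component by component, which $Q_i$ survive the colon and what they become.

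For the distinguished component $Q=\langle x_{i_1}^{a_1},\ldots,x_{i_m}^{a_m}\rangle$, we have $d_{i_t}(v)=a_t-1$, so $(Q:v)=\langle x_{i_1},\ldots,x_{i_m}\rangle=P$. This gives $(I:v)\subseteq P$.

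For the reverse inclusion, fix any other component $Q_i$; we must show $P\subseteq (Q_i:v)$. There are two cases.

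\emph{Case 1: $T_i\not\subseteq\{i_1,\ldots,i_m\}$.} Then $T_i$ contains some $s_j$. Any exponent appearing in a minimal irreducible component is the $x_{s_j}$-degree of some minimal generator of $I$; this is standard (for instance, via the Alexander dual description, or because the component is a lcm-type corner). Hence $c_{i,s_j}\leq \max\{d_{s_j}(f)\mid f\in G(I)\}\leq b_j=d_{s_j}(v)$. So $v\in Q_i$ and $(Q_i:v)=R\supseteq P$.

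\emph{Case 2: $T_i\subseteq\{i_1,\ldots,i_m\}$.} If some $i_t\in T_i$ has $c_{i,i_t}\leq a_t-1$, then $v\in Q_i$ and we are done. Otherwise $c_{i,i_t}\geq a_t$ for all $i_t\in T_i$. Then $Q_i\supseteq$? No: larger exponents make $Q_i$ smaller on those variables. Compare instead $Q_i$ with $Q$ directly. Every generator $x_{i_t}^{c_{i,i_t}}$ of $Q_i$ is divisible by $x_{i_t}^{a_t}\in Q$, so $Q_i\subseteq Q$. Then $Q$ is redundant in the decomposition, since $\bigcap_{l\neq Q}Q_l\subseteq Q_i\subseteq Q$. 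This contradicts minimality.

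Together the two cases give $(I:v)=P\cap R\cap\cdots=P$.

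The main obstacle is justifying the bound on exponents in Case 1: that every exponent $c_{i,j}$ in a minimal irreducible component is at most $\max_{f\in G(I)} d_j(f)$. I would prove it as follows. Suppose $c_{i,j}>\max_f d_j(f)$, and replace $x_j^{c_{i,j}}$ in $Q_i$ by $x_j^{\max_f d_j(f)}$ (or delete $x_j$ if that max is $0$), giving a larger irreducible ideal $Q_i'$. Every $f\in G(I)$ lies in $Q_i$, and not via the $x_j$ generator since $d_j(f)<c_{i,j}$. Hence $f\in Q_i'$, so $I\subseteq Q_i'$. Then $I=Q_i'\cap\bigcap_{l\neq i}Q_l$ is an irreducible decomposition in which $Q_i$ has been enlarged. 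This contradicts minimality in the sense of \cite[Theorem 5.27]{MiSt}, where the irredundant irreducible decomposition of a monomial ideal is unique. I would cite that uniqueness rather than reprove it.
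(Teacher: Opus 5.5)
The paper gives no proof of this statement; it is quoted from \cite[Theorem 3.1]{AmSe}, so your argument has to stand on its own. Most of it does: the reduction $(I:v)=\bigcap_i(Q_i:v)$ and the computation $(Q:v)=P$ are correct, and Case~2 correctly derives a contradiction with irredundancy from $Q_i\subseteq Q$ (you should drop the abandoned ``$Q_i\supseteq$?'' aside). There is, however, a gap in your proof of the exponent bound on which Case~1 rests.

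Put $D_\ell:=\max\{d_\ell(f)\mid f\in G(I)\}$, and suppose $D_j\geq 1$. You obtain $Q_i'$ by lowering $x_j^{c_{i,j}}$ to $x_j^{D_j}$, show $I\subseteq Q_i'$, and then assert $I=Q_i'\cap\bigcap_{l\neq i}Q_l$. But $I\subseteq Q_i'$ only gives $I\subseteq Q_i'\cap\bigcap_{l\neq i}Q_l$. The reverse inclusion is the entire content of the claim, and it does not follow from $I\subseteq Q_i'$. For instance, $R$ also contains $I$, yet $\bigcap_{l\neq i}Q_l\neq I$ by irredundancy. (When $D_j=0$, deleting $x_j$ makes $Q_i'$ smaller, not larger. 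In that case the equality is automatic and your uniqueness argument does go through.)

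The missing ingredient is truncation. For a monomial $w$, let $\bar w$ be the monomial with $d_\ell(\bar w)=\min\{d_\ell(w),D_\ell\}$. Every $f\in G(I)$ has $d_\ell(f)\leq D_\ell$, so $f\mid w$ if and only if $f\mid\bar w$; hence $w\in I$ if and only if $\bar w\in I$.

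Now take a monomial $w\in Q_i'\cap\bigcap_{l\neq i}Q_l$ with $w\notin Q_i$. Then $x_j^{D_j}\mid w$. The monomial $wx_j^{c_{i,j}}$ lies in every $Q_l$, so it lies in $I$, and it has the same truncation as $w$. Therefore $w\in I$, and your appeal to uniqueness now completes the lemma.

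Alternatively, apply Theorem~\ref{thm_Alex_dual} with $\mathbf b=(D_1,\ldots,D_k)$. The components are $\mathfrak m^{\mathbf b\setminus\mathbf a}$ with $\mathbf a\preceq\mathbf b$, since $I^{[\mathbf b]}$ is an intersection of ideals generated by pure powers of exponent at most $D_\ell$. Their exponents $D_j+1-a_j$, with $a_j\geq 1$, are therefore at most $D_j$.

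Truncation also gives a shorter route that bypasses both cases and uniqueness. Write $Q=Q_q$. Irredundancy provides a monomial $w\in\bigcap_{l\neq q}Q_l\setminus Q$, so $d_{i_t}(w)\leq a_t-1$ for every $t$. Then $u:={\rm lcm}(w,x_{i_t}^{a_t})\in I$, and $\bar u$ divides $x_{i_t}v$, so $x_{i_t}v\in I$ for each $t$.
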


\medskip

\subsubsection{The general case.}\label{sec_general_witness} Theorems \ref{thm_M_assoc} and \ref{thm_witness_irreducible} are the result of deeper understanding of the following analysis.

\begin{thm} \label{thm_witness_any_monomial}Let $I\subset R=\mathbb K[x_1,\ldots,x_k]$ be a monomial ideal with $G(I)=\{f_1,\ldots,f_s\}$. Then, 
\begin{itemize}
\item[(a)] $P:=\langle x_{i_1}\rangle$ is an associated prime of $I$ if and only if $x_{i_1}\mid f_j$, for all $j=1,\ldots,s$.

\item[(b)] $P:=\langle x_{i_1},\ldots,x_{i_m}\rangle, 2\leq m\leq k$ is an associated prime of $I$ if and only if there exists $m$ elements of $G(I)$ labeled $f_{i_1},\ldots,f_{i_m}$ satisfying the following two conditions:
\begin{itemize}
    \item[(i)] For $u=2,\ldots,m$, $$d_{i_u}(f_{i_u})\geq \max\{d_{i_u}(f_{i_v})+1\mid v\neq u, v\in[m]\}.$$
    \item[(ii)] For any $v\in[s]\setminus\{i_1,\ldots,i_m\}$ there exists $u(v)\in[m]$ such that $$d_{i_{u(v)}}(f_v)\geq d_{i_{u(v)}}(f_{i_{u(v)}}).$$    
\end{itemize}
\end{itemize}
Furthermore, in these conditions, for both cases (a) and (b), a monomial witness $M$ of $P$ can be constructed by taking $d_{i_u}(M)=d_{i_u}(f_{i_u})-1$, and $d_w(M)=\max\{d_w(f_{\ell})\mid\ell=1,\ldots,s\}$, for all $w\in[k]\setminus\{i_1,\ldots,i_m\}$.
\end{thm}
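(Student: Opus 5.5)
The plan is to reduce everything to the colon formula in property (ii) together with Proposition \ref{prop_assoc_monom}. By Proposition \ref{prop_assoc_monom}, $P=\langle x_{i_1},\ldots,x_{i_m}\rangle\in{\rm Ass}(R/I)$ if and only if there is a monomial $M$ with $(I:M)=P$. By (ii), $(I:M)$ is generated by the monomials $g_j:=f_j/\gcd(f_j,M)$, and
$$d_w(g_j)=\max\{0,\,d_w(f_j)-d_w(M)\}.$$
A monomial ideal equals $P$ exactly when two things hold:
\begin{itemize}
\item[($\alpha$)] for every $u\in[m]$ some $g_j$ equals $x_{i_u}$;
\item[($\beta$)] every $g_j$ lies in $P$, i.e.\ for every $j$ there is $u$ with $d_{i_u}(f_j)>d_{i_u}(M)$.
\end{itemize}
Both directions of the theorem will be translations of ($\alpha$) and ($\beta$) into exponent inequalities.

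\textbf{Sufficiency.} Let $M$ be the monomial in the statement. For $w\notin\{i_1,\ldots,i_m\}$ we have $d_w(M)\geq d_w(f_j)$ for all $j$, so these variables never appear in any $g_j$.
\begin{itemize}
\item For $f_{i_u}$: the $x_{i_u}$-exponent of $g_{i_u}$ is $d_{i_u}(f_{i_u})-(d_{i_u}(f_{i_u})-1)=1$. For $v\neq u$, condition (i) applied at index $v$ gives $d_{i_v}(f_{i_u})\leq d_{i_v}(f_{i_v})-1=d_{i_v}(M)$, so $x_{i_v}$ does not divide $g_{i_u}$. Hence $g_{i_u}=x_{i_u}$, which gives ($\alpha$).
\item For $v\notin\{i_1,\ldots,i_m\}$: condition (ii) gives $d_{i_{u(v)}}(f_v)\geq d_{i_{u(v)}}(f_{i_{u(v)}})>d_{i_{u(v)}}(M)$, so $x_{i_{u(v)}}\mid g_v$, which gives ($\beta$).
\end{itemize}
For case (a), $m=1$: if $x_{i_1}$ divides every $f_j$, choose $f_{i_1}$ to be a generator of minimal $x_{i_1}$-degree. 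Then (ii) holds automatically and the same computation applies. Conversely, condition (ii) with $m=1$ forces $d_{i_1}(f_v)\geq d_{i_1}(f_{i_1})\geq 1$ for all $v$.

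\textbf{Necessity.} Start from a monomial witness $M$ given by Proposition \ref{prop_assoc_monom}. By ($\alpha$), for each $u$ pick $f_{i_u}$ with $g_{i_u}=x_{i_u}$. These are distinct generators because their quotients are distinct. The equality $g_{i_u}=x_{i_u}$ means two things:
\begin{itemize}
\item $d_{i_u}(M)=d_{i_u}(f_{i_u})-1$;
\item $d_w(f_{i_u})\leq d_w(M)$ for every $w\neq i_u$.
\end{itemize}
In particular, for $v\neq u$ we get $d_{i_u}(f_{i_v})\leq d_{i_u}(M)=d_{i_u}(f_{i_u})-1$, which is (i). For any other generator $f_v$, ($\beta$) supplies $u$ with $d_{i_u}(f_v)>d_{i_u}(M)=d_{i_u}(f_{i_u})-1$, which is (ii). The witness $M$ constructed in the statement may differ from the original $M$ outside $P$. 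That is harmless, since raising those exponents to the maximum changes no $g_j$ in the $P$-variables and kills the others, as in the sufficiency step.

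The main obstacle is a bookkeeping point in condition (i), which is stated only for $u=2,\ldots,m$. The sufficiency computation for $g_{i_v}$ with $v\geq 2$ needs $d_{i_1}(f_{i_v})<d_{i_1}(f_{i_1})$, which is exactly (i) at $u=1$. Otherwise $x_{i_1}$ could survive in $g_{i_v}$. The necessity argument produces (i) for all $u\in[m]$. I will therefore prove the statement with (i) read for every $u\in[m]$, which is what the argument naturally gives, and note that the case $u=1$ is not implied by the others.
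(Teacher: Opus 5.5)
Your proof is correct and follows essentially the paper's argument: take a monomial witness from Proposition~\ref{prop_assoc_monom}, read the colon formula (ii) coordinatewise to get conditions (i) and (ii), and use the same witness $M$ for sufficiency. Your reading of condition (i) for every $u\in[m]$ is also what the paper's own converse uses, so the ``$u=2$'' in the statement is a typo: for $I=\langle xz,xy\rangle$ and $P=\langle x,y\rangle$ the literal conditions hold with $f_{i_1}=xz$, $f_{i_2}=xy$, yet $(I:z)=\langle x\rangle$ and $P\notin{\rm Ass}(R/I)$.
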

\begin{proof} (a) If $\langle x_{i_1}\rangle\in {\rm Ass}(R/I)$, then $I\subseteq \langle x_{i_1}\rangle$, and so, $x_{i_1}\mid f_j$, for all $j=1,\ldots,s$.

Conversely, let $P=\langle x_{i_1}\rangle$ and suppose $x_{i_1}\mid f_j$ for all $j\in[s]$. After some reordering of the elements of $G(I)$, let $i_1\in[s]$ be such that $\delta:=d_{i_1}(f_{i_1})=\min\{d_i(f_{\ell})\mid \ell\in[s]\}$. Obviously $\delta\geq 1$. 

Let $\displaystyle M:=x_{i_1}^{\delta-1}\prod_{j\neq i_1}x_j^{\Delta_j}$, where, for $j\in[k]\setminus\{i_1\}$, $\Delta_j:=\max\{d_j(f_\ell)\mid\ell\in[s]\}$. Then,

$$(I:M)=\langle \{x_{i_1}^{d_{i_1}(f_{\ell})-\delta+1}\mid \ell\in[s]\}\rangle=\langle x_{i_1}\rangle=P.$$ So $P\in {\rm Ass}(R/I)$.

\medskip

(b) For convenience and without any loss of generality, in the proof below we will substitute $\{i_1,\ldots,i_m\}$ with $\{1,\ldots,m\}$.

\medskip

First, suppose $P=\langle x_1,\ldots,x_m\rangle\in{\rm Ass}(R/I)$.

Let $M\notin I$ be a monomial such that $(I:M)=P$, and, from property (ii) at the beginning of the section, suppose

$$f_i=x_i\gcd(f_i,M), \mbox{ for all } i=1,\ldots,m.$$

So, for all $i=1,\ldots,m$, we have $x_iM=\alpha_if_i$ with $\gcd(x_i,\alpha_i)=1$. Therefore, for all $i=1,\ldots,m$, $f_i=x_if_i'$ for some monomial $f_i'\in R$ and hence

$$M=\alpha_1f_1'=\alpha_2f_2'=\cdots=\alpha_mf_m'.$$

Since $x_1$ doesn't divide $\alpha_1$, then for all $j=2,\ldots,m$, $d_1(f_j')=d_1(f_j)\leq d_1(f_1')=d_1(f_1)-1$.

So we can conclude that for $i=1,\ldots,m$ we have condition (i) satisfied: $$d_i(f_i)-1\geq d_i(f_j), \mbox{ for all } j\in[m]\setminus\{i\}.$$ Therefore

$$M=x_1^{d_1(f_1)-1}\cdots x_m^{d_m(f_m)-1}N, \mbox{ where } N\in\mathbb K[x_{m+1},\ldots,x_k].$$

On the other hand, for each $i=1,\ldots,s$, we have the factorization $f_i=g_i\tilde{f}_i$, with $g_i\in\mathbb K[x_1,\ldots,x_m]$ and $\tilde{f}_i\in\mathbb K[x_{m+1},\ldots,x_k]$. And therefore, for each $i=1,\ldots,m$, $\tilde{f}_i\mid N$.


In order for $(I:M)=\langle x_1,\ldots,x_m\rangle$, we must have  for any $\ell=m+1,\ldots,s$, $${\rm supp}\left(\frac{f_{\ell}}{\gcd(f_{\ell},M)}\right)\cap\{1,\ldots,m\}\neq\emptyset.$$ Let $i_{\ell}$ be an element of the above intersection, and denote $N_{\ell}:=\gcd(f_{\ell},M)$.

We have $$f_{\ell}=\beta_{\ell}N_{\ell}x_{i_{\ell}}, f_{\ell}=\gamma_{\ell}N_{\ell}, M=\delta_{\ell}N_{\ell}, \mbox{ with } \beta_{\ell}, \gamma_{\ell}, \delta_{\ell} \mbox{ monomials, and } \gcd(\gamma_{\ell},\delta_{\ell})=1.$$

So $$\delta_{\ell}f_{\ell}=\beta_{\ell}x_{i_{\ell}}M \mbox{ with } \gcd(\delta_{\ell},\beta_{\ell}x_{i_{\ell}})=1.$$ But then, we obtain condition (ii): $$d_{i_{\ell}}(f_{\ell})\geq d_{i_{\ell}}(f_{i_{\ell}}).$$

\medskip

For the converse, suppose that $f_1,\ldots,f_m\in G(I)$ satisfy the two conditions
\begin{itemize}
    \item[(i)] For $i=1,\ldots,m$, $$d_{i}(f_{i})\geq \max\{d_{i}(f_{j})+1\mid j\neq i, j\in[m]\}.$$
    \item[(ii)] For any $\ell\in[s]\setminus[m]$ there exists $i_{\ell}\in[m]$ such that $$d_{i_{\ell}}(f_{\ell})\geq d_{i_{\ell}}(f_{i_{\ell}}).$$   
\end{itemize}

For $w=m+1,\ldots,k$, denote $D_w:=\max\{d_w(f_{\ell})\mid \ell=1,\ldots,s\}$, and consider the monomial

$$M:=x_1^{d_1(f_1)-1}\cdots x_m^{d_m(f_m)-1}x_{m+1}^{D_m}\cdots x_k^{D_k}.$$ We will show that $(I:M)=\langle x_1,\ldots,x_m\rangle$.

From condition (i), for any $i=1,\ldots,m$, $\displaystyle \gcd(f_i,M)={f_i}/{x_i}$. So for any $i=1,\ldots,m$, $f_i/\gcd(f_i,M)=x_i$.

From condition (ii), for any $\ell=m+1,\ldots,s$, since there is $i_{\ell}\in[m]$ with $d_{i_{\ell}}(f_{\ell})\geq d_{i_{\ell}}(f_{i_{\ell}})>d_{i_{\ell}}(M)\geq 1$, then $ x_{i_{\ell}}$ divides $\displaystyle\frac{f_{\ell}}{\gcd(f_{\ell},M)}$.

Therefore, $(I:M)=\langle x_1,\ldots,x_m\rangle$.
\end{proof}

\begin{defn}\label{defn_witness_exp} A $k$-tuple ${\bf e}:=(e_1,\ldots,e_k)$ of nonnegative integers is called {\em a witness exponent vector} of the monomial ideal $I\subset\mathbb K[x_1,\ldots,x_k]$, if $M_{\bf e}:=x_1^{e_1}\cdots x_k^{e_k}$ is the monomial witness constructed in Theorem \ref{thm_witness_any_monomial}; i.e., there exists $1\leq m\leq k$ and $\{f_{i_1},\ldots,f_{i_m}\}\subseteq G(I)=\{f_1,\ldots,f_s\}$, with
\begin{itemize}
    \item[(a)] For $u=1,\ldots,m$, $e_{i_u}=d_{i_u}(f_{i_u})-1$, and for $w\in [k]\setminus\{i_1,\ldots,i_m\}$, $e_w=\max\{d_w(f_{\ell})\mid \ell=1,\ldots,s\}$.
    \item[(b)] For $u=1,\ldots, m$, $e_{i_u}\geq\max\{d_{i_u}(f_{i_v})\mid v\neq u, v\in[m]\}$.
    \item[(c)] For any $v\in[s]\setminus\{i_1,\ldots,i_m\}$, there exists $u(v)\in[m]$ such that $d_{i_{u(v)}}(f_v)\geq e_{i_{u(v)}}+1$.
\end{itemize}
\end{defn}

With the above definition we have the following result calculating the omega invariant of any monomial ideal:

\begin{cor}\label{cor_omega_any_monomial} Let $I\subset R=\mathbb K[x_1,\ldots,x_k]$ be a monomial ideal. Then,

$$\omega(I)=|\{(I:M_{\bf e})\mid {\bf e} \mbox{ is a witness exponent vector of } I\}|.$$
    
\end{cor}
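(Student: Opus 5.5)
The plan is to show that the two sets ${\rm Ass}(R/I)$ and $\mathcal{W}:=\{(I:M_{\bf e})\mid {\bf e} \mbox{ a witness exponent vector of } I\}$ coincide. Then $\omega(I)=|{\rm Ass}(R/I)|$ gives the count. Both inclusions will come from Theorem \ref{thm_witness_any_monomial}, so the work is mostly bookkeeping between that theorem and Definition \ref{defn_witness_exp}.

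First, $\mathcal{W}\subseteq{\rm Ass}(R/I)$. Let ${\bf e}$ be a witness exponent vector, with data $m$ and $f_{i_1},\ldots,f_{i_m}$. Condition (b) of Definition \ref{defn_witness_exp}, read with $e_{i_u}=d_{i_u}(f_{i_u})-1$, is exactly condition (i) of Theorem \ref{thm_witness_any_monomial}(b). Condition (c) is exactly condition (ii). By the converse part of that theorem's proof, $(I:M_{\bf e})=\langle x_{i_1},\ldots,x_{i_m}\rangle$, which is prime. Theorem \ref{thm_essential} then shows it is associated.

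The case $m=1$ needs a separate check, since the proof of Theorem \ref{thm_witness_any_monomial}(a) chooses $f_{i_1}$ to minimize $d_{i_1}$. With $m=1$, condition (b) is vacuous. Condition (c) forces $d_{i_1}(f_v)\geq d_{i_1}(f_{i_1})$ for all $v$, so $f_{i_1}$ attains the minimum of $d_{i_1}$. The case also needs $e_{i_1}\geq0$, that is $x_{i_1}\mid f_{i_1}$; I take this as implicit in ${\bf e}$ having nonnegative entries. Hence every generator is divisible by $x_{i_1}$, and the computation in part (a) gives $(I:M_{\bf e})=\langle x_{i_1}\rangle$.

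Conversely, take $P=\langle x_{i_1},\ldots,x_{i_m}\rangle\in{\rm Ass}(R/I)$; by properties (iv) and (v) every associated prime has this form. If $m=1$, part (a) of the theorem supplies $f_{i_1}$ minimizing $d_{i_1}$. The resulting $M$ is $M_{\bf e}$ for a vector ${\bf e}$ satisfying (a)--(c), with (c) holding because of the minimality.

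If $m\geq2$, the forward direction of Theorem \ref{thm_witness_any_monomial}(b) gives generators $f_{i_1},\ldots,f_{i_m}$ satisfying (i) and (ii). Condition (i) is stated there for $u=2,\ldots,m$, but the proof establishes it for all $u$. Defining ${\bf e}$ by Definition \ref{defn_witness_exp}(a), conditions (b) and (c) follow, and the theorem gives $(I:M_{\bf e})=P$. So $P\in\mathcal{W}$.

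The step needing the most care is this forward direction. One has to make sure the generators produced from an arbitrary monomial witness $M$ really satisfy (i) for every $u$, including $u=1$. One also has to make sure the exponents of $M$ outside $P$ can be replaced by the maxima $D_w$ without changing the colon. That is why the argument relies on the converse construction of the theorem rather than on $M$ itself. Finally, distinct vectors ${\bf e}$ may give the same prime, so the count is of distinct colon ideals, exactly as the statement is phrased.
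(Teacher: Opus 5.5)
Your proposal is correct and follows the same route as the paper, which states this corollary without a separate proof, as an immediate consequence of both directions of Theorem~\ref{thm_witness_any_monomial} read through Definition~\ref{defn_witness_exp}. Your extra bookkeeping (condition (i) in fact holds for every $u$, including $u=1$, and the $m=1$ case is treated separately using $e_{i_1}\geq 0$) simply makes explicit what the paper leaves implicit.
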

\medskip

Next, we apply Theorem \ref{thm_witness_any_monomial} for an example. In Example \ref{Example-2} initially we will produce similar, yet more challenging trial-and-error computations to provide candidates for witnesses, but in a more general setup, and with a different, more encompassing approach.

\begin{exm}\label{exm_assoc}
Consider the ideal $$I=\langle x^3y^2z^3, x^2y^3z, xy^4, z^4\rangle\subset R:=\mathbb K[x,y,z].$$

First note that since $\sqrt{I}=\langle z,xy\rangle$, then every prime ideal containing $I$ must contain $z$ and must contain either $x$ or $y$. Hence the possible prime ideals containing $I$ include
\[
\langle x,z\rangle,\qquad
\langle y,z\rangle,\qquad
\langle x,y,z\rangle,
\]
but not
\[
\langle x\rangle,\qquad
\langle y\rangle,\qquad
\langle z\rangle,\qquad
\langle x,y\rangle.
\]

In the tables of exponents below, we label each row as the possible $f_{i_1},\ldots,f_{i_m}$ that first satisfy condition (i), and after we chose these we comment if condition (ii) is satisfied for the other rows (labeled here with $A$ and $B$).

\medskip

\noindent $\bullet$ Checking if $\langle x,z\rangle$ is an associated prime.

\medskip

{\em 1st try:}

\medskip

\begin{tabular}{c|c|c|c|l}
elements on $G(I)$ & $x$ & $y$ & $z$ & comments\\\hline
$f_x$ & 3 & 2 & 3 & $d_x(f_x)>d_x(f_z)$\\\hline
$A$ & 2 & 3 & 1 & {\bf not possible:} $d_x(A)<d_x(f_x)$ and $d_z(A)<d_z(f_z)$\\\hline
$B$ & 1 & 4 & 0 & \\\hline
$f_z$ & 0 & 0 & 4& only option: $d_z(f_z)>d_z(f_x)$    
\end{tabular}

\medskip

{\em 2nd try:}

\medskip

\begin{tabular}{c|c|c|c|l}
elements on $G(I)$ & $x$ & $y$ & $z$ & comments\\\hline
$A$ & 3 & 2 & 3 & \\\hline
$f_x$ & 2 & 3 & 1 & $d_x(f_x)>d_x(f_z)$\\\hline
$B$ & 1 & 4 & 0 & {\bf not possible:} $d_x(B)<d_x(f_x)$ and $d_z(B)<d_z(f_z)$\\\hline
$f_z$ & 0 & 0 & 4& \shortstack[l]{ only option: $d_z(f_z)>d_z(f_x)$, $d_x(A)>d_x(f_x)$,\\ $d_z(B)<d_z(f_x)$ }
\end{tabular}

\medskip

{\em 3rd try:}

\medskip

\begin{tabular}{c|c|c|c|l}
elements on $G(I)$ & $x$ & $y$ & $z$ & comments\\\hline
$A$ & 3 & 2 & 3 & $d_x(A)\geq d_x(f_x)$\\\hline
$B$ & 2 & 3 & 1 & $d_x(B)\geq d_x(f_x)$\\\hline
$f_x$ & 1 & 4 & 0 & only option: $d_x(f_x)>d_x(f_z)$\\\hline
$f_z$ & 0 & 0 & 4& only option: $d_z(f_z)>d_z(f_x)$    
\end{tabular}

\medskip

As we can see in the {\em 3rd try}, we can construct $M=x^{1-1}z^{4-1}y^{\max\{2,3,4,0\}}=y^4z^3$, obtaining $$(I:M)=\langle x^3,x^2,x,z\rangle=\langle x,z\rangle,$$ and consequently, $\langle x,z\rangle\in {\rm Ass}(R/I).$

\medskip

\noindent $\bullet$ Checking if $\langle y,z\rangle$ is an associated prime.

\medskip

{\em 1st try:}

\medskip

\begin{tabular}{c|c|c|c|l}
elements on $G(I)$ & $x$ & $y$ & $z$ & comments\\\hline
$f_y$ & 3 & 2 & 3 & $d_y(f_y)>d_y(f_z)$\\\hline
$A$ & 2 & 3 & 1 & $d_y(A)\geq d_y(f_y)$\\\hline
$B$ & 1 & 4 & 0 & $d_y(B)\geq d_y(f_y)$\\\hline
$f_z$ & 0 & 0 & 4& only option: $d_y(f_z)<d_y(f_y)$, $d_z(f_z)>d_z(f_y)$    
\end{tabular}

\medskip

As we can see in the {\em 1st try}, we can construct $M=y^{2-1}z^{4-1}x^{\max\{3,2,1,0\}}=x^3yz^3$, obtaining $$(I:M)=\langle y,y^2,y^3,z\rangle=\langle y,z\rangle,$$ and consequently, $\langle y,z\rangle\in {\rm Ass}(R/I)$.

\medskip

\noindent $\bullet$ Checking if $\langle x,y,z\rangle$ is an associated prime.

\medskip

{\em 1st try:}

\medskip

\begin{tabular}{c|c|c|c|l}
elements on $G(I)$ & $x$ & $y$ & $z$ & comments\\\hline
$f_x$ & 3 & 2 & 3 & $d_x(f_x)>d_x(f_y), d_x(f_z)$\\\hline
$f_y$ & 2 & 3 & 1 & $d_y(f_y)>d_y(f_x),d_y(f_z)$\\\hline
$A$ & 1 & 4 & 0 & $d_y(A)\geq d_y(f_y)$\\\hline
$f_z$ & 0 & 0 & 4& only option: $d_z(f_z)>d_z(f_x), d_z(f_y)$    
\end{tabular}

\medskip

As we can see in the {\em 1st try}, we can construct $M=x^{3-1}y^{3-1}z^{4-1}=x^2y^2z^3$, obtaining $$(I:M)=\langle x,y,y^2,z\rangle=\langle x,y,z\rangle,$$ and consequently, $\langle x,y,z\rangle\in {\rm Ass}(R/I)$. Of course, we could have applied Theorem \ref{thm_M_assoc}, which is a special case of Theorem \ref{thm_witness_any_monomial}.

To conclude the example, $$\boxed{{\rm Ass}(R/I)=\{\langle x,z\rangle, \langle y,z\rangle, \langle x,y,z\rangle\}.}$$
\end{exm}

\bigskip

Below we present well-known methods to obtain the irreducible decomposition of a monomial ideal. Quite immediately from these methods one can obtain the irreducible omega invariant. 

For a vector ${\bf a}=(a_1,\ldots,a_k)\in\mathbb N_0^{k}$ (called {\em exponent vector}), denote ${\bf x}^{\bf a}:=x_1^{a_1}\cdots x_k^{a_k}$ and $\frak m^{\bf a}:=\langle \{x_i^{a_i}~|~a_i\neq 0\}\rangle$.

\subsection{The quotient method}\label{sec_quotientl} This first method is the most natural one when it comes to obtaining a primary decomposition for monomial ideals. Because it is a recursive method, heavily dependent on the example at hand, we will not discuss a qualitative estimate for the omega invariant (or the irreducible omega invariant) of a monomial ideal in this situation. But for completeness, presenting this method is a must.

Let $I=\langle f_1,\ldots,f_s\rangle \subset {\mathbb K}[x_1,\ldots,x_k]$ be a monomial ideal minimally generated by $\{f_1,\ldots,f_s\}$. A primary decomposition of $I$ can be found by the following procedure (see \cite[Algorithm 3.28]{Va}):
\begin{itemize}
    \item If there is a simple power of each $x_i$ in $I$, then $I$ is primary.
    \item If $I$ is not primary, let $x_j^d$ be the largest power of $x_j$ such that $x_j\mid f_\ell$ for some $1\leq \ell\leq s$.  Then
    $$I=(I,x_j^d)\cap (I:x_j^d).$$
    \item Repeat with $(I,x_j^d)$ and $(I:x_j^d)$ to get a primary decomposition of $I$.
\end{itemize}

\begin{exm}\label{exm-quotient}
    Let $I=\langle x^2y, xy^3,x^2z,yz^2 \rangle \subset {\mathbb Q}[x,y,z]$. Starting with $x^2$, we have
    $$I=(I,x^2)\cap (I:x^2) = \langle x^2,xy^3,yz^2 \rangle \cap \langle y,z \rangle .$$
    Apply the algorithm again to the first ideal, using $y^3$:
    $$I=\langle x^2,y^3,yz^2\rangle \cap \langle x,z^2 \rangle \cap \langle y,z\rangle .$$
    Finally, apply again to the first ideal with $z^2$ to get a primary decomposition of $I$:
    $$I=\langle x^2,y^3,z^2\rangle \cap \langle x^2,y\rangle \cap \langle x,z^2 \rangle \cap \langle y,z\rangle.$$
\end{exm}

\medskip

\subsection{The Alexander dual method}\label{sec_Alex_dual}

Let ${\bf a}=(a_1,\ldots,a_k)$ and ${\bf b}=(b_1,\ldots,b_k)$ be two exponent vectors. We say that ${\bf a}\preceq {\bf b}$ if $a_i\leq b_i$, for all $i=1,\ldots,k$.

If ${\bf a}\preceq {\bf b}$, define ${\bf b}\setminus{\bf a}:=(c_1,\ldots,c_k)$ to be the exponent vector with the $i$-th entry
$$c_i:=\left\{
                          \begin{array}{ll}
                            b_i+1-a_i, & \hbox{if $a_i\geq 1$,} \\
                            0, & \hbox{if $a_i=0$.}
                          \end{array}
                        \right.$$

\medskip

Let $I:=\langle {\bf x}^{{\bf a}_1},\ldots, {\bf x}^{{\bf a}_s}\rangle$ be a monomial ideal minimally generated by $s$ monomials (i.e., $\mu(I)=s$). Let ${\bf b}$ be an exponent vector with ${\bf b}\succeq{\bf a}_j$, for all $j=1,\ldots,s$. {\em The Alexander dual of $I$ w.r.t. ${\bf b}$} is the ideal

$$I^{[{\bf b}]}:=\bigcap_{j=1}^s\frak m^{{\bf b}\setminus{\bf a}_j}.$$

Then, we have the following theorem that relates the irreducible omega invariant to the minimum number of generators of the Alexander dual.

\begin{thm}\label{thm_Alex_dual} (\cite[Theorem 5.27]{MiSt}) Let $I$ be a monomial ideal such that each of its minimal generators divide a monomial ${\bf x}^{\bf b}$, for some exponent vector ${\bf b}$. Then, $I$ has a unique minimal irreducible decomposition given by

$$I=\bigcap\{\frak m^{{\bf b}\setminus{\bf a}}\,|\, {\bf x}^{\bf a} \mbox{ is a minimal generator of } I^{[{\bf b}]}\}.$$ Therefore,

$${\rm i}\omega(I)=\mu(I^{[{\bf b}]}).$$    
\end{thm}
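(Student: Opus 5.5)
The statement is Theorem \ref{thm_Alex_dual}, the Alexander duality description of the minimal irreducible decomposition. I plan to prove it in two stages. First I show that every monomial ideal $I$ whose generators divide ${\bf x}^{\bf b}$ has a minimal irreducible decomposition by ideals $\frak m^{\bf c}$ with ${\bf c}\preceq {\bf b}$, with nonzero entries restricted as in ${\bf b}\setminus{\bf a}$. Then I identify these components with the minimal generators of $I^{[{\bf b}]}$. Uniqueness and the count ${\rm i}\omega(I)=\mu(I^{[{\bf b}]})$ then follow formally.

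\emph{Existence and shape of the components.} Start from property (iii) at the beginning of Section \ref{sec_monomial}. Whenever a minimal generator is not a pure power, split it as $fg$ with $\gcd(f,g)=1$ and write $I=(\langle f\rangle+J)\cap(\langle g\rangle+J)$. Iterating terminates, since a total degree measure drops. The result is a finite intersection of ideals generated by pure powers $x_i^{c_i}$, that is, of irreducible ideals $\frak m^{\bf c}$. Every exponent $c_i$ produced this way is an exponent of some generator, so $c_i\le b_i$. Discarding redundant terms gives a minimal irreducible decomposition.

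To see uniqueness, I use the fact that $\frak m^{\bf c}\supseteq\frak m^{{\bf c}'}$ if and only if ${\rm supp}({\bf c})\subseteq{\rm supp}({\bf c}')$ and $c_i\ge c'_i$ on ${\rm supp}({\bf c})$. I also use the fact that irreducibility of $\frak m^{\bf c}$ means $\frak m^{\bf c}\supseteq J_1\cap J_2$ forces containment of one $J_i$ when the $J_i$ are monomial. This holds because the monomial ${\bf x}^{{\bf c}-{\bf 1}}$, taken over ${\rm supp}({\bf c})$ and padded with high powers elsewhere, lies outside $\frak m^{\bf c}$. Consequently, two minimal irreducible monomial decompositions must contain each other's components, so the set of components is unique.

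\emph{Matching with $I^{[{\bf b}]}$.} The key translation is the equivalence
$${\bf x}^{\bf a}\in\frak m^{\bf c}\iff\exists i:\ c_i\ge1,\ a_i\ge c_i .$$
Write ${\bf a}'={\bf b}\setminus{\bf c}$, taken with the appropriate support conventions. A monomial ${\bf x}^{\bf d}$ with ${\bf d}\preceq{\bf b}$ lies in $\frak m^{{\bf b}\setminus{\bf a}_j}$ exactly when there is an $i$ with $a_{j,i}\ge1$ and $d_i\ge b_i+1-a_{j,i}$, that is, $a_{j,i}> b_i-d_i$. Call this relation $(\ast)$.

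Now consider $I\subseteq\frak m^{\bf c}$, which says every ${\bf x}^{{\bf a}_j}$ lies in $\frak m^{\bf c}$. Substituting ${\bf d}={\bf b}\setminus{\bf c}$, the condition $(\ast)$ becomes exactly ${\bf x}^{{\bf b}\setminus{\bf c}}\in\frak m^{{\bf b}\setminus{\bf a}_j}$ for all $j$, that is, ${\bf x}^{{\bf b}\setminus{\bf c}}\in I^{[{\bf b}]}$. The map ${\bf c}\mapsto{\bf b}\setminus{\bf c}$ is an involution on vectors $\preceq{\bf b}$ with a fixed support. It reverses the order $\frak m^{\bf c}\supseteq\frak m^{{\bf c}'}$ into divisibility, with smaller dual exponent meaning a larger irreducible ideal. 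Hence the irreducible ideals containing $I$ that are maximal, i.e. the components of the minimal decomposition, correspond exactly to the minimal generators of $I^{[{\bf b}]}$.

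Putting the pieces together gives the displayed intersection, uniqueness, and ${\rm i}\omega(I)=\mu(I^{[{\bf b}]})$.

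The main obstacle is the support bookkeeping in the involution. When $c_i=0$ we need a convention in which that coordinate neither constrains nor appears, and the entries $a_i=b_i$ give $c_i=1$, which must be treated so the bijection really is order-reversing. I would first verify $(\ast)$ coordinatewise with care, possibly by replacing ${\bf b}$ with ${\bf b}+{\bf 1}$ artificially, before assembling the argument.
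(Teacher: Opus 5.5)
Your overall plan works, and most of the pieces are correct. The splitting via property (iii) gives monomial irreducible components with ${\bf c}\preceq{\bf b}$. The intersection property ($\mathfrak m^{\bf c}\supseteq J_1\cap\cdots\cap J_r$ with $J_i$ monomial forces $\mathfrak m^{\bf c}\supseteq J_i$ for some $i$) is right and yields uniqueness. Your computation $(\ast)$ correctly proves the key lemma: for ${\bf c}\preceq{\bf b}$, $I\subseteq\mathfrak m^{\bf c}$ if and only if ${\bf x}^{{\bf b}\setminus{\bf c}}\in I^{[{\bf b}]}$. The paper gives no proof of its own and simply quotes \cite[Theorem 5.27]{MiSt}, whose machinery rests on the membership duality ${\bf x}^{\bf a}\in I\iff{\bf x}^{{\bf b}-{\bf a}}\notin I^{[{\bf b}]}$ for ${\bf a}\preceq{\bf b}$. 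Your containment lemma is a legitimate direct substitute for that.

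The gap is in the final matching step, where the direction of inclusion is wrong twice.

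First, your containment criterion is backwards. The correct statement is: $\mathfrak m^{\bf c}\supseteq\mathfrak m^{{\bf c}'}$ iff ${\rm supp}({\bf c}')\subseteq{\rm supp}({\bf c})$ and $c_i\le c'_i$ for $i\in{\rm supp}({\bf c}')$. Your version would give $\langle x\rangle\supseteq\langle x,y\rangle$. The map ${\bf c}\mapsto{\bf b}\setminus{\bf c}$ preserves supports and reverses each coordinate on the support. Combined with the correct criterion, this makes the involution order-\emph{preserving}: $\mathfrak m^{\bf c}\subseteq\mathfrak m^{{\bf c}'}$ iff ${\bf b}\setminus{\bf c}\preceq{\bf b}\setminus{\bf c}'$.

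Second, the components of the irredundant decomposition are the inclusion-\emph{minimal} irreducible monomial ideals containing $I$, not the maximal ones. Take $I=\langle x^2,xy\rangle=\langle x\rangle\cap\langle x^2,y\rangle$ with ${\bf b}=(2,1)$. The ideals $\mathfrak m^{\bf c}\supseteq I$ with ${\bf c}\preceq{\bf b}$ are $\langle x\rangle$, $\langle x^2,y\rangle$ and $\langle x,y\rangle$. The only maximal one, $\langle x,y\rangle$, is not a component, and it corresponds to $x^2y$, a non-minimal element of $I^{[{\bf b}]}=\langle x^2,xy\rangle$.

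The two errors cancel, so the conclusion you state is true. However, as written it rests on two false claims, and the ``i.e.'' identifying the components with extremal overideals is never justified.

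To repair it, use your intersection property: every irreducible monomial ideal containing $I=\bigcap_i Q_i$ contains some $Q_i$. Irredundancy then shows the $Q_i$ are exactly the inclusion-minimal irreducible monomial ideals containing $I$, and by your first stage they satisfy ${\bf c}\preceq{\bf b}$. The order-preserving bijection ${\bf c}\mapsto{\bf b}\setminus{\bf c}$ carries them to the $\preceq$-minimal elements of $\{{\bf d}\preceq{\bf b}:{\bf x}^{\bf d}\in I^{[{\bf b}]}\}$. These are precisely the exponent vectors of $G(I^{[{\bf b}]})$. Indeed, every minimal generator of $I^{[{\bf b}]}=\bigcap_j\mathfrak m^{{\bf b}\setminus{\bf a}_j}$ is an lcm of pure powers $x_i^{b_i+1-a_{j,i}}$ with $a_{j,i}\ge 1$, so its exponent vector is $\preceq{\bf b}$. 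With this, the support bookkeeping you flagged as the main obstacle is a short coordinatewise check, and no shift to ${\bf b}+{\bf 1}$ is needed.
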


The following example illustrates how Theorem~\ref{thm_Alex_dual} can be applied in practice.
\begin{exm}
 Let $R=K[x,y,z]$ and consider the
monomial ideal
$I=\langle x^2y,xz^2,y^2z\rangle$.
Take $\mathbf{b}=(2,2,2)$. Since each minimal generator of $I$
divides $x^2y^2z^2$, the Alexander dual $I^{[\mathbf{b}]}$ is
well defined.
The exponent vectors of the minimal generators of $I$ are
\[
\mathbf{a}_1=(2,1,0),\qquad
\mathbf{a}_2=(1,0,2),\qquad
\mathbf{a}_3=(0,2,1).
\]
Hence, we have
\[
\begin{aligned}
\mathbf{b}\setminus\mathbf{a}_1
&=(2+1-2,2+1-1,0)=(1,2,0),\\
\mathbf{b}\setminus\mathbf{a}_2
&=(2+1-1,0,2+1-2)=(2,0,1),\\
\mathbf{b}\setminus\mathbf{a}_3
&=(0,2+1-2,2+1-1)=(0,1,2).
\end{aligned}
\]
It follows that
\[
\begin{aligned}
I^{[\mathbf{b}]}
&=\mathfrak{m}^{(1,2,0)}
  \cap\mathfrak{m}^{(2,0,1)}
  \cap\mathfrak{m}^{(0,1,2)}\\
&=\langle x,y^2\rangle\cap\langle x^2,z\rangle\cap\langle y,z^2\rangle.
\end{aligned}
\]
Computing this intersection gives
$I^{[\mathbf{b}]}
=\langle x^2y,xyz,xz^2,y^2z\rangle$. Thus,
$\mu(I^{[\mathbf{b}]})=4.$
By Theorem~\ref{thm_Alex_dual}, we obtain
${\rm i}\omega(I)=\mu(I^{[\mathbf{b}]})=4.$
Moreover, the minimal generators of $I^{[\mathbf{b}]}$ are
\[
x^2y,\quad xyz,\quad xz^2,\quad y^2z.
\]
Applying the operation $\mathbf{b}\setminus\mathbf{a}$ to their
exponent vectors gives, respectively,
\[
(1,2,0),\quad(2,2,2),\quad(2,0,1),\quad(0,1,2).
\]
Therefore, the unique minimal irreducible decomposition of $I$ is given by 
\[
I=\langle x,y^2\rangle
\cap\langle x^2,y^2,z^2\rangle
\cap\langle x^2,z\rangle
\cap\langle y,z^2\rangle.
\]
In particular, this decomposition has four irreducible components,
in agreement with
\[
{\rm i}\omega(I)=\mu(I^{[\mathbf{b}]} )=4.
\]
\end{exm}

From Theorem \ref{thm_witness_irreducible} we can produce immediately the following formula for the omega invariant of a monomial ideal, similar to Corollary \ref{cor_omega_any_monomial} obtained previously.

\begin{cor}\label{cor_omega_ADual} Let $I\subset R=\mathbb K[x_1,\ldots,x_k]$ be a monomial ideal. Let $I^{[{\bf b}]}$ be the Alexander dual of $I$ w.r.t. some exponent vector ${\bf b}$. For $u\in G(I^{[{\bf b}]})$, denote $S_u:={\rm supp}(u)$, $\displaystyle\bar{u}:=\prod_{i\in S_u}x_i$, and for any $\ell\in[k]\setminus S_u$, $b_{\ell,u}=\max\{d_{\ell}(f)\mid f\in G(I)\}$. Also, let $\displaystyle M_u:=\frac{u}{\bar{u}}\cdot\prod_{\ell\in[k]\setminus S_u}x_{\ell}^{b_{\ell,u}}$.

Then,
$$\omega(I)=\left|\left\{(I:M_u)\mid u\in  G(I^{[{\bf b}]})\right\}\right|.$$
\end{cor}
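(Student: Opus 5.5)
The plan is to combine Theorem \ref{thm_Alex_dual}, which describes the minimal irreducible decomposition of $I$ through $G(I^{[{\bf b}]})$, with Theorem \ref{thm_witness_irreducible}, which turns each irreducible component into an explicit monomial witness. Concretely, write each $u\in G(I^{[{\bf b}]})$ as $u={\bf x}^{\bf a}$. By Theorem \ref{thm_Alex_dual},
$$I=\bigcap_{u\in G(I^{[{\bf b}]})}\frak m^{{\bf b}\setminus{\bf a}}$$
is the unique minimal irreducible decomposition of $I$. The component $Q_u:=\frak m^{{\bf b}\setminus{\bf a}}$ is generated by pure powers of the variables $x_i$, $i\in S_u={\rm supp}(u)$, because $({\bf b}\setminus{\bf a})_i\neq 0$ exactly when $a_i\geq 1$. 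Hence $\sqrt{Q_u}=P_u:=\langle x_i\mid i\in S_u\rangle$.

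The first step is to show that every $P_u$ is an associated prime, and that $M_u$ is a witness for it. I will apply Theorem \ref{thm_witness_irreducible} to the component $Q_u$. The monomial it produces has exponent (component exponent $-1$) on $x_i$ for $i\in S_u$, and exponent $\max\{d_\ell(f)\mid f\in G(I)\}$ on $x_\ell$ for $\ell\notin S_u$. The second part is exactly the factor $\prod_{\ell\notin S_u}x_\ell^{b_{\ell,u}}$ in $M_u$. The first part must be matched with $u/\bar u$, and this is the one place that needs care. The exponent of $Q_u$ at $x_i$ is $b_i+1-a_i$, so the witness exponent there is $b_i-a_i$, whereas $u/\bar u$ has exponent $a_i-1$.

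I therefore plan to check carefully which convention for $u$ versus ${\bf b}\setminus{\bf a}$ makes these agree. The natural reading is that $u$ should stand for the monomial ${\bf x}^{{\bf b}\setminus{\bf a}}$ attached to the component, i.e. the exponent vector of the irreducible component. This reading is consistent with the example after Theorem \ref{thm_Alex_dual}, where the components are indexed by the generators of the dual. If necessary I will state the corollary with $u$ interpreted in this way. I regard this bookkeeping as the main obstacle, since the rest is formal. Once the identification is made, Theorem \ref{thm_witness_irreducible} gives $(I:M_u)=P_u$. Since $P_u$ is prime, Theorem \ref{thm_essential} then gives $P_u\in{\rm Ass}(R/I)$.

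The second step is the converse: every $P\in{\rm Ass}(R/I)$ equals some $P_u$. Grouping the irreducible components $Q_u$ by their radicals gives a primary decomposition of $I$, since intersections of $P$-primary ideals are $P$-primary. Every associated prime of $I$ must then appear among the radicals of this decomposition, because ${\rm Ass}(R/I)$ is contained in the set of radicals of the components of any primary decomposition. So every $P\in{\rm Ass}(R/I)$ equals $P_u$ for some $u$. Combining the two steps, $\{(I:M_u)\mid u\in G(I^{[{\bf b}]})\}=\{P_u\}={\rm Ass}(R/I)$ as sets. Taking cardinalities and using $\omega(I)=|{\rm Ass}(R/I)|$ gives the formula; different $u$ with the same support collapse to the same prime, which is why the count is of distinct colon ideals rather than of $u$'s.
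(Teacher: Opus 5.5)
Your route is the paper's. Theorem~\ref{thm_Alex_dual} supplies the irreducible components, and Theorem~\ref{thm_witness_irreducible} converts each one into the witness $M_u$. Passing to the set of colon ideals then merges components with the same radical. Your second step, that every associated prime is the radical of some component, makes explicit what the paper's one-line count leaves implicit.

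The bookkeeping problem you flagged is genuine, and the paper's proof does not resolve it. For $u=x_{i_1}^{a_1}\cdots x_{i_m}^{a_m}\in G(I^{[{\bf b}]})$ it asserts that $\langle x_{i_1}^{a_1},\ldots,x_{i_m}^{a_m}\rangle$ is an irreducible component of $I$. This contradicts Theorem~\ref{thm_Alex_dual}, where the component is $\frak m^{{\bf b}\setminus{\bf a}}$.

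With $u$ read literally, the formula is false. Take the example following Theorem~\ref{thm_Alex_dual}: $I=\langle x^2y,xz^2,y^2z\rangle$ with ${\bf b}=(2,2,2)$. The dual generators $x^2y,xyz,xz^2,y^2z$ give $M_u=xz^2,1,y^2z,x^2y$. The only colon ideals are then $R$ and $I$, while $\omega(I)=4$. So your reading of $u$ as ${\bf x}^{{\bf b}\setminus{\bf a}}$ is necessary; it is also the one Example~\ref{exm_v_number} actually uses. With that reading your argument is correct.

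One small repair is needed. Theorem~\ref{thm_witness_irreducible} as stated presupposes $P\in{\rm Ass}(R/I)$, so it cannot by itself give $P_u\in{\rm Ass}(R/I)$. Supply that separately: grouping the components of the irredundant irreducible decomposition by radical gives a minimal primary decomposition. Indeed, if a whole group were redundant, each of its members would be redundant in the irreducible decomposition. Hence every $P_u$ is associated, and this also yields your second step at once.
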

\begin{proof} Let $u=x_{i_1}^{a_1}\cdots x_{i_m}^{a_m}$ be any minimal generator of $I^{[{\bf b}]}$, the Alexander dual of $I$. By Theorem \ref{thm_Alex_dual}, $\langle x_{i_1}^{a_1},\ldots,x_{i_m}^{a_m}\rangle$ is a $\langle x_{i_1},\ldots,x_{i_m}\rangle$-primary component in the minimal irreducible decomposition of $I$. By Theorem \ref{thm_witness_irreducible}, $M_u$ satisfies $(I:M_u)=\langle x_{i_1},\ldots,x_{i_m}\rangle$. But, the cardinality of the set $\{(I:M_u)\mid u\in G(I^{[{\bf b}]})\}$ ensures that it counts only once a prime associated to two different irreducible components, hence the claimed formula.
\end{proof}
\subsubsection{Distinguishing generators of different supports.}\label{sec_diff_supp} From Theorem \ref{thm_Alex_dual}, it is clear that minimal generators of the Alexander dual with the same support will produce irreducible components with the same associated prime ideal. So, in order to obtain $\omega(I)$, rather than ${\rm i}\omega(I)$, one needs to count only those elements of $G(I^{[{\bf b}]})$ that have distinct support. Of course, one can do this by going by hand or with a computer through every element of $G(I^{[{\bf b}]})$ and save those that have different support. Theoretically, we can do this in the following way.

First, we define {\em the $\mu$-support} of a monomial ideal $I\subset R=\mathbb K[x_1,\ldots,x_k]$ as the number of minimal monomial generators of $I$ with different supports; i.e.,

$$\mu{\rm supp}(I)=|\{{\rm supp}(f) \mid f\in G(I)\}|.$$ With this notation, if $I^{[{\bf b}]}$ is the Alexander dual of $I$ w.r.t. some exponent vector ${\bf b}$ then,
$$\omega(I)=\mu{\rm supp}(I^{[{\bf b}]}).$$

In the symbolic computations below, we will also use the classical concept of {\em initial degree}, $\alpha(J)$, of a homogeneous ideal $J$; i.e., $$\alpha(J):=\min\{d\mid J_d\neq 0\} = \min\{\deg(f)\mid f\in J\setminus\{0\}\}.$$

Next, suppose $G(I)=\{f_1,\ldots,f_s\}$; obviously $s=|G(I)|=\mu(I)$, the minimum number of generators of $I$. Let $S:=R[t_1,\ldots,t_s]$ and consider the ideal $$I(t_1,\ldots,t_s):=\langle t_1f_1,\ldots,t_sf_s\rangle\subset S.$$

\begin{prop}\label{prop_mu_support} Let $I\subset R=\mathbb K[x_1,\ldots,x_k]$ be a monomial ideal. Then,

$$\mu{\rm supp}(I)=\mu(I)-\dim_{\mathbb K}{\rm Tor}_2\left(\frac{S}{\sqrt{I(t_1,\ldots,t_{\mu(I)})}},\mathbb K\right)_{\alpha(\sqrt{I})+2}.$$    
\end{prop}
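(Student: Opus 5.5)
The plan is to make everything explicit by passing to radicals. The $t_i$ are new variables, so $\sqrt{I(t_1,\ldots,t_s)}=\langle t_1\bar f_1,\ldots,t_s\bar f_s\rangle$, where $\bar f_i:=\prod_{j\in{\rm supp}(f_i)}x_j$ and $s=\mu(I)$. These $s$ generators are squarefree and pairwise incomparable, because each contains its own $t_i$. So they form $G(\sqrt{I(t)})$, and the ideal is a squarefree monomial ideal in $S$, multigraded by $\mathbb N^{k+s}$. In the same way $\sqrt I$ is generated by the $\bar f_i$. Hence $\alpha(\sqrt I)=\min_i|{\rm supp}(f_i)|$, and the generators $t_i\bar f_i$ have degrees $|{\rm supp}(f_i)|+1\geq \alpha(\sqrt I)+1$.

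Next I would compute ${\rm Tor}_2(S/J,\mathbb K)$, with $J:=\sqrt{I(t)}$, multidegree by multidegree. The tool is the lcm-lattice (Taylor complex) description: ${\rm Tor}_2(S/J,\mathbb K)_{\bf m}$ is nonzero only for ${\bf m}$ an lcm of generators, and it equals a reduced homology group of the complex of generators dividing ${\bf x}^{\bf m}$ (the upper Koszul/lcm-lattice complex). A multidegree of total degree $\alpha(\sqrt I)+2$ that carries a first syzygy must be ${\rm lcm}(t_i\bar f_i,t_j\bar f_j)=t_it_j\,{\rm lcm}(\bar f_i,\bar f_j)$ for some $i\ne j$. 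Its degree is $2+|{\rm supp}(f_i)\cup{\rm supp}(f_j)|$. This forces ${\rm supp}(f_i)={\rm supp}(f_j)=A$ with $|A|=\alpha(\sqrt I)$. The key observation is therefore that only pairs of generators with the \emph{same} support can contribute to this graded piece. Moreover, no third generator $t_\ell\bar f_\ell$ divides $t_it_j\bar f_i$, again because of $t_\ell$.

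Then I would group $G(I)$ by support classes, $G(I)=\bigsqcup_A\mathcal C_A$ with $c_A:=|\mathcal C_A|$. Since $\mu{\rm supp}(I)$ is the number of classes, the target is
$$\mu(I)-\mu{\rm supp}(I)=\sum_A(c_A-1).$$
So the remaining task is to show that the contribution of the class $\mathcal C_A$ to the Tor term is $c_A-1$. Within the class the generators are $\bar f_A\,t_{i_1},\ldots,\bar f_A\,t_{i_{c_A}}$. This is $\bar f_A$ times the ideal of the $c_A$ variables, and its syzygies are $\bar f_A$ times the Koszul syzygies on those variables. I would then identify the minimal linear syzygies in the strand starting at the relevant degree with a spanning set of the edges $t_{i_a}e_{i_b}-t_{i_b}e_{i_a}$. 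The intended count is $c_A-1$ independent relations (a spanning tree), with the classes contributing independently because they live in disjoint multidegrees.

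The main obstacle is this last count. A naive multigraded computation attaches one syzygy to each pair $\{i_a,i_b\}$, giving $\binom{c_A}{2}$ rather than $c_A-1$. Moreover, the degree bookkeeping above only reaches classes with $|A|=\alpha(\sqrt I)$, whereas the target sums over all classes. The first thing I would try is to pin down the exact grading convention under which the stated identity is meant. That means fixing whether the degree shift $\alpha(\sqrt I)+2$ is to be read in the $x$-grading or the total grading, and which piece of the linear strand is being measured. I would then check that, under that convention, each support class contributes exactly $c_A-1$, using the Koszul complex on the $t$'s and the spanning-tree reduction of the edge syzygies. If the classes of larger support are not reached by the single graded piece, the identity has to come from summing the linear-strand contributions over all initial degrees, and I would redo the lcm-lattice count in that form.
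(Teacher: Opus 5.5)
The step that fails is your last one, the spanning-tree count. No choice of grading convention rescues it, because the identity as stated is false; your own bookkeeping already shows why.

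\textbf{The correct count.} Put $J=\sqrt{I(t_1,\ldots,t_s)}$. By the Taylor complex, ${\rm Tor}_2(S/J,\mathbb K)$ lives in multidegrees that are lcm's of at least two generators, and each such lcm contains at least two distinct $t$'s. So in total degree $\alpha(\sqrt I)+2$ the only multidegrees that occur are $t_at_b\bar f_A$ with $a\neq b$ in a class $\mathcal C_A$ with $|A|=\alpha(\sqrt I)$. In each of them the upper Koszul complex is two points, so the Betti number is $1$. Thus
\[
\dim_{\mathbb K}{\rm Tor}_2(S/J,\mathbb K)_{\alpha(\sqrt I)+2}=\sum_{|A|=\alpha(\sqrt I)}\binom{c_A}{2}.
\]

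\textbf{Why the spanning-tree reduction cannot work.} The edge syzygies $t_be_a-t_ae_b$ sit in pairwise incomparable squarefree multidegrees. No other syzygy lives in a multidegree dividing any of them. Hence every one of them is a minimal generator of the syzygy module. The number $c_A-1$ is the \emph{rank} of the syzygy module of $c_A$ elements, not its number of minimal generators.

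\textbf{Counterexamples.}
\begin{enumerate}
\item For $I=\langle x^3y,x^2y^2,xy^3\rangle\subset\mathbb K[x,y]$ we have $\mu(I)=3$, $\mu{\rm supp}(I)=1$ and $\alpha(\sqrt I)=2$. Here $J=xy\langle t_1,t_2,t_3\rangle$ has a Koszul resolution with $\dim_{\mathbb K}{\rm Tor}_2(S/J,\mathbb K)_4=3$, so the right-hand side equals $0$.
\item For $I=\langle x,y^2z,yz^2\rangle$ we have $\mu{\rm supp}(I)=2$ and $\alpha(\sqrt I)=1$. Every lcm of two generators of $J=\langle t_1x,t_2yz,t_3yz\rangle$ has degree at least $4$, so ${\rm Tor}_2(S/J,\mathbb K)_3=0$ and the right-hand side equals $3$.
\end{enumerate}

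\textbf{Where the paper's proof breaks.} It fails at exactly the two points you flagged.
\begin{enumerate}
\item It asserts that the $w-1$ syzygies $(t_{i_j},0,\ldots,0,-t_{i_1},0,\ldots,0)$, $j=2,\ldots,w$, form a basis of all linear syzygies on a same-support class. This is false for $w\geq 3$: the syzygy with $t_{i_3}$ in position $i_2$ and $-t_{i_2}$ in position $i_3$ is not in their span.
\item It tacitly lets the single graded piece in degree $\alpha(\sqrt I)+2$ see every support class. Classes with $|A|>\alpha(\sqrt I)$ only appear in higher degrees. Even there they contribute $\binom{c_A}{2}$, together with cross-class terms, rather than $c_A-1$.
\end{enumerate}

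So neither your plan nor the paper's argument can be completed. What your computation actually proves is the displayed formula. The proposition holds only when $\sum_{|A|=\alpha(\sqrt I)}\binom{c_A}{2}$ happens to equal $\mu(I)-\mu{\rm supp}(I)=\sum_A(c_A-1)$. This happens, for instance, when every support class has at most two elements and every two-element class has support of minimum size.
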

\begin{proof} With the above notations, we have $$\sqrt{I(t_1,\ldots,t_s)}=\left\langle \underbrace{t_1\prod_{i\in{\rm supp}(f_1)}x_i}_{g_1},\ldots,\underbrace{t_s\prod_{i\in{\rm supp}(f_s)}x_i}_{g_s}\right\rangle.$$

Obviously, $\mu(I)=\mu(\sqrt{I(t_1,\ldots,t_s)})$ and $$\alpha(\sqrt{I(t_1,\ldots,t_s)})=1+\min\{|{\rm supp}(f)|\mid f\in G(I)\}=1+\alpha(\sqrt{I}).$$

It is clear that for $i< j$, we have ${\rm supp}(f_i)={\rm supp}(f_j)$ if and only if $t_jg_i-t_ig_j=0$, leading to the linear syzygy on $g_1,\ldots,g_s$, $$(0,\ldots,0,t_j,0,\ldots,0,-t_i,0,\ldots,0).$$ 

If ${\rm supp}(f_{i_1})=\cdots={\rm supp}(f_{i_w}),$ for some $w$, then, for $j=2,\ldots, w$, the $w-1$ many linear syzygies $(t_{i_j},0,\ldots,0,-t_{i_1},0,\ldots,0)$ form a basis for the space of all the linear syzygies on $g_{i_1},\ldots,g_{i_w}$.

Since there are no other linear syzygies among $g_1,\ldots,g_s$ the result comes immediately as expressing the dimension of linear syzygies of $\sqrt{I(t_1,\ldots,t_s)}$ using ${\rm Tor}$.
\end{proof}

\subsubsection{The {\rm v}-number of monomial ideals} The v-number of an ideal was introduced in \cite[Definition 4.1]{CSTVV}. The {\em {\rm v}-number} of a homogeneous ideal $I\subset R=\mathbb K[x_1,\ldots,x_k]$ is

$${\rm v}(I):=\left\{
                          \begin{array}{ll}
                            \min\{d\geq 1\mid \mbox{ there exists } a\in R_d \mbox{ and } P\in {\rm Ass}(R/I) \mbox{ with } (I:a)=P, & \hbox{if $I\subsetneq \frak m$,} \\
                            0, & \hbox{if $I= \frak m$,}
                          \end{array}
                        \right.$$ where $\frak m:=\langle x_1,\ldots,x_k\rangle$. Also by convention, if $P\neq\frak m$ is a prime ideal, ${\rm v}(P)=1$.

The importance of the v-number is reflected in \cite[Section 4.1]{CSTVV}: in certain sufficiently general, yet convenient conditions, it is the index where the minimum distance function on $I$ stabilizes at value 1 (see \cite[Proposition 4.6]{CSTVV}), and also influences the shape of the graded free resolution of $I$ (see \cite[Corollary 4.4, Theorem 4.10, Corollary 4.15]{CSTVV}).

In a snap-shot, the v-number of a homogeneous ideal is the minimum degree of a witness. So from Corollary \ref{cor_omega_ADual} we have the following corollary:

\begin{cor}\label{cor_v_number} Let $I\subset R=\mathbb K[x_1,\ldots,x_k]$ be a monomial ideal. Let $I^{[{\bf b}]}$ be the Alexander dual of $I$ w.r.t. some exponent vector ${\bf b}$. For $u\in G(I^{[{\bf b}]})$, let $S_u:={\rm supp}(u)$, and for any $\ell\in[k]\setminus S_u$, let $b_{\ell,u}:=\max\{d_{\ell}(f)\mid f\in G(I)\}$. Then, 
$${\rm v}(I)\leq\min_{u\in G(I)}\left\{\deg(u)-|S_u|+\sum_{\ell\in[k]\setminus S_u}b_{\ell,u}\right\}.$$
\end{cor}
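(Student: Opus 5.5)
The plan is to read the bound off Corollary \ref{cor_omega_ADual}, together with Theorem \ref{thm_witness_irreducible} on which it rests. (Note the index set in the statement should be read as $u\in G(I^{[{\bf b}]})$, as in Corollary \ref{cor_omega_ADual}; with $u\in G(I)$ the quantities $S_u$ and $b_{\ell,u}$ would refer to the wrong ideal.) The v-number is the minimum degree of a homogeneous witness $a$ with $(I:a)=P$ for some $P\in{\rm Ass}(R/I)$. So it is enough to show that for each $u\in G(I^{[{\bf b}]})$ there is a homogeneous witness of degree exactly $\deg(u)-|S_u|+\sum_{\ell\in[k]\setminus S_u}b_{\ell,u}$. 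Taking the minimum over $u$ then gives the inequality.

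First I treat the trivial cases. If $I=\frak m$ then ${\rm v}(I)=0$ and the bound holds, since the right-hand side is a sum of nonnegative terms. Indeed, $\deg(u)\geq |S_u|$ because every variable in the support of $u$ appears with exponent at least $1$. So assume $I\subsetneq\frak m$, and note that every monomial is homogeneous.

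Now fix $u=x_{i_1}^{a_1}\cdots x_{i_m}^{a_m}\in G(I^{[{\bf b}]})$, so that $S_u=\{i_1,\ldots,i_m\}$ and every $a_j\geq 1$. By Theorem \ref{thm_Alex_dual}, the ideal $\langle x_{i_1}^{a_1},\ldots,x_{i_m}^{a_m}\rangle$ is a component of the minimal irreducible decomposition of $I$, with radical $P=\langle x_{i_1},\ldots,x_{i_m}\rangle\in{\rm Ass}(R/I)$. Here one small point needs checking: Theorem \ref{thm_Alex_dual} records the component as $\frak m^{{\bf b}\setminus{\bf a}}$, not $\frak m^{\bf a}$. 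I will follow the convention already used in the proof of Corollary \ref{cor_omega_ADual}, where the exponents of $u$ are identified with those of the irreducible component. Alternatively one can replace $u$ by the exponent vector ${\bf b}\setminus{\bf a}$; this has the same support, and the bound is then understood with those exponents.

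Theorem \ref{thm_witness_irreducible} then says that
$$M_u=\frac{u}{\bar u}\prod_{\ell\notin S_u}x_\ell^{b_{\ell,u}}$$
satisfies $(I:M_u)=P$. So $M_u$ is a monomial witness for an associated prime of $I$. Its degree is
$$\sum_{j=1}^m(a_j-1)+\sum_{\ell\notin S_u}b_{\ell,u}=\deg(u)-|S_u|+\sum_{\ell\notin S_u}b_{\ell,u}.$$

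The one remaining degenerate case is $\deg M_u=0$. This would give $(I:1)=I=P$, so $I$ is a prime, and then the convention for primes different from $\frak m$ applies. The main obstacle is not any of these computations but the bookkeeping just mentioned: making sure the exponents of $u$ match the irreducible component of Theorem \ref{thm_Alex_dual}, which is exactly what Corollary \ref{cor_omega_ADual} already uses. Once that is settled, the argument is a direct minimization of $\deg M_u$ over $u$.
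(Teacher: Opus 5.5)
Your route is the paper's own: it reads the corollary straight off Corollary~\ref{cor_omega_ADual}. The step you flag and then set aside is where the argument breaks, and it is not a matter of convention. Theorem~\ref{thm_Alex_dual} attaches to $u=\mathbf{x}^{\mathbf{a}}\in G(I^{[\mathbf{b}]})$, with $\mathbf{b}=(b_1,\ldots,b_k)$, the irreducible component $\mathfrak{m}^{\mathbf{b}\setminus\mathbf{a}}=\langle x_i^{b_i+1-a_i}\mid i\in S_u\rangle$, not $\langle x_i^{a_i}\mid i\in S_u\rangle$. The identification in the proof of Corollary~\ref{cor_omega_ADual} is a slip, and with it $M_u$ is in general not a witness.

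Take the example following Theorem~\ref{thm_Alex_dual}: $I=\langle x^2y,xz^2,y^2z\rangle$, $\mathbf{b}=(2,2,2)$, $G(I^{[\mathbf{b}]})=\{x^2y,xyz,xz^2,y^2z\}$. For $u=x^2y$ your $M_u=xz^2$ lies in $I$, so $(I:M_u)=R$. For $u=xyz$ you get $M_u=1$ and right-hand side $0$. Yet $I$ is not prime, so your degenerate-case reasoning does not apply, and ${\rm v}(I)\geq 2$: a witness $a$ satisfies $x_ia\in I$ for some variable, and $I$ is generated in degree $3$. So the displayed inequality is false as stated, and no bookkeeping will prove it. Reading the index set literally as $u\in G(I)$ does not rescue it either. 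For $I=\langle x^{10}y,xy^{10}\rangle$ that reading gives $9$, while the same degree argument together with the witness $y^{10}$ of $\langle x\rangle$ gives ${\rm v}(I)=10$.

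What your argument does prove is the version with the correct exponents, i.e.\ your ``alternative''. That is a genuinely different bound, not a rereading of $\deg(u)-|S_u|$. Applying Theorem~\ref{thm_witness_irreducible} to $\langle x_i^{b_i+1-a_i}\mid i\in S_u\rangle$ gives the monomial witness $\prod_{i\in S_u}x_i^{b_i-a_i}\prod_{\ell\in[k]\setminus S_u}x_\ell^{b_{\ell,u}}$, hence
$${\rm v}(I)\leq\min_{u\in G(I^{[\mathbf{b}]})}\Big\{\sum_{i\in S_u}b_i-\deg(u)+\sum_{\ell\in[k]\setminus S_u}b_{\ell,u}\Big\}.$$
This is what the table in Example~\ref{exm_v_number} actually computes, since it uses the exponents of the irreducible components rather than those of $u$. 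In the example above every $u$ gives $3$.

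Even this version needs a side condition that your degenerate case glosses over. If the witness is $1$, then $I$ itself is a prime. For a prime $I\neq\mathfrak{m}$ such as $\langle x_1\rangle\subset\mathbb{K}[x_1,x_2]$, the right-hand side is $0$ while the paper's convention gives ${\rm v}(I)=1$. So primes other than $\mathfrak{m}$ must be excluded.
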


It is not hard to construct examples when we have equality in the above result. Because Corollary \ref{cor_omega_ADual} and consequently, Corollary \ref{cor_v_number}, uses the minimal irreducible decomposition of $I$ rather than a minimal primary decomposition, we should expect examples where the inequality is strict. This is indeed the case, as we can see in the next example, which is \cite[Example 4.3]{CSTVV}.

\begin{exm}\label{exm_v_number} Consider the following ideal in $\mathbb Q[x_1,x_2,x_3,x_4]$ given by the following minimal primary decomposition

$$I=\langle x_2^{10},x_3^9,x_4^4,x_2x_3x_4^3\rangle\cap\langle x_1^4,x_3^4,x_4^3,x_1x_3x_4^2\rangle \cap\langle x_1^4, x_2^5,x_4^3\rangle\cap\langle x_1^3, x_2^5,x_3^{10}\rangle.$$

With \cite{GrSt}, we have
\begin{align*}
G(I)=\{& x_1^4x_2^{10},x_2^{10}x_3^4,x_1^4x_3^9,x_2^5x_3^9,x_1x_2^{10}x_3x_4^2,x_2^{10}x_4^3,\\
& x_1^3x_2x_3x_4^3,x_2^5x_3x_4^3,x_1^3x_3^9x_4^3,x_3^{10}x_4^3,x_1^3x_4^4,x_2^5x_4^4\},
\end{align*}
   
and so, the maximum degrees of each variable in the minimal generators of $I$ are $$d_{x_1}=4, d_{x_2}=10, d_{x_3}=9,d_{x_4}=4;$$ we need these when we construct the witnesses.

The first two primary components of $I$ are not irreducible ideals, each decomposing as intersection of three irreducible ideals that do not overlap. So ${\rm i}\omega(I)=8$. In the table below we list each irreducible component with its corresponding witness, $M_u$, from Corollary \ref{cor_omega_ADual}.

\medskip

\begin{center}
\begin{tabular}{c|c|c}

irreducible component corresponding to $u$ & $M_u$ & $\deg(M_u)$\\
\hline

$\langle x_2,x_3^9,x_4^4\rangle$ & $(x_2^0x_3^8x_4^3)x_1^4$ & $15$ \\

\hline

$\langle x_2^{10},x_3,x_4^4\rangle$ & $(x_2^9x_3^0x_4^3)x_1^4$ & $16$\\

\hline

$\langle x_2^{10},x_3^9,x_4^3\rangle$ & $(x_2^9x_3^8x_4^2)x_1^4$ & $23$\\

\hline

$\langle x_1,x_3^4,x_4^3\rangle$ & $(x_1^0x_3^3x_4^2)x_2^{10}$ & $15$\\

\hline

$\langle x_1^4,x_3,x_4^3\rangle$ & $(x_1^3x_3^0x_4^2)x_2^{10}$ & $15$\\

\hline

$\langle x_1^4,x_3^4,x_4^2\rangle$ & $(x_1^3x_3^3x_4^1)x_2^{10}$ & $17$\\

\hline

$\langle x_1^4,x_2^5,x_4^3\rangle$ & $(x_1^3x_2^4x_4^2)x_3^{9}$ & $18$\\

\hline

$\langle x_1^3,x_2^5,x_3^{10}\rangle$ & $(x_1^2x_2^4x_3^9)x_4^{4}$ & $19$
\end{tabular}
\end{center}

The minimum value showing up in the last column of the table above is $15$. But $(I:x_2^9x_4^3)=\langle x_2,x_3,x_4\rangle$, so ${\rm v}(I)\leq 12$. In fact, as \cite[Example 4.3]{CSTVV} mentions, ${\rm v}(I)=12$.

\end{exm}

\medskip

\subsection{The maximal standard monomials (msm) method}\label{sec_msm}

Let $I\subset R:=\mathbb K[x_1,\ldots,x_k]$ be a monomial ideal. A monomial $M\in R$ is called {\em maximum standard monomial of $I$} if $M\notin I$ and $x_1M,\ldots,x_kM\in I$. This monomial is often referred to in literature as a ``corner element" or a ``socle element".

Let $J$ be an ideal of $R$ such that ${\rm Ass}(J)=\{\langle x_1,\ldots,x_k\rangle\}$; i.e., the algebra $R/J$ is {\em Artinian}. Then, {\em the socle of $R/J$} is the finite dimensional $\mathbb K$-vector space $${\rm Soc}(R/J):=\{\overline{f}\in R/J\,|\, x_1f,\ldots,x_kf\in J\}.$$

We have the following result concerning the irreducible omega invariant of a monomial ideal.

\begin{thm}\label{thm_msm} (\cite[Exercise 5.8]{MiSt}, \cite[Proposition 3 in Section 2.2]{Ro}) Let $I$ be a monomial ideal in $R=\mathbb K[x_1,\ldots,x_k]$, and let $t$ be an integer strictly larger than the degree of any minimal generator of $I$. Then
$${\rm i}\omega(I)=\dim_{\mathbb K}{\rm Soc}(R/(I+\langle x_1^t,\ldots,x_k^t\rangle).$$    
\end{thm}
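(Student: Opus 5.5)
The plan is to reduce everything to the Artinian monomial ideal $J:=I+\langle x_1^t,\ldots,x_k^t\rangle$. Two things need to be shown: (1) ${\rm i}\omega(I)={\rm i}\omega(J)$; (2) for an Artinian monomial ideal, ${\rm i}\omega(J)=\dim_{\mathbb K}{\rm Soc}(R/J)$. Throughout, write $\frak m^{[t]}:=\langle x_1^t,\ldots,x_k^t\rangle$ and use the notation $\frak m^{\bf a}$ from the Alexander dual subsection.

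For (1), I will use property (v) at the beginning of the section to take a minimal irreducible decomposition $I=\bigcap_{i=1}^r\frak m^{{\bf a}_i}$. Each exponent appearing in ${\bf a}_i$ is at most the largest degree of a variable among the minimal generators, hence $<t$. This bound holds for the decomposition produced by Theorem~\ref{thm_Alex_dual} with ${\bf b}$ the coordinatewise maximum of the generator exponents; by uniqueness of the minimal irreducible decomposition it holds for any minimal one.

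The key algebraic fact is that monomial ideals form a distributive lattice under $+$ and $\cap$. This follows from property (i): membership of a monomial in $A\cap B$, $A+C$, and so on is decided monomial by monomial. Hence
$$J=\bigcap_{i=1}^r\bigl(\frak m^{{\bf a}_i}+\frak m^{[t]}\bigr)=\bigcap_{i=1}^r\frak m^{{\bf a}_i'},$$
where ${\bf a}_i'$ is obtained from ${\bf a}_i$ by replacing each zero coordinate by $t$. Each $\frak m^{{\bf a}_i'}$ is irreducible. Because all nonzero entries of ${\bf a}_i$ are $<t$, the map ${\bf a}_i\mapsto{\bf a}_i'$ is injective, so distinct components stay distinct.

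Next I check irredundancy. Minimality of the decomposition of $I$ gives a monomial $N\in\bigcap_{i\ne j}\frak m^{{\bf a}_i}\setminus\frak m^{{\bf a}_j}$. Truncate $N$ by replacing each exponent $\ge t$ in a variable outside ${\rm supp}({\bf a}_j)$ by $t-1$. The truncated monomial still avoids $\frak m^{{\bf a}_j'}$. It still lies in every other $\frak m^{{\bf a}_i'}$, since those ideals contain $x_\ell^t$ for each such variable, while the exponents in ${\rm supp}({\bf a}_i)$ that witnessed membership were $<t$ or were untouched. I expect this truncation bookkeeping to be the main obstacle: one must verify carefully that membership in each $\frak m^{{\bf a}_i'}$, $i\ne j$, survives. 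Granting it, ${\rm i}\omega(J)=r={\rm i}\omega(I)$.

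For (2), since $J$ is a monomial ideal, ${\rm Soc}(R/J)$ is $\mathbb Z^k$-graded. Hence it has a basis of classes of monomials $M\notin J$ with $x_iM\in J$ for all $i$, i.e. the maximal standard monomials of $J$. Let $J=\bigcap_{i=1}^r\frak m^{{\bf c}_i}$ be the irredundant decomposition with all entries of each ${\bf c}_i$ positive (which holds since $J$ is Artinian). A monomial lies outside $J$ iff it divides ${\bf x}^{{\bf c}_i-{\bf 1}}$ for some $i$. Hence the maximal standard monomials are exactly the maximal elements, under divisibility, of the set $\{{\bf x}^{{\bf c}_i-{\bf 1}}\}$.

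Finally, irredundancy forces all ${\bf x}^{{\bf c}_i-{\bf 1}}$ to be pairwise incomparable. Indeed, ${\bf c}_i\preceq{\bf c}_j$ would give $\frak m^{{\bf c}_i}\subseteq\frak m^{{\bf c}_j}$, so the component $\frak m^{{\bf c}_j}$ could be dropped. Thus $\dim_{\mathbb K}{\rm Soc}(R/J)=r$, which combined with (1) proves the theorem.
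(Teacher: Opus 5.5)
Your argument is correct and is essentially the paper's proof. The paper states directly that ${\bf x}^{\bf a}$ is a maximal standard monomial of $I+\langle x_1^t,\ldots,x_k^t\rangle$ if and only if $\frak m^{{\bf a}(t)}$ is an irredundant irreducible component of $I$. Your two correspondences compose to exactly the same map ${\bf a}\mapsto{\bf a}(t)$: add ${\bf 1}$, then replace the entries equal to $t$ by $0$. So your distributivity and truncation steps supply the verification the paper leaves out.

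Two small slips, neither affecting the conclusion. First, ${\bf c}_i\preceq{\bf c}_j$ gives $\frak m^{{\bf c}_j}\subseteq\frak m^{{\bf c}_i}$, so it is $\frak m^{{\bf c}_i}$ that can be dropped. Second, in the truncation step the operative reason is not that the other components contain $x_\ell^t$. It is that every nonzero $a_{i\ell}\le t-1$, so lowering an exponent $\geq t$ to $t-1$ keeps it $\geq a_{i\ell}$.
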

\begin{proof} The proof is given by the Slice Algorithm, and we use this occasion to present it here. 

If ${\bf a}=(a_1,\ldots,a_k)$ is an exponent vector, define ${\bf a}(t)=(b_1,\ldots,b_k)$ with $$b_i:=\left\{
                          \begin{array}{ll}
                            a_i+1, & \hbox{if $a_i+1<t$,} \\
                            0, & \hbox{otherwise.}
                          \end{array}
                        \right.$$

Then we have ${\bf x}^{\bf a}$ is a maximum standard monomial of $I+\langle x_1^t,\ldots,x_k^t\rangle$ if and only if $\frak m^{{\bf a}(t)}$ is an irredundant irreducible component of $I$.

But, by the definition, the maximum standard monomials form a basis for the socle.
\end{proof}

\medskip 


\begin{exm}
Let $R = \mathbb K[x_1, x_2, x_3, x_4]$ and consider the monomial ideal
\[
I = \langle x_1 x_2, x_1 x_3, x_1 x_4, x_2 x_3, x_2 x_4, x_3 x_4, x_1^4, x_2^4, x_3^4, x_4^4\rangle.
\]
The degree of every minimal generator of $I$ is at most $4$, so we may take $t = 5$ in Theorem~\ref{thm_msm}. Set
\[
J := I + \langle x_1^5, x_2^5, x_3^5, x_4^5 \rangle.
\]
Since $x_i^4 \in I$ for each $i$, we have $x_i^5 \in I$ as well, and hence $J = I$.
The monomials not contained in $I$ are
\[
1, \quad x_i, \quad x_i^2, \quad x_i^3 \qquad (1 \le i \le 4).
\]
Indeed, any monomial involving at least two distinct variables belongs to $I$, while $x_i^4 \in I$. Among these standard monomials, the maximum standard monomials are precisely
\[
x_1^3, \quad x_2^3, \quad x_3^3, \quad x_4^3.
\]
Consequently,
\[
\operatorname{Soc}(R/J) =
\bigoplus_{x^\alpha\in\operatorname{MSM}(J)}
\mathbb{K}\,\overline{x^\alpha}=
\mathbb K \overline{x_1^3} \oplus \mathbb K \overline{x_2^3} \oplus \mathbb K \overline{x_3^3} \oplus \mathbb K \overline{x_4^3},
\]
and therefore $\dim_{\mathbb K} \operatorname{Soc}(R/J) = 4$. By Theorem~\ref{thm_msm}, it follows that ${\rm i}\omega(I) = 4$.
Now, we apply the Slice Algorithm. The exponent vectors of the maximum standard monomials are
\[
(3,0,0,0), \quad (0,3,0,0), \quad (0,0,3,0), \quad (0,0,0,3).
\]
Since $t = 5$, applying the map $\mathbf{a} \mapsto \mathbf{a}(5)$ yields that 
\[
\begin{aligned}
(3,0,0,0)(5) &= (4,1,1,1), \\
(0,3,0,0)(5) &= (1,4,1,1), \\
(0,0,3,0)(5) &= (1,1,4,1), \\
(0,0,0,3)(5) &= (1,1,1,4).
\end{aligned}
\]
Thus, the corresponding irreducible components are
\[
\begin{aligned}
\mathfrak m^{(4,1,1,1)} &= \langle x_1^4, x_2, x_3, x_4\rangle, \\
\mathfrak m^{(1,4,1,1)} &= \langle x_1, x_2^4, x_3, x_4\rangle, \\
\mathfrak m^{(1,1,4,1)} &= \langle x_1, x_2, x_3^4, x_4\rangle, \\
\mathfrak m^{(1,1,1,4)} &= \langle x_1, x_2, x_3, x_4^4\rangle.
\end{aligned}
\]
Hence, the algorithm gives the minimal irreducible decomposition as follows
\[
I = \langle x_1^4, x_2, x_3, x_4\rangle \cap \langle x_1, x_2^4, x_3, x_4\rangle \cap \langle x_1, x_2, x_3^4, x_4\rangle \cap \langle x_1, x_2, x_3, x_4^4\rangle.
\]
Since these four components are irredundant, we confirm that ${\rm i}\omega(I) = 4$.
\end{exm}
\medskip

\subsection{Omega invariants of Veronese type ideals.}\label{sec_Veronese} If above we looked at monomial ideals in general, below we will do a qualitative analysis of the class of Veronese type ideals.

Let $R=\mathbb K[x_1,\ldots,x_k]$ be the graded ring of polynomials with coefficients in a field, with standard grading given by the degree. Denote $\frak m:=\langle x_1,\ldots,x_k\rangle$, the irrelevant maximal ideal. 

Let $\Sigma=(\ell_1,\ldots,\ell_n)\in R$ be linear forms, some possibly proportional. The {\em rank} of $\Sigma$ is ${\rm rk}(\Sigma):={\rm ht}(\langle \ell_1,\ldots,\ell_n\rangle)$. Let $a\geq 0$ be an integer. With the convention that $I_0(\Sigma)=R$ and $I_a(\Sigma)=0$, if $a\geq n+1$, the ideal $$I_a(\Sigma):=\langle\{\ell_{i_1}\cdots\ell_{i_a}|1\leq i_1<\cdots<i_a\leq n\}\rangle\subset R$$ is called {\em the ideal generated by $a$-fold products of linear forms of $\Sigma$}.

Maybe the most important result concerning these ideals is \cite[Theorem 2.2]{BuToXi} (or \cite[Theorem 3.35]{To}) which says that for $1\leq a\leq n$ and for any $\Sigma$, the ideals $I_a(\Sigma)$ have linear free resolution and that they have a primary decomposition as follows: let $\Gamma(\Sigma):=\{\langle A \rangle | A\subseteq\Sigma\}$ be the set of all prime ideals generated by linear forms on $\Sigma$. Then, for $1\leq a\leq n$, a primary decomposition of $I_a(\Sigma)$ is
$$I_a(\Sigma)=\bigcap_{\frak p\in\Gamma(\Sigma)}\frak p^{a-n+\nu_{\Sigma}(\frak p)},$$ where $\nu_{\Sigma}(\frak p)$ is the number of linear forms of $\Sigma$ that belong to $\frak p$, counted with multiplicity.

\medskip

Veronese type ideals are a particular case of ideals generated by fold products of linear forms. Suppose $\Sigma=(\underbrace{x_1,\ldots,x_1}_{m_1},\ldots,\underbrace{x_k,\ldots,x_k}_{m_k})\subset R=\mathbb K[x_1,\ldots,x_k]$, and let $1\leq a\leq n=m_1+\cdots+m_k$. Then, $I_a(\Sigma)$ is generated by all monomials $x_1^{i_1}\cdots x_k^{i_k}$ with $i_1+\cdots+i_k=a$ and $i_j\leq m_j$ for all $j=1,\ldots,k$. Such an ideal is called {\em a Veronese type ideal}.

\begin{thm}\label{thm_Veronese} Let $I_a(\Sigma)$ be a Veronese type ideal as above. Then, $$\omega(I_a(\Sigma))=\sum_{t=1}^k|\{(i_1,\ldots,i_t)|1\leq i_1<\cdots<i_t\leq k \mbox{ and } a+m_{i_1}+\cdots+m_{i_t}>n\}|.$$
\end{thm}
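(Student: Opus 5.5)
We use the primary decomposition of \cite[Theorem 2.2]{BuToXi} quoted above, specialized to the Veronese case, and show that after discarding trivial terms it is a minimal primary decomposition whose components have pairwise distinct radicals; $\omega$ is then the number of surviving components.

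For $\Sigma=(x_1^{(m_1)},\ldots,x_k^{(m_k)})$, every prime generated by linear forms of $\Sigma$ has the form $\frak p_T:=\langle x_i\mid i\in T\rangle$ for a nonempty $T=\{i_1<\cdots<i_t\}\subseteq[k]$. Here $\nu_\Sigma(\frak p_T)=m_{i_1}+\cdots+m_{i_t}$, so the exponent is
$$c_T:=a-n+m_{i_1}+\cdots+m_{i_t}.$$
If $c_T\le 0$, the factor $\frak p_T^{c_T}$ is $R$ and can be dropped. Hence
$$I_a(\Sigma)=\bigcap_{T:\,c_T\ge1}\frak p_T^{c_T}.$$
Each $\frak p_T^{c_T}$ with $c_T\ge 1$ is $\frak p_T$-primary, since a power of an ideal generated by variables is primary (property (iv)). The radicals $\frak p_T$ are pairwise distinct for distinct $T$. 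So the number of components is exactly the count on the right-hand side of the theorem, namely the number of $T$ with $a+\sum_{i\in T}m_i>n$.

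It remains to prove irredundancy: no $\frak p_T$ with $c_T\ge1$ can be removed. By Theorem \ref{thm_essential} and Proposition \ref{prop_assoc_monom}, it suffices to produce, for each such $T$, a monomial $M$ with $(I_a(\Sigma):M)=\frak p_T$.

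The candidate, modeled on the witnesses of Theorem \ref{thm_witness_any_monomial}, is
$$M=\Big(\prod_{i\in T}x_i^{e_i}\Big)\prod_{j\notin T}x_j^{m_j},$$
where $0\le e_i\le m_i$ and $\sum_{i\in T}e_i=c_T-1$. Such $e_i$ exist because $0\le c_T-1\le \sum_{i\in T}m_i-1$; the upper bound uses $a\le n$.

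The verification goes as follows. The total $\Sigma$-capped degree of $M$, meaning $\sum_j\min(d_j(M),m_j)$, equals $(n-\sum_{i\in T}m_i)+c_T-1=a-1<a$. A monomial $N$ lies in $I_a(\Sigma)$ exactly when $\sum_j\min(d_j(N),m_j)\ge a$, so $M\notin I_a(\Sigma)$.

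For each $i\in T$ we can choose $e_i<m_i$ in the distribution above, using a slack argument when $\sum_{i\in T}e_i<\sum_{i\in T}m_i$. Then multiplying by $x_i$ raises the capped degree to $a$, and $x_iM\in I_a(\Sigma)$. This step is delicate: it requires every $e_i\le m_i-1$ simultaneously, which is possible precisely because $c_T-1\le\sum_{i\in T}(m_i-1)$. That inequality is equivalent to $a-n\le -|T|+1$, which may fail.

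So the main obstacle is this choice of exponents. If $a-n>1-|T|$, no such $M$ exists. In that case I would instead argue that $I_a(\Sigma)$ should omit $\frak p_T$, and check whether the formula is still correct there, perhaps with the paper's intended hypothesis. More likely, I would rely on \cite[Theorem 2.2]{BuToXi}, which the introduction describes as yielding a decomposition that is already minimal, and then compute $(I:M)$ for monomials $N\notin\frak p_T$.

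Either way, the final step is to show $(I_a(\Sigma):M)\subseteq\frak p_T$. If a monomial $N$ avoids all $x_i$ with $i\in T$, multiplying $M$ by $N$ only increases exponents of variables already at their cap. The capped degree therefore stays at $a-1$, and $NM\notin I_a(\Sigma)$.
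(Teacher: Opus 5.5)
Your setup is right, but the proof is not complete. The case you left open cannot be completed, because there the statement itself is false.

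Carry your capped-degree analysis one step further. Suppose a monomial $M$ has $(I_a(\Sigma):M)=\mathfrak p_T$. Then $M\notin I_a(\Sigma)$ together with $x_iM\in I_a(\Sigma)$ for $i\in T$ forces $\sum_j\min(d_j(M),m_j)=a-1$ and $d_i(M)\le m_i-1$ for every $i\in T$. Also $x_jM\notin I_a(\Sigma)$ for $j\notin T$ forces $d_j(M)\ge m_j$. Hence $\sum_{i\in T}d_i(M)=c_T-1\le\sum_{i\in T}(m_i-1)$, i.e.\ $|T|\le n-a+1$. Since monomial witnesses suffice (Proposition~\ref{prop_assoc_monom}), $\mathfrak p_T\notin{\rm Ass}(R/I_a(\Sigma))$ whenever $|T|\ge n-a+2$, even though $c_T\ge 1$.

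Two concrete counterexamples:
\begin{itemize}
\item $\Sigma=(x_1,x_2)$, $a=n=2$ gives $I_2(\Sigma)=\langle x_1x_2\rangle$ with $\omega=2$. The formula counts $\{1\},\{2\},\{1,2\}$, but the component $\langle x_1,x_2\rangle^2$ of the decomposition in \cite[Theorem 2.2]{BuToXi} is redundant.
\item $\Sigma=(x_1,x_2,x_3)$, $a=2$ gives $\langle x_1x_2,x_1x_3,x_2x_3\rangle$ with $\omega=3$, not $4$.
\end{itemize}
So your fallback of invoking minimality of that decomposition is not available. This is exactly what the paper's proof does, through its footnote and the observation that a larger prime receives a larger exponent. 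That observation does not make the larger prime associated, and the argument fails in this range.

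Your capped-degree argument (necessity as above, sufficiency by your witness $M$) actually proves, assuming all $m_i\ge 1$,
\[
\omega(I_a(\Sigma))=\Bigl|\Bigl\{\emptyset\neq T\subseteq[k] \;\Big|\; \sum_{i\in T}m_i\ge n-a+1,\ |T|\le n-a+1\Bigr\}\Bigr|.
\]
This equals the stated count minus $\sum_{t=n-a+2}^{k}\binom{k}{t}$. Thus the theorem as written holds precisely when $a\le n-k+1$, which covers the paper's worked examples. Under that hypothesis your proof is complete. You should state that hypothesis, or the corrected formula, explicitly rather than leave the case open.
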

\begin{proof} We apply \cite[Theorem 2.2]{BuToXi} that we mentioned at the beginning of this subsection. A prime ideal $\frak p$ in $\Gamma(\Sigma)$ is of the form $\frak p=\langle x_{i_1},\ldots,x_{i_t}\rangle$, with $1\leq i_1<\cdots<i_t\leq k$. In this case, clearly, $$\nu_{\Sigma}(\frak p)=m_{i_1}+\cdots+m_{i_t}.$$

If $s<n$, then $a-n+m_{i_1}+\cdots+m_{i_s}<a-n+m_{i_1}+\cdots+m_{i_t}$, and so, if $\frak p_1=\langle x_{i_1},\ldots,x_{i_s}\rangle$ is an associated prime of $I_a(\Sigma)$, then so is $\frak p_2=\langle x_{i_1},\ldots,x_{i_t}\rangle$, despite that $\frak p_1\subsetneq\frak p_2$.\footnote{This reiterates the note following \cite[Corollary 2.4]{ToXi} that for Veronese type ideals, the primary decomposition in \cite[Theorem 2.2]{BuToXi} (first obtained in \cite{ToXi} for $\Sigma$ with generic support) is a minimal primary decomposition.} 

So the claim is shown.
\end{proof}

\begin{lem}\label{lem_Veronese} Let $I_a(\Sigma)$ be a Veronese type ideal, and let ${\bf m}:=(m_1,m_2,\ldots,m_k)$. Then the Alexander dual w.r.t. ${\bf m}$ is $$I_a(\Sigma)^{[{\bf m}]}=I_{n-a+1}(\Sigma).$$
\end{lem}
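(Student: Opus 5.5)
The plan is to compute $I_a(\Sigma)^{[{\bf m}]}$ directly from the definition as an intersection of the irreducible ideals $\frak m^{{\bf m}\setminus{\bf c}}$. Here ${\bf x}^{\bf c}$ runs over the minimal generators of $I_a(\Sigma)$, namely the vectors ${\bf c}=(c_1,\ldots,c_k)$ with $|{\bf c}|=a$ and $0\leq c_j\leq m_j$. First I note that ${\bf m}\succeq {\bf c}$ for every such generator, so the Alexander dual w.r.t. ${\bf m}$ is defined. The $j$-th entry of ${\bf m}\setminus{\bf c}$ is $m_j+1-c_j$ when $c_j\geq 1$, and it is $0$ otherwise. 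Consequently
$$\frak m^{{\bf m}\setminus{\bf c}}=\langle \{x_j^{m_j-c_j+1}\mid c_j\geq 1\}\rangle.$$
A monomial ${\bf x}^{\bf e}$ lies in this ideal if and only if there is some $j$ with $c_j\geq 1$ and $e_j\geq m_j-c_j+1$, i.e. $c_j> m_j-e_j$.

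The core step is therefore a combinatorial equivalence. A monomial ${\bf x}^{\bf e}$ lies in every $\frak m^{{\bf m}\setminus{\bf c}}$ exactly when there is no admissible ${\bf c}$ (with $|{\bf c}|=a$ and $0\le c_j\le m_j$) satisfying $c_j\leq m_j-e_j$ for all $j$. Replacing ${\bf e}$ by ${\bf e}'$ with $e'_j=\min(e_j,m_j)$ does not change either membership question, since $x_j^{m_j}$ is the largest pure power that matters on both sides. So I may assume $e_j\leq m_j$. Then such a ${\bf c}$ exists if and only if $\sum_j (m_j-e_j)\geq a$, because any vector bounded coordinatewise by $m_j-e_j$ and of total $a$ can be chosen greedily. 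Hence ${\bf x}^{\bf e}\in I_a(\Sigma)^{[{\bf m}]}$ if and only if $\sum_j\min(e_j,m_j)\geq n-a+1$.

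Finally I will identify the right-hand side with $I_{n-a+1}(\Sigma)$. A monomial ${\bf x}^{\bf e}$ lies in $I_{n-a+1}(\Sigma)$ if and only if it is divisible by some ${\bf x}^{\bf d}$ with $|{\bf d}|=n-a+1$ and $d_j\le m_j$. This happens exactly when $\sum_j\min(e_j,m_j)\geq n-a+1$, again by the greedy choice of ${\bf d}$. Both ideals are monomial and contain the same monomials, so they are equal. For the edge case $a=1$, the right side is $I_n(\Sigma)=\langle x_1^{m_1}\cdots x_k^{m_k}\rangle$, and the argument still applies; for $a=n$ one obtains $I_1(\Sigma)=\frak m$ (restricted to variables with $m_j\geq1$).

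I expect the main obstacle to be the careful handling of the truncation $e_j\mapsto\min(e_j,m_j)$ and of the coordinates with $c_j=0$, which contribute no generator. The existence claim for ${\bf c}$ must respect the requirement that only coordinates with $c_j\geq1$ count. This is harmless, because the condition ``$c_j>m_j-e_j$ for some $j$ with $c_j\ge1$'' fails precisely when $c_j\le m_j-e_j$ for all $j$: for $c_j=0$ the inequality $0\le m_j-e_j$ holds automatically once $e_j\le m_j$. I will check this equivalence explicitly before concluding.
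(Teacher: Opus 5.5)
Your proof is correct, but it takes a different route from the paper. The paper never unwinds the intersection $\bigcap_{\mathbf c}\mathfrak m^{\mathbf m\setminus\mathbf c}$. Instead it uses the duality statement it cites from Miller--Sturmfels (Proposition 5.23): for $\mathbf a\preceq\mathbf m$, $\mathbf x^{\mathbf a}\in I$ if and only if $\mathbf x^{\mathbf m-\mathbf a}\notin I^{[\mathbf m]}$. Applied to the generators $\mathbf x^{\mathbf c}$ of $I_a(\Sigma)$, this shows that no $(n-a)$-fold product lies in the dual. Applied to the non-members $\mathbf x^{\mathbf c}/x_j$, which are $(a-1)$-fold products, it shows that every $(n-a+1)$-fold product $x_j\mathbf x^{\mathbf m-\mathbf c}$ does lie in the dual. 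The paper then simply asserts that the dual ``must be generated by such.'' That step tacitly uses two facts: minimal generators of $I^{[\mathbf m]}$ divide $\mathbf x^{\mathbf m}$, and any monomial below $\mathbf m$ of degree at most $n-a$ divides some $(n-a)$-fold product.

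Your explicit criterion, $\mathbf x^{\mathbf e}\in I_a(\Sigma)^{[\mathbf m]}$ if and only if $\sum_j\min(e_j,m_j)\geq n-a+1$, matched with the same criterion for $I_{n-a+1}(\Sigma)$, gives both inclusions at once. It handles monomials not dividing $\mathbf x^{\mathbf m}$ through the truncation, and it needs nothing beyond the definition of the Alexander dual. The paper's version is shorter, but it leans on the external duality result and leaves the generation step implicit.

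One small presentational fix: do the truncation $e_j\mapsto\min(e_j,m_j)$ before stating your ``core'' equivalence. For $e_j>m_j$ and $c_j=0$, the condition ``$c_j\leq m_j-e_j$ for all $j$'' is stricter than non-membership in $\mathfrak m^{\mathbf m\setminus\mathbf c}$, as you yourself note in your final paragraph.
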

\begin{proof} Let $x_1^{i_1}\cdots x_k^{i_k}\in I_a(\Sigma)$, so $i_1+\cdots+i_k=a$, and $0\leq i_j\leq m_j$, for all $j=1,\ldots,k$. 

Since $(i_1,\ldots,i_k)\preceq (m_1,\ldots,m_k)$, it follows that $x_1^{i_1}\cdots x_k^{i_k}$ divides $x_1^{m_1}\cdots x_k^{m_k}$, so by \cite[Proposition 5.23]{MiSt} we have
$$x_1^{m_1-i_1}\cdots x_k^{m_k-i_k}\not\in I_a(\Sigma)^{[{\bf m}]},$$
where $\displaystyle \sum_{j=1}^k (m_j-i_j)=n-a$. Since, any $k$-tuple $(t_1,\ldots,t_k)$ with $t_1+\cdots+t_k=n-a$ and $0\leq t_j\leq m_j$ for all $j=1,\ldots,k$ can be written as $(m_1-i_1,\ldots,m_k-i_k)$ with $i_1+\cdots+i_k=a$ and $0\leq i_j\leq m_j$ for all $j=1,\ldots,k$, we obtain that $I_a(\Sigma)^{[{\bf m}]}$ does not contain any $(n-a)$-fold products of $\Sigma$. 

On the other hand, for any $j=1,\ldots,k$ with $i_j\neq 0$,
$$\frac{x_1^{i_1}\cdots x_k^{i_k}}{x_j}\not\in I_a(\Sigma)$$
because it is an $(a-1)$-fold product, so \cite[Proposition 5.23]{MiSt} implies that
\[
    x_j(x_1^{m_1-i_i}\cdots x_k^{m_k-i_k}) \in I_a(\Sigma)^{[{\bf m}]},
\]
where $x_j(x_1^{m_1-i_i}\cdots x_k^{m_k-i_k})$ is an $(n-a+1)$-fold product of $\Sigma$. But any $(n-a+1)$-fold product is an $(n-a)$-fold product times a variable. So $I_a(\Sigma)^{[{\bf m}]}$ contains all $(n-a+1)$-fold products, and as a monomial ideal it must be generated by such. Hence $I_a(\Sigma)^{[{\bf m}]}=I_{n-a+1}(\Sigma)$.
\end{proof}


\begin{exm}
Let  $R=\mathbb K[x_1,x_2,x_3,x_4]$ and consider
$$\Sigma=(x_1,x_1,x_2,x_2,x_2,x_3,x_4,x_4).$$
Thus
\[
(m_1,m_2,m_3,m_4)=(2,3,1,2),
\qquad n=m_1+m_2+m_3+m_4=8.
\]
Let $a=5$. Then $I_5(\Sigma)$ is the Veronese type ideal
\[
I_5(\Sigma)
=
\left\langle
x_1^{i_1}x_2^{i_2}x_3^{i_3}x_4^{i_4}
:
\begin{array}{l}
i_1+i_2+i_3+i_4=5,\\
i_1\leq 2,\ i_2\leq 3,\ i_3\leq 1,\ i_4\leq 2
\end{array}
\right\rangle.
\]
It follows from  Theorem~\ref{thm_Veronese} that 
\[
\omega(I_5(\Sigma))
=
\sum_{t=1}^4
\left|
\left\{
S\subseteq\{1,2,3,4\}:
|S|=t,\quad
5+\sum_{i\in S}m_i>8
\right\}
\right|.
\]
Equivalently, we need to count the subsets $S$ such that
\[
\sum_{i\in S}m_i>3.
\]
For subsets of cardinality one, none satisfies this condition, since
\[
m_1=2,\qquad m_2=3,\qquad m_3=1,\qquad m_4=2.
\]
Hence the contribution for $t=1$ is $0$.
For subsets of cardinality two, we have
\[
\begin{array}{c|c}
S & \displaystyle\sum_{i\in S}m_i \\ \hline
\{1,2\} & 5\\
\{1,3\} & 3\\
\{1,4\} & 4\\
\{2,3\} & 4\\
\{2,4\} & 5\\
\{3,4\} & 3
\end{array}
\]
Thus, the subsets satisfying the condition are
\[
\{1,2\},\quad
\{1,4\},\quad
\{2,3\},\quad
\{2,4\},
\]
and the contribution for $t=2$ is $4$.
Every subset of cardinality three satisfies the condition, since
\[
\begin{aligned}
m_1+m_2+m_3&=6, &
m_1+m_2+m_4&=7,\\
m_1+m_3+m_4&=5, &
m_2+m_3+m_4&=6.
\end{aligned}
\]
Hence the contribution for $t=3$ is $4$.
Finally,
\[
m_1+m_2+m_3+m_4=8>3,
\]
so the unique subset of cardinality four also contributes $1$.
Consequently, we get 
\[
\omega(I_5(\Sigma))=0+4+4+1=9.
\]
More explicitly, the nine associated primes are
\[
\begin{aligned}
\operatorname{Ass}(I_5(\Sigma))
=\big\{&
\langle x_1,x_2\rangle,\ \langle x_1,x_4\rangle,\ \langle x_2,x_3\rangle,\ \langle x_2,x_4\rangle,\\
&
\langle x_1,x_2,x_3\rangle,\ \langle x_1,x_2,x_4\rangle,\
\langle x_1,x_3,x_4\rangle,\ \langle x_2,x_3,x_4\rangle,\\
&
\langle x_1,x_2,x_3,x_4\rangle
\big\}.
\end{aligned}
\]
\end{exm}

\medskip

Immediately from Lemma \ref{lem_Veronese} and Theorem \ref{thm_Alex_dual} one has the corollary:

\begin{cor}\label{cor_Veronese} With the above notations one has
$$\omega(I_a(\Sigma))\leq \mu(I_{n-a+1}(\Sigma)).$$
\end{cor}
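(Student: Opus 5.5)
The inequality follows by chaining the general bound $\omega(J)\leq {\rm i}\omega(J)$ from Section~\ref{sec_preliminaries} with Theorem~\ref{thm_Alex_dual} and Lemma~\ref{lem_Veronese}. Set $I:=I_a(\Sigma)$ and ${\bf m}:=(m_1,\ldots,m_k)$.

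First, every minimal generator $x_1^{i_1}\cdots x_k^{i_k}$ of $I$ satisfies $i_j\leq m_j$ for all $j$. Hence each minimal generator divides ${\bf x}^{\bf m}$, which is exactly the hypothesis of Theorem~\ref{thm_Alex_dual} with ${\bf b}={\bf m}$. That theorem then gives
\[
{\rm i}\omega(I)=\mu(I^{[{\bf m}]}).
\]

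Second, by Lemma~\ref{lem_Veronese}, $I^{[{\bf m}]}=I_{n-a+1}(\Sigma)$, so ${\rm i}\omega(I)=\mu(I_{n-a+1}(\Sigma))$.

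Finally, we recall ${\rm w}\omega(I)\leq\omega(I)\leq{\rm i}\omega(I)$. The second inequality holds because a minimal irreducible decomposition is a primary decomposition, and grouping components with the same radical and discarding redundant ones can only reduce the count to $\omega(I)$. This yields
\[
\omega(I_a(\Sigma))\leq \mu(I_{n-a+1}(\Sigma)).
\]

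The only points to check are the edge cases. If $a=1$, then $n-a+1=n$ and $I_n(\Sigma)=\langle x_1^{m_1}\cdots x_k^{m_k}\rangle$, so $\mu=1$; this is consistent because $I_1(\Sigma)=\frak m$ has $\omega=1$. The lemma applies for every $1\leq a\leq n$, so no further obstacle arises. One could alternatively phrase the argument through $\omega(I)=\mu{\rm supp}(I^{[{\bf b}]})\leq\mu(I^{[{\bf b}]})$ from Subsection~\ref{sec_diff_supp}, which gives the same conclusion directly.
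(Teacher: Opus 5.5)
Your proof is correct and follows the paper's route. The paper obtains the corollary immediately from Lemma~\ref{lem_Veronese} and Theorem~\ref{thm_Alex_dual} with ${\bf b}={\bf m}$, combined with $\omega\leq{\rm i}\omega$; your extra checks (every minimal generator divides ${\bf x}^{\bf m}$, and the justification of $\omega\leq{\rm i}\omega$) spell out what the paper leaves implicit.
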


By \cite[Theorem 2.2]{AbZa}, we have the following complicated formula for the minimum number of generators for the Veronese type ideal $I_{n-a+1}(\Sigma)$: if $1\leq m_i\leq n-a+1$, for all $i=1,\ldots,k$, then

$$\mu(I_{n-a+1}(\Sigma))={{n-a+k}\choose{k-1}}+\sum_{J\subseteq\{1,\ldots,k\}}(-1)^{|J|}{{n-a+k-\sum_{i\in J}(m_i+1)}\choose{k-1}}.$$

\begin{exm}\label{exm_Veronese} Let $\Sigma=(x,x,x,x,y,y,z,z)$, and let $a=4$. With \cite{GrSt} we have

$$\omega(I_4(\Sigma))=3 < 8 =\mu(I_{8-4+1}(\Sigma)).$$
\end{exm}
\medskip

\begin{prop}\label{prop_iomega_veronese} Let $I=\langle x_1,\ldots,x_s\rangle^a$, for some $s\in\{1,\ldots,k\}$ and some $a\geq 1$, an integer. Then,
    $${\rm i}\omega(I)=\sum_{i=0}^s(-1)^{i}{{s}\choose{i}}{{sa-a+s-i(a+1)}\choose{s-1}}.$$
\end{prop}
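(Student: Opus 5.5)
The plan is to view $I=\langle x_1,\ldots,x_s\rangle^a$ as a Veronese type ideal and use Lemma \ref{lem_Veronese} together with Theorem \ref{thm_Alex_dual}. This reduces ${\rm i}\omega(I)$ to counting lattice points, which inclusion--exclusion then handles.

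First, the variables $x_{s+1},\ldots,x_k$ can be ignored. The minimal generators of $I$ involve only $x_1,\ldots,x_s$, and both the Alexander dual construction and the minimal irreducible decomposition of Theorem \ref{thm_Alex_dual} only use the supports of those generators. So we may work in $\mathbb K[x_1,\ldots,x_s]$, or, equivalently, take ${\bf b}$ with $b_i=0$ for $i>s$. Next, set $\Sigma=(\underbrace{x_1,\ldots,x_1}_{a},\ldots,\underbrace{x_s,\ldots,x_s}_{a})$, so $m_1=\cdots=m_s=a$ and $n=sa$. Then $I_a(\Sigma)$ is generated by the monomials $x_1^{i_1}\cdots x_s^{i_s}$ with $\sum i_j=a$ and $i_j\le a$. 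The bound $i_j\le a$ is automatic, so $I_a(\Sigma)=\langle x_1,\ldots,x_s\rangle^a=I$.

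Every minimal generator of $I$ divides ${\bf x}^{\bf m}=x_1^a\cdots x_s^a$. Hence Theorem \ref{thm_Alex_dual} applies with ${\bf b}={\bf m}$ and gives ${\rm i}\omega(I)=\mu(I^{[{\bf m}]})$. By Lemma \ref{lem_Veronese}, $I^{[{\bf m}]}=I_{n-a+1}(\Sigma)=I_{sa-a+1}(\Sigma)$. This ideal is minimally generated by the distinct monomials $x_1^{t_1}\cdots x_s^{t_s}$ with $t_1+\cdots+t_s=d:=sa-a+1$ and $0\le t_j\le a$. All these generators have the same degree $d$ and are distinct, so none divides another, and $\mu$ equals the number of such integer vectors.

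The remaining step is the count. Without upper bounds, the number of nonnegative solutions of $t_1+\cdots+t_s=d$ is $\binom{d+s-1}{s-1}$. For a set $J\subseteq[s]$ with $|J|=i$, the solutions with $t_j\ge a+1$ for all $j\in J$ are counted, after substituting $t_j\mapsto t_j-(a+1)$, by $\binom{d-i(a+1)+s-1}{s-1}$. Inclusion--exclusion over $J$ then gives
$$\mu(I_{d}(\Sigma))=\sum_{i=0}^s(-1)^i\binom{s}{i}\binom{sa-a+s-i(a+1)}{s-1},$$
since $d+s-1=sa-a+s$. This is the claimed formula.

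The only delicate point, and the step I expect to need care, is the binomial convention. When $sa-a+s-i(a+1)<s-1$, that is, when no solutions exist, the term must be read as $0$; this has to be stated explicitly so the formula is valid as written. Alternatively one could quote the \cite[Theorem 2.2]{AbZa} formula, but its hypothesis $m_i\le n-a+1$, here $a\le sa-a+1$, fails for $s=1$. The direct count avoids this and also covers $s=1$, where the formula gives $1$, as it should, since $I=\langle x_1^a\rangle$ is irreducible.
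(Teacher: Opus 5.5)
Your proof is correct and follows the paper's route. Like the paper, you identify $I$ with $I_a(\Sigma)$, where $\Sigma$ repeats each of $x_1,\ldots,x_s$ exactly $a$ times, then apply Lemma \ref{lem_Veronese} and Theorem \ref{thm_Alex_dual} with ${\bf m}=(a,\ldots,a)$, and finally count the minimal generators of $I_{sa-a+1}(\Sigma)$.

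The only difference is the final count. The paper quotes \cite[Theorem 2.2]{AbZa}, whereas your direct inclusion--exclusion is self-contained. It also covers $s=1$, where that citation's hypothesis $a\leq sa-a+1$ fails for $a\geq 2$. It also makes explicit the convention the formula needs, namely $\binom{N}{s-1}=0$ for $N<s-1$; this matters, for example, for the $i=s$ term, whose top entry is $-a$.
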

\begin{proof}
    Let $\Sigma=(\underbrace{x_1,\ldots,x_1}_{a},\ldots,\underbrace{x_s,\ldots,x_s}_{a})$. Then $I=I_a(\Sigma)$. WLOG we can suppose that $s=k$. Hence, if ${\bf m}:=(\underbrace{a,\ldots,a,}_{s})$ by Lemma \ref{lem_Veronese}, $I^{[{\bf m}]}=I_{sa-a+1}(\Sigma)$. Next, we apply the formula from \cite[Theorem 2.2]{AbZa} mentioned above. 
\end{proof}

\section{Witnesses and omega invariants of powers of monomial ideals}
\label{Matrix_Method}

Let $I$ with $G(I)=\langle f_1,\ldots,f_s\rangle$ be a monomial ideal in $R=\mathbb K[x_1,\ldots,x_k]$, where
\[
f_j=x_1^{a_{1j}}\cdots x_k^{a_{kj}},
\qquad
j=1,\ldots,s.
\]
We denote by $A=(a_{ij})\in\mathbb N_0^{k\times s}$ the exponent matrix of $I$, defined as follows:
\[
A=
\begin{pmatrix}
a_{11} & \cdots & a_{1s}\\
\vdots & & \vdots\\
a_{k1} & \cdots & a_{ks}
\end{pmatrix}.
\]
For a vector $\lambda=(\lambda_1,\ldots,\lambda_s)^T\in\mathbb N_0^s$ with
$|\lambda|:=\lambda_1+\cdots+\lambda_s=n,$ the monomial
$f^\lambda:=f_1^{\lambda_1}\cdots f_s^{\lambda_s}$ belongs to $I^n$. Its exponent vector is
\[
\operatorname{exp}(f^\lambda)
=
\begin{pmatrix}
\displaystyle\sum_{j=1}^s a_{1j}\lambda_j\\
\vdots\\
\displaystyle\sum_{j=1}^s a_{kj}\lambda_j
\end{pmatrix}
=
A\lambda.
\]
Thus, every product of $n$  minimal generators of $I$ is represented by an
integer vector $\lambda$ satisfying $|\lambda|=n$.
This observation allows us to study associated primes of $I^n$
without explicitly computing $G(I^n)$.

We note here that in parallel and independent to our work, \cite{JMi} uses the same exponent matrix to exhibit a primary decomposition of $I$. Inspired by the quotient method (see Section \ref{sec_quotientl}), their result exhibits a primary decomposition in a non-recursive way, and it is based on the fact that if a prime ideal is associated to a monomial ideal, then it must be generated by the subsets of variables (see the beginning of Section  \ref{sec_monomial}).     

\subsection{Witness systems for powers}
\label{sec_power_witnesses}

In what follows, we will generalize Definition \ref{defn_witness_exp}, Theorem \ref{thm_witness_any_monomial}, Corollary \ref{cor_omega_any_monomial}, and also Example \ref{exm_assoc} to any power $n\geq 1$ of the monomial ideal $I$.

Let $S=\{i_1,\ldots,i_m\}\subseteq [k],$ and put $P_S=\langle x_{i_1},\ldots,x_{i_m}\rangle.$
Suppose that, for each $u=1,\ldots,m$, we choose a vector $\lambda^{(u)}\in\mathbb N_0^s$ with $|\lambda^{(u)}|=n.$
Write $b^{(u)}:=A\lambda^{(u)}.$ Thus,  $b^{(u)}$ is the exponent vector of the monomial
$f^{\lambda^{(u)}}$, and the $i$-th entry of $b^{(u)}$ is the degree of the variable $x_i$ in $f^{\lambda^{(u)}}$, i.e., with our previous notations,

$$b^{(u)}_i=d_i(f^{\lambda^{(u)}}).$$

We say that the selected vectors form an
\emph{$S$-dominant witness system} if
\begin{equation}
b^{(u)}_{i_u}>b^{(u)}_{i_v} \qquad \text{for all } v\in\{1,\ldots,m\}\setminus\{u\}.
\label{eq:power_dominance}
\end{equation}

The distinguished exponents are $t_u:=b^{(u)}_{i_u}$, where  $u=1,\ldots,m.$
For the construction of a  monomial witness, we use the
following stronger compatibility condition.

\begin{defn}
\label{def:compatible_witness}
An $S$-dominant witness system $\lambda^{(1)},\ldots,\lambda^{(m)}$
is called \emph{compatible} if, in addition to \eqref{eq:power_dominance}, it satisfies
\begin{equation}
b^{(u)}_{i_v} < b^{(v)}_{i_v}
\qquad \text{for all }u,v\in\{1,\ldots,m\}\text{ with }u\neq v.
\label{eq:cross_dominance}
\end{equation}
\end{defn}
The first condition identifies the distinguished variable in each
selected monomial, while the second condition ensures the compatibility
of these distinguished exponents in the construction of a monomial witness.

The remaining condition can be interpreted as an escape problem.
More precisely, we say that a monomial $f^\lambda\in I^n$ \emph{escapes
the proposed witness} if
\[
(A\lambda)_{i_u}<t_u
\qquad
\text{for every }u=1,\ldots,m.
\]
Thus, an escaping monomial is one whose exponent at each distinguished
variable $x_{i_u}$ is strictly smaller than the corresponding
distinguished exponent $t_u$.

Consequently, we consider the following integer feasibility system:
\begin{equation}
\begin{cases}
\lambda_1+\cdots+\lambda_s=n,\\
\lambda_j\geq0,\qquad j=1,\ldots,s,\\
(A\lambda)_{i_u}<t_u,\qquad u=1,\ldots,m.
\end{cases}
\tag{$\mathcal E_{S,n}$}
\label{eq:escape_system}
\end{equation}

We call \eqref{eq:escape_system} the
\emph{$S$-escape system}.


The preceding discussion reduces the problem of detecting associated primes
of powers of monomial ideals to the existence of compatible witness systems
together with the nonexistence of escaping products. Here is a simple example.

The preceding discussion provides a criterion for constructing a
monomial witness for $P_S$ at the power $I^n$. Namely, suppose that
$\lambda^{(1)},\ldots,\lambda^{(m)}$ form a compatible $S$-dominant
witness system, and let
\[
t_u=b^{(u)}_{i_u},\qquad u=1,\ldots,m.
\]
The associated witness monomial has the form
\[
M=x_{i_1}^{t_1-1}\cdots x_{i_m}^{t_m-1}M',
\]
where $M'$ is a monomial determined by the construction in
Theorem~\ref{thm_witness_any_monomial}.
To verify that this monomial is a witness, it is necessary to rule out
a product of $n$ minimal generators of $I$ whose exponent vector
satisfies
\[
(A\lambda)_{i_u}<t_u
\qquad\text{for all }u=1,\ldots,m.
\]
We call such a product an \emph{escaping product} for the proposed
witness. Equivalently, an escaping product is a monomial
$f^\lambda\in I^n$, with $\lambda\in\mathbb N_0^s$ and
$|\lambda|=n$, whose exponent at every distinguished variable
$x_{i_u}$ is strictly smaller than the corresponding threshold $t_u$.
Thus, the existence of an escaping product is exactly the existence of
a solution to the integer feasibility system
\[
\begin{cases}
\lambda_1+\cdots+\lambda_s=n,\\
\lambda_j\geq0,\qquad j=1,\ldots,s,\\
(A\lambda)_{i_u}<t_u,\qquad u=1,\ldots,m.
\end{cases}
\tag{$\mathcal E_{S,n}$}
\label{eq:escape_system1}
\]
Consequently, the nonexistence of a solution to
\eqref{eq:escape_system1} guarantees that no product of $n$ minimal
generators of $I$ can simultaneously have exponent less than $t_u$ at
every distinguished variable $x_{i_u}$. This is precisely the
condition needed in the construction of the monomial witness.


\begin{exm}
    Let
\[
I=\langle x_1,x_2,x_3\rangle,\qquad n=2,\qquad S=\{1,2\}.
\]
We have
\[
A=I_3,\qquad
t_1=b^{(1)}_1=2,\qquad t_2=b^{(2)}_2=2.
\]
Thus $\mathcal E_{S,2}$ is
\[
\begin{cases}
\lambda_1+\lambda_2+\lambda_3=2,\\
\lambda_1,\lambda_2,\lambda_3\geq0,\\
\lambda_1<2,\\
\lambda_2<2.
\end{cases}
\]
But, for example, $\lambda=(1,1,0)$
is a solution. Indeed,
$A\lambda=(1,1,0),$
and therefore
\[
(A\lambda)_1=1<2=t_1,\qquad
(A\lambda)_2=1<2=t_2.
\]
The corresponding monomial is
$f^\lambda=x_1x_2,$
which is an element of $I^2$ and \textbf{escapes the proposed witness}, because its $x_1$- and $x_2$-degrees are both strictly smaller than the distinguished exponents $2,2$.
So the conclusion is:
\[
\mathcal E_{S,2}\text{ has a solution, hence }
P_S\notin\operatorname{Ass}(R/I^2)
\text{ by the criterion.}
\]

\end{exm}

\medskip

Generalizing Theorem \ref{thm_witness_any_monomial}, the following theorem
makes this observation precise by providing a matrix-based criterion for
membership in $\operatorname{Ass}(R/I^n)$.


\begin{thm}{\rm (Matrix witness criterion for powers)}  \label{thm:matrix_power_witness}
Let $I \subset R=\mathbb K[x_1,\ldots,x_k]$ be a monomial ideal minimally generated by $G(I)=\{f_1,\ldots,f_s\}$, and with exponent matrix $A$. Fix $n\geq1$ and let
$S=\{i_1,\ldots,i_m\}\subseteq[k].$  Suppose there exist vectors
 $\lambda^{(1)},\ldots,\lambda^{(m)} \in\mathbb N_0^s$ with $|\lambda^{(u)}|=n$ for all $1\leq u \leq m$
 such that the associated monomials form a compatible $S$-dominant witness system. Set
 $t_u=(A\lambda^{(u)})_{i_u}.$ Assume, in addition, that $t_u\geq 1$ for all $1\leq u \leq m$.
 If the integer system $\mathcal E_{S,n}$ has no solution, then
 $P_S=\langle x_{i_1},\ldots,x_{i_m}\rangle \in \operatorname{Ass}(R/I^n).$
\end{thm}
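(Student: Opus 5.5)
The plan is to write down an explicit monomial witness, modeled on the one in Theorem~\ref{thm_witness_any_monomial}, and then check directly that $(I^n:M)=P_S$. For every $w\in[k]\setminus S$ put $D_w:=n\cdot\max\{a_{wj}\mid j\in[s]\}$. This is an upper bound for the $x_w$-degree of any product of $n$ minimal generators of $I$. Then set
\[
M:=x_{i_1}^{t_1-1}\cdots x_{i_m}^{t_m-1}\prod_{w\in[k]\setminus S}x_w^{D_w}.
\]
The hypothesis $t_u\geq 1$ is used exactly here, to make all exponents of $M$ nonnegative. Throughout we use that $I^n$ is generated by the monomials $f^\lambda$ with $\lambda\in\mathbb N_0^s$ and $|\lambda|=n$, whose exponent vectors are $A\lambda$. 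We also use that $(I^n:M)$ is again a monomial ideal, by property (ii) at the beginning of Section~\ref{sec_monomial}.

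\emph{Step 1: $P_S\subseteq (I^n:M)$.} Fix $u\in[m]$. We show that $f^{\lambda^{(u)}}$ divides $x_{i_u}M$ by comparing exponents variable by variable.
\begin{itemize}
\item At $x_{i_u}$: the exponent of $f^{\lambda^{(u)}}$ is $b^{(u)}_{i_u}=t_u$, which equals the exponent of $x_{i_u}M$.
\item At $x_{i_v}$ with $v\neq u$: the compatibility condition \eqref{eq:cross_dominance} gives $b^{(u)}_{i_v}<b^{(v)}_{i_v}=t_v$, so $b^{(u)}_{i_v}\leq t_v-1$.
\item At $x_w$ with $w\notin S$: we have $b^{(u)}_w\leq D_w$ by the choice of $D_w$.
\end{itemize}
Hence $x_{i_u}M\in I^n$ for every $u$, which gives $P_S\subseteq (I^n:M)$. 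Only the cross-dominance inequalities \eqref{eq:cross_dominance} are needed in this step.

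\emph{Step 2: $(I^n:M)\subseteq P_S$.} Since $(I^n:M)$ is a monomial ideal, it suffices to show that no monomial $N$ with $\operatorname{supp}(N)\cap S=\emptyset$ lies in $(I^n:M)$; the case $N=1$ says $M\notin I^n$. Suppose instead that $NM\in I^n$ for such an $N$. Then some $f^\lambda$ with $|\lambda|=n$ divides $NM$. Because $N$ involves no variable indexed by $S$, the exponent of $NM$ at each $x_{i_u}$ is still $t_u-1$. Therefore $(A\lambda)_{i_u}\leq t_u-1<t_u$ for all $u=1,\ldots,m$, so $\lambda$ is a solution of $\mathcal E_{S,n}$. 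This contradicts the hypothesis that the escape system has no solution. Hence every monomial of $(I^n:M)$ involves some $x_{i_u}$, and so $(I^n:M)\subseteq P_S$.

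Combining the two steps gives $(I^n:M)=P_S$, and Theorem~\ref{thm_essential} yields $P_S\in\operatorname{Ass}(R/I^n)$. The only delicate point is choosing the exponents $D_w$ outside $S$. They must be large enough that the variables outside $S$ never obstruct a divisibility in Step~1. Once that is arranged, Step~2 reduces entirely to the distinguished exponents, which is exactly what the escape system controls. The dominance condition \eqref{eq:power_dominance} is not actually needed in this argument; only compatibility, $t_u\geq1$, and the infeasibility of $\mathcal E_{S,n}$ are used.
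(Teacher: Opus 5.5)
Your proof is correct and is essentially the paper's argument. The witness $M$ is the same, since your $D_w=n\max_j a_{wj}$ coincides with the paper's $c_j=\max\{(A\lambda)_j : |\lambda|=n\}$, and both proofs check divisibility via compatibility for $P_S\subseteq(I^n:M)$ and reduce the reverse inclusion to the infeasibility of $\mathcal E_{S,n}$. Your two extra remarks, that $(I^n:M)$ is a monomial ideal so testing monomials $N$ suffices and that the dominance condition is never used, are accurate and are left implicit in the paper.
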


\begin{proof}
 For each $u=1,\ldots,m$, let $g_u=f^{\lambda^{(u)}}\in I^n.$ The exponent vector of $g_u$ is $b^{(u)}$.
For each $j\notin S$, set
\[c_j:=\max\left\{(A\lambda)_j:\lambda\in\mathbb N_0^s,\ |\lambda|=n\right\}.\]
This maximum exists because there are only finitely many vectors $\lambda\in\mathbb N_0^s$ satisfying $|\lambda|=n$.
 Define the monomial
\begin{equation}
M
=
\prod_{u=1}^m x_{i_u}^{t_u-1}
\prod_{j\notin S}x_j^{c_j}.
\label{eq:matrix_witness_M}
\end{equation}
The assumption $t_u\geq1$ guarantees that all exponents in \eqref{eq:matrix_witness_M} are nonnegative.

We first prove that $P_S\subseteq(I^n:M).$ Fix $u\in\{1,\ldots,m\}$. 

We claim that $g_u\mid x_{i_u}M.$ With the previous notation that for an arbitrary monomial $h$, $d_i(h)$ denotes the exponent of the variable $x_i$ showing up in $h$, we have $d_{i_u}(g_u)=t_u=d_{i_u}(x_{i_u}M)$.
For $v\neq u$, compatibility gives that $
d_{i_v}(g_u)
=
b^{(u)}_{i_v}
<
b^{(v)}_{i_v}
=
t_v.
$
Since all exponents are integers, we can deduce that $
b^{(u)}_{i_v}
\leq
t_v-1
=
d_{i_v}(M).
$
Finally, for $j\notin S$, we have $
d_j(g_u)
\leq
c_j
=
d_j(M).
$
Hence, $g_u\mid x_{i_u}M.$ Therefore, $x_{i_u}M\in I^n,$
and consequently $x_{i_u}\in(I^n:M).$ But this holds for every $u$, so we obtain
$P_S\subseteq(I^n:M).$

We now prove the reverse inclusion. Let $N$  be an arbitrary monomial such that  $N\notin P_S$.
 Then none of  $x_{i_1},\ldots,x_{i_m}$ divides $N$. Hence, we have $d_{i_u}(N)=0$
 for every $u$, and so we can deduce that
\begin{equation}
d_{i_u}(NM)
=
t_u-1
\qquad
\text{for every }u.
\label{eq:degree_NM}
\end{equation}
Suppose, to the contrary, that $NM\in I^n.$ Since $I^n$ is generated by products of $n$ generators of $I$, there
exists $\lambda\in\mathbb N_0^s$ with $|\lambda|=n$ such that  $f^\lambda\mid NM.$
It follows from \eqref{eq:degree_NM} that
\[
(A\lambda)_{i_u}
\leq
t_u-1
<
t_u
\qquad
\text{for every }u.
\]
Thus,  $\lambda$ is a solution of the escape system $\mathcal E_{S,n}$, contradicting the assumed infeasibility of that
system. Therefore, $NM\notin I^n$ for every monomial $N\notin P_S$. Hence, $(I^n:M)\subseteq P_S.$
Together with $P_S\subseteq(I^n:M)$, this leads to  $(I^n:M)=P_S.$
 We therefore get   $P_S\in\operatorname{Ass}(R/I^n)$, as required.
\end{proof}


The significance of Theorem~\ref{thm:matrix_power_witness} is that the
minimal generating set $G(I^n)$ is never required. A monomial in $I^n$
is represented by an integer vector $\lambda$ satisfying
$|\lambda|=n$, and its exponent vector is obtained by the matrix
multiplication $A\lambda.$ Thus, the problem of detecting an associated prime of $I^n$ is reduced
to an integer feasibility problem involving the exponent matrix of the
original ideal $I$.

The following examples illustrate how Theorem~\ref{thm:matrix_power_witness} can be applied using the exponent matrix and the associated
escape system. In particular, Example~\ref{Example-1} recovers the result established in~\cite[Proposition~3.6]{NKA}.

We first recall the following basic notions and definitions that will be
used in the next example.

Let $G=(V(G),E(G))$ be a finite simple graph. Suppose that $
V(G)=\{x_1,\ldots,x_m\}
$
and let $R=K[x_1,\ldots,x_m]$.
The \textit{edge ideal} of $G$ is the following square-free monomial ideal
\[
I(G)=\left\langle x_ix_j:\{x_i,x_j\}\in E(G)\right\rangle
\subseteq R.
\]
Thus, $I(G)$ is generated by the quadratic monomials corresponding to
the edges of $G$. In other words, for each edge
$\{x_i,x_j\}\in E(G)$, the monomial $x_ix_j$ is a generator of $I(G)$.

A \textit{vertex cover} of $G$ is a subset $C\subseteq V(G)$ such that
every edge of $G$ has at least one endpoint in $C$; equivalently, for
every $\{u,v\}\in E(G)$, we have $u\in C$ or $v\in C$. A vertex cover
$C$ is called \textit{minimal} if no proper subset of $C$ is a vertex
cover.
The \textit{cover ideal} of $G$ is the following square-free monomial ideal
\[
J=J(G)=\bigcap_{\{x_i,x_j\}\in E(G)}(x_i,x_j).
\]
Equivalently, $J(G)$ is generated by the monomials corresponding to the
minimal vertex covers of $G$. Thus, if
$C_1,\ldots,C_r$ are the minimal vertex covers of $G$, then
\[
J(G)=\langle x_{C_1},\ldots,x_{C_r}\rangle,
\qquad
x_{C_j}=\prod_{x_i\in C_j}x_i.
\]

A subset $U\subseteq V(G)$ is called an \textit{independent set} if no
two distinct vertices in $U$ are adjacent; that is,
\[
\{u,v\}\notin E(G)
\qquad\text{for all distinct }u,v\in U.
\]
Equivalently, the subgraph of $G$ induced by $U$ has no edges.


\begin{exm} \label{Example-1}
Suppose that  $G=C_{2n+1}$ denotes  the odd  cycle graph with the vertex set  $V(G)=\{x_1,\ldots,x_{2n+1}\}$  and the following  edge set
\[
E(G)=
\big\{\{x_i,x_{i+1}\}:1\leq i\leq 2n\big\}
\cup
\big\{\{x_{2n+1},x_1\}\big\}.
\]
Let $J$ denote the cover ideal of $G$ and $R=\mathbb K[x_1, \ldots, x_{2n+1}]$.  Also, let
$
G(J)=\{h_1,\ldots,h_r\},
$
where $r=\mu(J)$, and let $A$ be the $(2n+1)\times r$ exponent matrix of $J$.

 Using Theorem~\ref{thm:matrix_power_witness}, we show that
  $\langle x_1,\ldots,x_{2n+1}\rangle\in\operatorname{Ass}(R/J^s)$ for all $s\geq 2$.  Consider the independent set
 $U:=\{x_1,x_3,\ldots,x_{2n-1}\}.$  Put
\[
C:=V(G)\setminus U
=\{x_2,x_4,\ldots,x_{2n},x_{2n+1}\}.
\]
Thus, $|U|=n$ and $|C|=n+1$. For each $i\in\{1,\ldots,2n+1\}$, the graph $G\setminus\{x_i\}$ is a path on $2n$ vertices. Hence its vertex set
can be partitioned into two independent sets of cardinality $n$. Choose such a partition
\[
V(G)\setminus\{x_i\}=A_i\sqcup B_i,
\]
and define
\[
D_i=V(G)\setminus A_i,
\qquad
E_i=V(G)\setminus B_i.
\]
Both $D_i$ and $E_i$ are minimal vertex covers of $G$, and $D_i\cap E_i=\{x_i\}.$
Let $d_i$ and $e_i$ denote the corresponding vertex-cover monomials.
Accordingly,  $\deg_{x_i}(d_ie_i)=2$ and $\deg_{x_j}(d_ie_i)=1$, where $j\neq i$.  Let
\[
c:=\prod_{x_j\in C}x_j
=x_2x_4\cdots x_{2n}x_{2n+1}.
\]
Since $C$ is a minimal vertex cover of $G$, we have $c\in G(J)$.
Fix $s\geq2$. For each $i$, define $g_i=d_ie_i c^{s-2}\in J^s.$
Let $b^{(i)}$ be the exponent vector of $g_i$. We claim that
\[
b^{(i)}_i=
\begin{cases}
2,&x_i\in U,\\[2mm]
s,&x_i\in C,
\end{cases}
\]
and, for $i\neq j$, we have
\[
b^{(i)}_j<
\begin{cases}
2,&x_j\in U,\\[2mm]
s,&x_j\in C.
\end{cases}
\]
Indeed, if $x_i\in U$, then $x_i\notin C$, and hence $b^{(i)}_i=2.$ Also, for $j\neq i$, we get
\[
b^{(i)}_j=
\begin{cases}
1,&x_j\in U,\\
s-1,&x_j\in C.
\end{cases}
\]
If $x_i\in C$, then $x_i$ occurs once in each of $d_i$ and $e_i$
and once in each copy of $c$, so $b^{(i)}_i=s.$ For $j\neq i$, we obtain
\[
b^{(i)}_j=
\begin{cases}
1,&x_j\in U,\\
s-1,&x_j\in C.
\end{cases}
\]
Consequently, if we put
\[
t_i=
\begin{cases}
2,&x_i\in U,\\[2mm]
s,&x_i\in C,
\end{cases}
\]
then $b^{(i)}_i=t_i$ and $b^{(i)}_j<t_j$ for all $i\neq j.$ This implies that $g_1,\ldots,g_{2n+1}$
form a compatible $S$-dominant witness system for $S=\{1,\ldots,2n+1\}.$

It remains to prove that the corresponding escape system has no
solution. Suppose, on the contrary, that there exists $\lambda\in\mathbb N_0^r$ with $|\lambda|=s$ such that
\[
(A\lambda)_i<t_i
\qquad
\text{for all }i=1,\ldots,2n+1.
\]
The vector $\lambda$ represents a product of $s$ minimal generators of $J$.
Every minimal vertex cover of $C_{2n+1}$ has at least $n+1$
vertices. Hence, if the selected minimal vertex covers are
$Q_1,\ldots,Q_s$, then we have
\[
\sum_{\ell=1}^s |Q_\ell|\geq s(n+1).
\]
Therefore, if  $q_i=s-(A\lambda)_i,$ then 
\begin{equation}
\sum_{i=1}^{2n+1}q_i
=
s(2n+1)-\sum_{\ell=1}^s|Q_\ell|
\leq
s(2n+1)-s(n+1)=ns.\label{inequality.1}
\end{equation} Note that $q_i$ is the number of the $s$ selected vertex covers which do not contain $x_i$. 

On the other hand, for $x_i\in U$, the inequality $(A\lambda)_i<t_i=2$ gives $(A\lambda)_i\leq1,$
and hence $q_i\geq s-1.$  Since $|U|=n$, we obtain
\begin{equation}
\sum_{x_i\in U}q_i\geq n(s-1). \label{inequality.2}
\end{equation}
For $x_i\in C$, the inequality $(A\lambda)_i<t_i=s$ gives $(A\lambda)_i\leq s-1,$ and consequently $q_i\geq1.$
Since $|C|=n+1$, we can deduce that
\begin{equation}
\sum_{x_i\in C}q_i\geq n+1. \label{inequality.3}
\end{equation}
Combining (\ref{inequality.2}) and (\ref{inequality.3}), we can conclude that
\[
\sum_{i=1}^{2n+1}q_i
\geq n(s-1)+(n+1)
=ns+1,
\]
which contradicts (\ref{inequality.1}). Thus,  the escape system $\mathcal E_{S,s}$ has no solution.
Due to  $t_i\geq1$ for each $i$, Theorem~\ref{thm:matrix_power_witness} implies that
$\langle x_1,\ldots,x_{2n+1}\rangle\in\operatorname{Ass}(R/J^s)$ for all $s\geq 2$, as claimed.
\end{exm}


We now present an example for a non-square-free monomial ideal, revisiting Example 4.7 in \cite{CNQ}.

\begin{exm} \label{Example-2} 
Assume  $I=\langle x^{2m+1}z,x^my^4,x^{m+1}y^2,y^{2m+1}z\rangle$
with $m\geq2$ is a monomial ideal in $R=\mathbb K[x,y,z]$, and let
 $\mathfrak{m}=\langle x,y,z\rangle.$ Using Theorem~\ref{thm:matrix_power_witness}, we show that
$\mathfrak{m}\in\operatorname{Ass}(R/I^s)$ for all $s\geq m$.
Let $G(I)=\{f_1,f_2,f_3,f_4\},$  where
\[
f_1=x^{2m+1}z,\qquad
f_2=x^my^4,\qquad
f_3=x^{m+1}y^2,\qquad
f_4=y^{2m+1}z.
\]
 Then  the  exponent matrix of $I$  is given by
\[
A=
\begin{pmatrix}
2m+1&m&m+1&0\\
0&4&2&2m+1\\
1&0&0&1
\end{pmatrix}.
\]

The witness vectors are chosen by first constructing a witness for $y$ and then modifying the corresponding product to obtain a witness for $x$. Indeed, since
\[
f_2=x^my^4,\qquad f_3=x^{m+1}y^2,
\]
replacing one copy of $f_2$ by one copy of $f_3$ increases the $x$-exponent by $1$ while decreasing the $y$-exponent by $2$. We therefore first consider
\[
g_y=f_2^m f_4^{s-m},
\]
which is a product of exactly $s$ generators of $I$. Replacing one copy of $f_2$ by $f_3$ then gives
\[
g_x=f_2^{m-1}f_3f_4^{s-m}.
\]
To construct a witness for $z$, we use one additional copy of $f_4$.
To keep the $x$- and $y$-exponents below the corresponding
distinguished exponents, we take $m-1$ copies of $f_3$. Thus, we set
\[
g_z=f_3^{m-1}f_4^{s-m+1}.
\]

 Fix  $s\geq m$.  All three products contain exactly $s$ factors.  Consequently, their multiplicity vectors are
\[
\lambda^{(x)}
=
\begin{pmatrix}
0\\
m-1\\
1\\
s-m
\end{pmatrix},
\qquad
\lambda^{(y)}
=
\begin{pmatrix}
0\\
m\\
0\\
s-m
\end{pmatrix},
\qquad
\lambda^{(z)}
=
\begin{pmatrix}
0\\
0\\
m-1\\
s-m+1
\end{pmatrix}.
\]
Since $s\geq m$, all three vectors belong to $\mathbb{N}_0^4$, and  it is also easy to check that
\[
\left|\lambda^{(x)}\right|
=
\left|\lambda^{(y)}\right|
=
\left|\lambda^{(z)}\right|
=s.
\]
The corresponding monomials are as follows
$$g_x=f^{\lambda^{(x)}}=f_2^{m-1}f_3f_4^{s-m}, g_y=f^{\lambda^{(y)}}=f_2^mf_4^{s-m},
 g_z=f^{\lambda^{(z)}}=f_3^{m-1}f_4^{s-m+1}.$$
In particular, it  is straightforward to verify that
\[
\begin{aligned}
g_x
&=(x^my^4)^{m-1}
  (x^{m+1}y^2)
  (y^{2m+1}z)^{s-m}\\
&=x^{m(m-1)+m+1}
  y^{4(m-1)+2+(2m+1)(s-m)}
  z^{s-m}\\
&=x^{m^2+1}
  y^{4m-2+(2m+1)(s-m)}
  z^{s-m}.
\end{aligned}
\]
Similarly, one can easily see that $g_y
=x^{m^2}
y^{4m+(2m+1)(s-m)}
z^{s-m},$ and
$$g_z
=x^{m^2-1}
y^{4m-1+(2m+1)(s-m)}
z^{s-m+1}.$$
Therefore, their exponent vectors are as follows
\[
b^{(x)}
=A\lambda^{(x)}
=
\begin{pmatrix}
m^2+1\\
4m-2+(2m+1)(s-m)\\
s-m
\end{pmatrix},
\]
\[
b^{(y)}
=A\lambda^{(y)}
=
\begin{pmatrix}
m^2\\
4m+(2m+1)(s-m)\\
s-m
\end{pmatrix},
\]
and
\[
b^{(z)}
=A\lambda^{(z)}
=
\begin{pmatrix}
m^2-1\\
4m-1+(2m+1)(s-m)\\
s-m+1
\end{pmatrix}.
\]
Set  $t_x=m^2+1,$ $t_y=4m+(2m+1)(s-m),$ and $t_z=s-m+1.$ Then
\[
b_x^{(x)}=t_x,\qquad
b_y^{(y)}=t_y,\qquad
b_z^{(z)}=t_z.
\]
Moreover, $b_y^{(x)} =4m-2+(2m+1)(s-m)<t_y$ and $b_z^{(x)}=s-m<t_z.$ Also,
 $b_x^{(y)}=m^2<m^2+1=t_x$  and  $b_z^{(y)}=s-m<s-m+1=t_z.$ Finally, one can see
 $b_x^{(z)}=m^2-1<m^2+1=t_x$ and $b_y^{(z)}=4m-1+(2m+1)(s-m)<t_y.$
  Hence,  we can deduce that
 \[
 \deg_x(g_x)>\deg_x(g_y),\deg_x(g_z),
\]
\[
\deg_y(g_y)>\deg_y(g_x),\deg_y(g_z),
\]
\[
\deg_z(g_z)>\deg_z(g_x),\deg_z(g_y).
\]
Therefore, $g_x,g_y,g_z$ form a compatible $\{1,2,3\}$-dominant witness system.

It remains to show that the corresponding escape system has no integer
solution. Suppose, on the contrary, that there exists $\lambda=(a,b,c,d)^T\in\mathbb{N}_0^4$
with $a+b+c+d=s$  such that
\[
(A\lambda)_x<t_x,\qquad
(A\lambda)_y<t_y,\qquad
(A\lambda)_z<t_z.
\]
Since all entries are integers, we obtain
\[
(2m+1)a+mb+(m+1)c\leq m^2,
\]
\[
4b+2c+(2m+1)d
\leq
4m+(2m+1)(s-m)-1, \text{ and }
\]
\[
a+d\leq s-m.
\]
The last inequality gives that $b+c=s-a-d\geq m.$ Consequently, we get
\[
\begin{aligned}
m^2
&\geq (2m+1)a+mb+(m+1)c\\
&=(2m+1)a+m(b+c)+c\\
&\geq (2m+1)a+m^2+c.
\end{aligned}
\]
It follows that  $a=c=0.$  Since $b+c\geq m$, we obtain $b\geq m.$ On the other hand, the first inequality gives
 $mb\leq m^2,$ and hence $b\leq m.$ Therefore, $b=m.$ Since $a+b+c+d=s,$ we obtain $d=s-m.$ This implies that
\[
\begin{aligned}
(A\lambda)_y
&=4b+2c+(2m+1)d\\
&=4m+(2m+1)(s-m)\\
&=t_y,
\end{aligned}
\]
which contradicts $(A\lambda)_y<t_y.$ Thus, the escape system has no solution.
Finally, we note that
\[
t_x=m^2+1\geq1,
\]
\[
t_y=4m+(2m+1)(s-m)\geq1, \text{  and  }
\]
\[
t_z=s-m+1\geq1.
\]
Hence, by Theorem~\ref{thm:matrix_power_witness}, we get $\mathfrak{m}=\langle x,y,z\rangle\in\operatorname{Ass}(R/I^s)$
for all $s\geq m$.
\end{exm}


\subsubsection{A lower bound for the omega invariant}
\label{sec_power_lower_bound}

For a fixed $n$, let $\mathcal W_n(I)$ denote the collection of subsets
$S\subseteq[k]$ for which the matrix witness criterion of
Theorem~\ref{thm:matrix_power_witness} succeeds. Thus, $S\in\mathcal W_n(I)$
if there exists a compatible $S$-dominant witness system satisfying
$t_u\geq1$ for all $u$, such that the corresponding escape system is
infeasible. Hence,  we immediately obtain the following result, generalizing Corollary \ref{cor_omega_any_monomial}.

\begin{cor} \label{cor:omega_power_lower_bound}
Let $I\subseteq R=\mathbb K[x_1,\ldots,x_k]$ be a monomial ideal. Then, for all $n\geq1$, we have
$$|\mathcal W_n(I)| \leq\omega(I^n).$$
\end{cor}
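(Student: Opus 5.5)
The plan is to build an injective map $\mathcal W_n(I)\to{\rm Ass}(R/I^n)$, namely $S\mapsto P_S=\langle x_i\mid i\in S\rangle$. Then $|\mathcal W_n(I)|\leq|{\rm Ass}(R/I^n)|$. By the definition of $\omega$ in Section \ref{sec_preliminaries}, $\omega(I^n)$ is the length of a minimal primary decomposition of $I^n$, and by the uniqueness property (i) this equals $|{\rm Ass}(R/I^n)|$. So the inequality follows once the map is shown to be well defined and injective.

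For well-definedness, fix $S=\{i_1,\ldots,i_m\}\in\mathcal W_n(I)$. By the definition of $\mathcal W_n(I)$ there are vectors $\lambda^{(1)},\ldots,\lambda^{(m)}\in\mathbb N_0^s$ with $|\lambda^{(u)}|=n$ that form a compatible $S$-dominant witness system. They satisfy $t_u=(A\lambda^{(u)})_{i_u}\geq 1$ for all $u$, and the escape system $\mathcal E_{S,n}$ is infeasible. These are exactly the hypotheses of Theorem \ref{thm:matrix_power_witness}. That theorem gives $P_S\in{\rm Ass}(R/I^n)$, with the explicit monomial witness $M$ of \eqref{eq:matrix_witness_M} satisfying $(I^n:M)=P_S$.

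For injectivity, note that distinct subsets $S\neq S'$ of $[k]$ give distinct ideals $P_S\neq P_{S'}$. The reason is that a variable $x_j$ lies in the prime ideal generated by the variables indexed by $S$ if and only if $j\in S$. This can be checked by setting the variables in $S$ to zero: every element of $P_S$ maps to zero, but $x_j$ does not when $j\notin S$.

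There is no real obstacle here; the work was already done in Theorem \ref{thm:matrix_power_witness}. The only point needing care is the edge case $S=\emptyset$, where $P_\emptyset=0$. Here I would observe that for $m=0$ the escape system $\mathcal E_{\emptyset,n}$ has no inequality constraints on $A\lambda$. It therefore always has a solution, for instance $\lambda=(n,0,\ldots,0)^T$. Hence $\emptyset\notin\mathcal W_n(I)$, and every $S$ counted is nonempty.
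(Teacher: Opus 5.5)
Your proof is correct and is essentially the paper's argument: Theorem~\ref{thm:matrix_power_witness} sends each $S\in\mathcal W_n(I)$ to $P_S\in{\rm Ass}(R/I^n)$, distinct subsets give distinct monomial primes, and $\omega(I^n)=|{\rm Ass}(R/I^n)|$. Your separate treatment of $S=\emptyset$ does not appear in the paper and does no harm; note that it implicitly assumes $I\neq 0$, so that $s\geq 1$ and $\lambda=(n,0,\ldots,0)^T$ exists.
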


\begin{proof}

According to  Theorem~\ref{thm:matrix_power_witness}, every $S\in\mathcal W_n(I)$ determines  an associated prime
 $P_S\in\operatorname{Ass}(R/I^n).$ Note that different subsets of $[k]$ give different monomial prime ideals.
Therefore, $|\mathcal W_n(I)|
\leq
|\operatorname{Ass}(R/I^n)|
=
\omega(I^n).
$
\end{proof}

Thus, the exponent matrix of $I$ provides a computable lower bound for
the omega invariant of every power $I^n$.


In the following example we  illustrate how the escape system can show that a natural
candidate witness fails for a power.

\begin{exm} \label{Example-3} Consider the following monomial ideal
\[
I=\langle x_1^3x_2,\;x_1x_2^3,\;x_1^3x_3^2,\;
x_1^3x_3x_4,\;x_1^3x_4^2\rangle
\subseteq R=\mathbb K[x_1,x_2,x_3,x_4].
\]
Then the exponent matrix is given by
\[
A=
\begin{pmatrix}
3&1&3&3&3\\
1&3&0&0&0\\
0&0&2&1&0\\
0&0&0&1&2
\end{pmatrix}.
\]
Consider the prime $P=\langle x_1,x_2\rangle$. The first two generators have
exponent vectors $(3,1,0,0)$ and $(1,3,0,0),$ respectively. Thus, for $I$, the vectors
\[
\lambda^{(1)}
=
\begin{pmatrix}
1\\
0\\
0\\
0\\
0
\end{pmatrix}
\quad \text{ and } \quad
\lambda^{(2)}
=
\begin{pmatrix}
0\\
1\\
0\\
0\\
0
\end{pmatrix}
\]
form a compatible $\{1,2\}$-dominant witness system, with distinguished
exponents  $t_{x_1}=3$ and $t_{x_2}=3.$  We now ask whether this witness system remains effective for $I^n$.
The natural extension of this witness system to $I^n$ is given by $g_1=f_1^n$ and  $g_2=f_2^n.$
Their corresponding multiplicity vectors are
\[
\lambda^{(1)}_n=
\begin{pmatrix}
n\\
0\\
0\\
0\\
0
\end{pmatrix}
\quad \text{ and } \quad
\lambda^{(2)}_n=
\begin{pmatrix}
0\\
n\\
0\\
0\\
0
\end{pmatrix}.
\]
Thus, the distinguished exponents for this extended witness system are
 $t_{x_1}^{(n)}=3n$ and $t_{x_2}^{(n)}=3n.$ For $I^n$, a product of $n$ minimal generators has the form
 $f_1^a f_2^b f_3^c f_4^d f_5^e$ with $a+b+c+d+e=n.$ Its exponent vector is
\[
A
\begin{pmatrix}
a\\
b\\
c\\
d\\
e
\end{pmatrix}
=
\begin{pmatrix}
3a+b+3c+3d+3e\\
a+3b\\
2c+d\\
d+2e
\end{pmatrix}.
\]
Therefore, the escape system associated to this extended witness
system is
\begin{equation}
\begin{cases}
3a+b+3c+3d+3e<3n,\\
a+3b<3n,\\
a+b+c+d+e=n,\\
a,b,c,d,e\geq0.
\end{cases}
\label{61}
\end{equation}
Using $a+b+c+d+e=n$, the first inequality in \eqref{61} becomes $3n-2b<3n,$
and so $b>0$. Similarly, the second inequality becomes $3n-2a-3c-3d-3e<3n,$
and consequently $a+c+d+e>0.$ For every $n\geq2$, the following choice
\[
a=1,\qquad b=1,\qquad c=n-2,\qquad d=e=0
\]
gives an integer solution. The corresponding product is $f_1f_2f_3^{\,n-2},$ whose exponent vector is
 $(3n-2,\;4,\;2n-4,\;0).$ Both the $x_1$- and $x_2$-coordinates are strictly below the
corresponding distinguished exponents $3n$ for every $n\geq2$.
Thus, the escape system has an integer solution for every $n\geq2$.
Consequently, this particular extension of the witness system for $I$
does not remain effective for $I^n$ when $n\geq2$.

This example illustrates that a witness system for $I$ need not remain
effective for $I^n$. The matrix method detects this phenomenon without
computing the minimal generating set of $I^n$. In particular, the
existence of an escaping product does not imply that $\langle x_1,x_2\rangle\notin\operatorname{Ass}(R/I^n);$
it only shows that this particular witness system fails to certify the
associatedness of $\langle x_1,x_2\rangle$ for $I^n$.
\end{exm}



\subsubsection{The escape region}
\label{sec_escape_region}

The integer system \eqref{eq:escape_system} has a useful geometric
interpretation. For fixed $S$, $n$, and a chosen witness system, define
\[
\mathcal P_{S,n}
=
\left\{
\lambda\in\mathbb R_{\geq0}^s:
\mathbf 1^T\lambda=n,\,
(A\lambda)_{i_u}<t_u
\text{ for all }u
\right\},
\]
where 
$\mathbf{1}^T= (1,1,\ldots,1).$
We call $\mathcal P_{S,n}$ the
\emph{escape region} associated to the selected witness system. 

\begin{cor}
\label{cor:empty_escape_omega}
Let $I$ be a monomial ideal. Suppose that for distinct subsets
$S_1,\ldots,S_r\subseteq[k]$ there exist compatible
$S_i$-dominant witness systems such that $\mathcal P_{S_i,n}=\varnothing$ for $i=1,\ldots,r.$ Then
 $r\leq \omega(I^n).$
\end{cor}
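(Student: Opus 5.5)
The plan is to reduce Corollary~\ref{cor:empty_escape_omega} to Theorem~\ref{thm:matrix_power_witness}, and then to argue as in Corollary~\ref{cor:omega_power_lower_bound}.

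The first step compares the real escape region with the integer escape system. Fix $i\in[r]$ and the chosen compatible $S_i$-dominant witness system. Every integer solution $\lambda$ of $\mathcal E_{S_i,n}$ is a vector in $\mathbb N_0^s\subseteq\mathbb R_{\geq0}^s$. It satisfies $\mathbf 1^T\lambda=n$ and $(A\lambda)_{i_u}<t_u$ for all $u$. Hence the set of integer solutions of $\mathcal E_{S_i,n}$ is contained in $\mathcal P_{S_i,n}$. So if $\mathcal P_{S_i,n}=\varnothing$, the integer system $\mathcal E_{S_i,n}$ has no solution.

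The second step checks the remaining hypothesis $t_u\geq1$ of Theorem~\ref{thm:matrix_power_witness}. This is the only point where some care is needed, because the corollary does not list it explicitly. I would first read the hypotheses as including it, in line with the definition of $\mathcal W_n(I)$.

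Alternatively, $t_u\geq1$ can be derived whenever $|S_i|\geq 2$. By the dominance condition \eqref{eq:power_dominance}, $t_u=b^{(u)}_{i_u}>b^{(u)}_{i_v}\geq0$ for any $v\neq u$, so $t_u\geq1$. The case $|S_i|=1$ needs a separate argument. There the region is defined by $(A\lambda)_{i_1}<t_1$ only. If $t_1=0$, the condition $(A\lambda)_{i_1}<0$ has no solutions, so $\mathcal P_{S_i,n}=\varnothing$ holds vacuously. But then $f^{\lambda^{(1)}}$ has no $x_{i_1}$ factor, and $P_{S_i}$ need not be associated. So in that degenerate case one must assume $t_u\geq1$ explicitly, as in the definition of $\mathcal W_n(I)$, and I would state this in the proof.

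The third step applies Theorem~\ref{thm:matrix_power_witness}. With the escape system infeasible and $t_u\geq1$, the theorem gives $P_{S_i}\in\operatorname{Ass}(R/I^n)$ for each $i$. Distinct subsets $S_i$ give distinct monomial primes $P_{S_i}$. Therefore $\{P_{S_1},\ldots,P_{S_r}\}$ is a set of $r$ distinct associated primes of $I^n$, and $r\leq\omega(I^n)$.

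Equivalently, the first two steps show that each $S_i$ lies in $\mathcal W_n(I)$, so $r\leq|\mathcal W_n(I)|$. Corollary~\ref{cor:omega_power_lower_bound} then gives $|\mathcal W_n(I)|\leq\omega(I^n)$.
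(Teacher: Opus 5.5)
Your proposal is correct and follows the paper's proof. Emptiness of $\mathcal P_{S_i,n}$ forces the integer system $\mathcal E_{S_i,n}$ to be infeasible, Theorem~\ref{thm:matrix_power_witness} then gives $P_{S_i}\in\operatorname{Ass}(R/I^n)$, and distinct $S_i$ yield distinct monomial primes.

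Your extra care about $t_u\geq 1$ is warranted, because the paper's proof invokes the theorem without checking this hypothesis. As you say, it follows from \eqref{eq:power_dominance} when $|S_i|\geq 2$, but for singletons it genuinely must be added. For example, take $I=\langle x_1,x_2\rangle$ and $n=1$, with $S_1=\{1\}$ using the witness vector $e_2$ and $S_2=\{2\}$ using $e_1$. Both systems have $t=0$ and empty escape regions, yet $r=2>\omega(I)=1$.
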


\begin{proof}
For each $i$, the emptiness of $\mathcal P_{S_i,n}$ implies that $\mathcal P_{S_i,n}\cap\mathbb N_0^s=\varnothing.$
Hence, by Theorem~\ref{thm:matrix_power_witness}, we obtain $P_{S_i}\in\operatorname{Ass}(R/I^n).$
 Since the subsets $S_1,\ldots,S_r$ are distinct, the prime ideals
$P_{S_1},\ldots,P_{S_r}$ are distinct. Therefore, we get
\[
r\leq
|\operatorname{Ass}(R/I^n)|
=
\omega(I^n).
\]
\end{proof}





\subsection{A matrix extension criterion}

\label{sec:matrix_extension}

The matrix witness criterion also provides a mechanism for lifting
certified supports from one power to the next.  The basic idea is that
a witness system for $I^m$ may be extended to a witness system for
$I^{m+1}$ by adding the same generator of $I$ to each component of the
witness system. In what follows, we  formulate the corresponding matrix condition.

\begin{defn} \label{def:extension_column}
Let $S=\{i_1,\ldots,i_r\}\in\mathcal W_m(I)$ and let
 $\lambda^{(1)},\ldots,\lambda^{(r)}
\in\mathbb N_0^s$ be a compatible $S$-dominant witness system for $I^m$.  Put
\[
b^{(u)}=A\lambda^{(u)}
\qquad\text{and}\qquad
t_u=b^{(u)}_{i_u}.
\]

A column $A_{\bullet q}$ of $A$ is called an
\emph{extension column} for this witness system if the following two
conditions hold:
\begin{equation}
b^{(u)}_{i_u}+a_{i_uq}
>
b^{(u)}_{i_v}+a_{i_vq}
\qquad
\text{for all }u\neq v,
\tag{E1}
\label{eq:extension_dominance}
\end{equation}
and the integer system
\begin{equation}
\begin{cases}
\mu\in\mathbb N_0^s,\\
|\mu|=m+1,\\
(A\mu)_{i_u}<t_u+a_{i_uq},
\qquad u=1,\ldots,r
\end{cases}
\tag{$\mathcal E^{(q)}_{S,m+1}$}
\label{eq:extension_escape}
\end{equation}
has no solution.
\end{defn}

The first condition ensures that adding the same generator $f_q$ to
each component preserves the required dominance relations.  The second
condition is precisely the escape condition required for the lifted
witness system.

\begin{thm}[Matrix Extension Theorem]\label{thm:matrix_extension}
Let $I=\langle f_1,\ldots,f_s\rangle\subseteq R=\mathbb K[x_1,\ldots,x_k]$
be a monomial ideal with exponent matrix $A=(a_{ij})\in\mathbb N_0^{k\times s}.$
Fix $m\geq1$ and let $S=\{i_1,\ldots,i_r\}\subseteq[k].$ Suppose that
 $S\in\mathcal W_m(I)$ and let  $\lambda^{(1)},\ldots,\lambda^{(r)}$
be a compatible $S$-dominant witness system for $I^m$.  Put
 $b^{(u)}=A\lambda^{(u)}$ and $t_u=b^{(u)}_{i_u}.$
If $A_{\bullet q}$ is an extension column for this witness system,
then $S\in\mathcal W_{m+1}(I).$
More precisely, if $e_q$ denotes the $q$-th unit vector of
$\mathbb N_0^s$, then $\lambda^{(u)}+e_q,$ where
$u=1,\ldots,r,$ is a compatible $S$-dominant witness system for $I^{m+1}$.
\end{thm}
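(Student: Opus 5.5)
Set $\mu^{(u)}:=\lambda^{(u)}+e_q$ for $u=1,\ldots,r$ and check directly that these vectors satisfy every hypothesis of Theorem~\ref{thm:matrix_power_witness} for the exponent $m+1$; that theorem then gives $P_S\in\operatorname{Ass}(R/I^{m+1})$, which is what membership $S\in\mathcal W_{m+1}(I)$ means. First, each $\mu^{(u)}\in\mathbb N_0^s$ and $|\mu^{(u)}|=|\lambda^{(u)}|+1=m+1$. By linearity,
\[
\tilde b^{(u)}:=A\mu^{(u)}=b^{(u)}+A_{\bullet q},\qquad\text{so}\qquad \tilde b^{(u)}_i=b^{(u)}_i+a_{iq}\ \text{for all } i.
\]
Hence the new distinguished exponents are $\tilde t_u=t_u+a_{i_uq}$.

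Next I verify the conditions one at a time. The dominance condition \eqref{eq:power_dominance} for $\mu^{(1)},\ldots,\mu^{(r)}$ reads $b^{(u)}_{i_u}+a_{i_uq}>b^{(u)}_{i_v}+a_{i_vq}$ for $v\neq u$, and this is exactly (E1). For compatibility \eqref{eq:cross_dominance}, the original system gives $b^{(u)}_{i_v}<b^{(v)}_{i_v}$, and adding $a_{i_vq}$ to both sides yields $\tilde b^{(u)}_{i_v}<\tilde b^{(v)}_{i_v}$. The shift is the same $a_{i_vq}$ on both sides because it depends only on the row index $i_v$, not on $u$. For positivity, $S\in\mathcal W_m(I)$ guarantees $t_u\geq1$, so $\tilde t_u=t_u+a_{i_uq}\geq t_u\geq 1$.

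Finally, the escape system $\mathcal E_{S,m+1}$ for the new system asks for $\mu\in\mathbb N_0^s$ with $|\mu|=m+1$ and $(A\mu)_{i_u}<\tilde t_u=t_u+a_{i_uq}$ for all $u$. This is literally the system $\mathcal E^{(q)}_{S,m+1}$, which has no solution by the definition of an extension column. Theorem~\ref{thm:matrix_power_witness} therefore applies.

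There is no real obstacle here; the only point needing care is compatibility, where one should observe that the added column shifts both sides of each inequality by the same amount. One also has to confirm that the infeasibility required in $\mathcal W_{m+1}(I)$ is taken with respect to the new thresholds $\tilde t_u$ rather than the old $t_u$. That is exactly why the definition of an extension column uses the thresholds $t_u+a_{i_uq}$.
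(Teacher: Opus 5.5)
Your proof is correct and follows the paper's argument essentially step for step: you shift each $\lambda^{(u)}$ by $e_q$, read off dominance from (E1), get compatibility by adding the same $a_{i_vq}$ to both sides, note $t_u+a_{i_uq}\geq t_u\geq 1$, and identify the new escape system with $\mathcal E^{(q)}_{S,m+1}$ before invoking Theorem~\ref{thm:matrix_power_witness}. One small point, which applies equally to the paper: $S\in\mathcal W_m(I)$ by itself only guarantees $t_u\geq 1$ for \emph{some} certifying witness system, not obviously for the given one; it does hold for the given one, because dominance forces $t_u\geq 1$ when $r\geq 2$, and when $r=1$ the prime $\langle x_{i_1}\rangle$ contains $I^m$, so $x_{i_1}$ divides every $f_j$ and hence $t_1\geq m$.
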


\begin{proof}
For each $u=1,\ldots,r$, define  $\lambda'^{(u)}:=\lambda^{(u)}+e_q.$ Since
 $|\lambda^{(u)}|=m,$ we have  $|\lambda'^{(u)}|=m+1.$ Thus,
  $\lambda'^{(u)}$ is an admissible exponent vector for a product of $m+1$ generators of $I$.
 Set $b'^{(u)}=A\lambda'^{(u)}.$  Then we obtain
\[
b'^{(u)}
=
A\lambda^{(u)}+Ae_q
=
b^{(u)}+A_{\bullet q}.
\]
Consequently,  $b'^{(u)}_{i_v}=b^{(u)}_{i_v}+a_{i_vq}$  for every $u,v$.  In particular,  we can deduce that
$b'^{(u)}_{i_u}
=
b^{(u)}_{i_u}+a_{i_uq}
=
t_u+a_{i_uq}.$  It follows from  \eqref{eq:extension_dominance} that
 $b'^{(u)}_{i_u}>b'^{(u)}_{i_v}$  for every $v\neq u.$ Hence,  the new witness system is $S$-dominant.

Next, let $u\neq v$.  Since the original witness system is compatible, this implies that
 $b^{(u)}_{i_v}<b^{(v)}_{i_v}.$  Adding the same quantity $a_{i_vq}$ to both sides gives
\[
b^{(u)}_{i_v}+a_{i_vq}
<
b^{(v)}_{i_v}+a_{i_vq}.
\]
Therefore, we get $b'^{(u)}_{i_v}<b'^{(v)}_{i_v}.$ Thus the new witness system is compatible.

The new distinguished exponents are $t'_u
=
b'^{(u)}_{i_u}
=
t_u+a_{i_uq}
\geq t_u
\geq1.
$

Finally, the escape system associated with the new witness system is
precisely
\[
\begin{cases}
\mu\in\mathbb N_0^s,\\
|\mu|=m+1,\\
(A\mu)_{i_u}<t_u+a_{i_uq},
\qquad u=1,\ldots,r.
\end{cases}
\]
By the definition of an extension column, this system has no solution.
This shows that  $\lambda'^{(1)},\ldots,\lambda'^{(r)}$ is a compatible $S$-dominant witness system for $I^{m+1}$.
By Theorem \ref{thm:matrix_power_witness}, it follows that  $P_S\in\operatorname{Ass}(R/I^{m+1}).$
Consequently,  $S\in\mathcal W_{m+1}(I).$ This finishes the proof.
\end{proof}


The preceding theorem gives a sufficient condition for lifting an
individual certified support.  Applying it separately to every
certified support at level $m$ gives the following consequence.

\begin{cor} \label{cor:omega_monotonicity}
Let $I\subseteq \mathbb K[x_1,\ldots,x_k]$ be a monomial ideal.  Suppose that for every
$S\in\mathcal W_m(I)$ there exists a compatible $S$-dominant witness system for $I^m$
admitting an extension column in the sense of Definition~\ref{def:extension_column}.  Then
 $\mathcal W_m(I)\subseteq\mathcal W_{m+1}(I),$ and consequently
 $|\mathcal W_m(I)| \leq |\mathcal W_{m+1}(I)|.$ If, in addition, the matrix witness criterion is complete for both
$I^m$ and $I^{m+1}$, then
$$\omega(I^{m+1})\geq\omega(I^m).$$
\end{cor}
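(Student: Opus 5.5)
The plan is to apply Theorem~\ref{thm:matrix_extension} one support at a time, and then pass from certified supports to associated primes using the completeness hypothesis.

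\textbf{Step 1: the inclusion $\mathcal W_m(I)\subseteq\mathcal W_{m+1}(I)$.} Fix $S\in\mathcal W_m(I)$. By hypothesis there is a compatible $S$-dominant witness system $\lambda^{(1)},\ldots,\lambda^{(r)}$ for $I^m$ and a column $A_{\bullet q}$ that is an extension column for it in the sense of Definition~\ref{def:extension_column}. A small point needs checking here: the hypothesis concerns \emph{some} compatible system, not necessarily one certifying $S\in\mathcal W_m(I)$. This does not matter, because Theorem~\ref{thm:matrix_extension} only uses $t_u\geq1$ and compatibility. The inequality $t_u\geq 1$ holds for any compatible system, since $t_u=b^{(u)}_{i_u}>b^{(u)}_{i_v}\geq 0$ when $r\geq2$. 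When $r=1$, I would read it off from the assumption that the system is the one certified in $\mathcal W_m(I)$. Theorem~\ref{thm:matrix_extension} then shows that $\lambda^{(u)}+e_q$ is a compatible $S$-dominant system for $I^{m+1}$. Its distinguished exponents satisfy $t'_u\geq t_u\geq1$, and its escape system $\mathcal E^{(q)}_{S,m+1}$ is infeasible. Hence $S\in\mathcal W_{m+1}(I)$. The cardinality inequality follows immediately.

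\textbf{Step 2: the omega inequality.} I interpret ``the matrix witness criterion is complete for $I^n$'' as the statement that every $P\in{\rm Ass}(R/I^n)$ is of the form $P_S$ with $S\in\mathcal W_n(I)$. Combined with Theorem~\ref{thm:matrix_power_witness} (soundness), this gives a bijection $S\mapsto P_S$ from $\mathcal W_n(I)$ onto ${\rm Ass}(R/I^n)$. The map is injective because distinct subsets of $[k]$ give distinct monomial primes, exactly as in Corollary~\ref{cor:omega_power_lower_bound}. So $\omega(I^n)=|\mathcal W_n(I)|$ for $n=m,m+1$. Together with Step 1 this gives $\omega(I^m)=|\mathcal W_m(I)|\leq|\mathcal W_{m+1}(I)|=\omega(I^{m+1})$.

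\textbf{Main obstacle.} The only delicate point is the bookkeeping in Step 1: the extension-column hypothesis must be attached to a witness system whose escape system is infeasible and whose $t_u\geq1$. I would handle this by first observing that $t'_u\geq1$ is automatic from $t'_u=t_u+a_{i_uq}$. Condition (E1) and the new escape system are then exactly what membership in $\mathcal W_{m+1}(I)$ demands, so the argument goes through for whichever system the hypothesis provides. Everything else is a direct citation.
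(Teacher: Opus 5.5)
Your proposal follows the paper's proof: apply Theorem~\ref{thm:matrix_extension} to each $S\in\mathcal W_m(I)$ to get the inclusion, then use completeness (together with Theorem~\ref{thm:matrix_power_witness} and injectivity of $S\mapsto P_S$) to identify $\mathcal W_n(I)$ with $\{S\subseteq[k] : P_S\in{\rm Ass}(R/I^n)\}$ for $n=m,m+1$. One correction to your optional side-check of $t_u\ge1$, which the paper's proof does not make: for $r=1$ your appeal to ``the certified system'' is circular, and $t'_1\ge1$ is not automatic from $t'_1=t_1+a_{i_1q}$; it does hold, though, because a certifying system for $\{i_1\}\in\mathcal W_m(I)$ with threshold $\tilde t_1\ge1$ and infeasible escape system forces $(A\lambda)_{i_1}\ge\tilde t_1\ge1$ for every $\lambda$ with $|\lambda|=m$, and in particular for whichever system the hypothesis supplies.
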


\begin{proof}
Let $S\in\mathcal W_m(I).$ By hypothesis, there exists a compatible $S$-dominant witness system
admitting an extension column.  Theorem~\ref{thm:matrix_extension}
therefore gives $S\in\mathcal W_{m+1}(I).$ Hence, $\mathcal W_m(I)\subseteq\mathcal W_{m+1}(I),$
and so $|\mathcal W_m(I)|\leq|\mathcal W_{m+1}(I)|.$

Now assume that the matrix witness criterion is complete for both
$I^m$ and $I^{m+1}$.  Then  $\mathcal W_m(I)=\Sigma_m(I)$ and $\mathcal W_{m+1}(I)=\Sigma_{m+1}(I),$
 where $$\Sigma_n(I)=\{S\subseteq[k]:P_S\in\operatorname{Ass}(R/I^n)\}.$$
Thus,   $\omega(I^m)=|\Sigma_m(I)| =|\mathcal W_m(I)|$ and
 $\omega(I^{m+1})=|\Sigma_{m+1}(I)|=|\mathcal W_{m+1}(I)|.$ The inclusion
 $\mathcal W_m(I)\subseteq\mathcal W_{m+1}(I)$ therefore yields $\omega(I^m) \leq \omega(I^{m+1}).$
\end{proof}


\begin{rem}\label{rem:extension_column}
The escape condition in Definition~\ref{def:extension_column} is
essential.  In general, infeasibility of the escape system at level
$m$ does not imply infeasibility of the corresponding system at level
$m+1$ after the thresholds are increased by the entries of an
extension column.  Thus the existence of an extension column is a
genuine additional hypothesis rather than a consequence of the
existence of the original witness system.

Moreover, the compatibility inequalities require no additional
condition on the extension column: because the same generator is added
to every component of the witness system, the same quantity
$a_{i_vq}$ is added to both sides of each compatibility inequality.
\end{rem}

\subsubsection{Application to edge ideals of simple graphs} \label{sec:edge_ideal_extension}
We now specialize the matrix extension criterion to edge ideals of
finite simple graphs. Let $G=(V,E)$ be a finite simple undirected graph with
 $V=\{x_1,\ldots,x_k\},$ and let
\[
I(G)=\bigl(x_ix_j:\{x_i,x_j\}\in E\bigr)
\subseteq R=K[x_1,\ldots,x_k]
\]
be its edge ideal. Write $E=\{e_1,\ldots,e_s\}$ and $e_q=\{x_{\alpha_q},x_{\beta_q}\}.$
The exponent matrix of \(I(G)\) is $A=(a_{iq})\in\{0,1\}^{k\times s},$
where
\[
a_{iq}
=
\begin{cases}
1,&\text{if }x_i\in e_q,\\
0,&\text{otherwise}.
\end{cases}
\]
Thus each column $A_{\bullet q}=\begin{pmatrix}a_{1q} & a_{2q} & \cdots & a_{kq}\end{pmatrix}^{T}$
 is the incidence vector of an edge
of \(G\), and has exactly two entries equal to \(1\).

For $\lambda=(\lambda_1,\ldots,\lambda_s)\in\mathbb N_0^s,$  the vector
\[ A\lambda = \begin{pmatrix} a_{11} & a_{12} & \cdots & a_{1s}\\ a_{21} & a_{22} & \cdots & a_{2s}\\ \vdots & \vdots & \ddots & \vdots\\ a_{k1} & a_{k2} & \cdots & a_{ks} \end{pmatrix} \begin{pmatrix} \lambda_1\\ \lambda_2\\ \vdots\\ \lambda_s \end{pmatrix} = \begin{pmatrix} \displaystyle\sum_{q=1}^s a_{1q}\lambda_q\\ \displaystyle\sum_{q=1}^s a_{2q}\lambda_q\\ \vdots\\ \displaystyle\sum_{q=1}^s a_{kq}\lambda_q \end{pmatrix},\]
 has a natural graph-theoretic interpretation. Namely,
 $(A\lambda)_i$  is the degree of the vertex \(x_i\) in the edge multigraph obtained
from \(G\) by taking each edge \(e_q\) with multiplicity \(\lambda_q\).
Consequently, a witness system for an edge ideal may be viewed as a
collection of edge-multisets whose degree vectors satisfy the required
dominance and compatibility conditions.


\begin{prop}[Edge-extension criterion] \label{prop:edge_extension}
Let \(G\) be a finite simple graph and let \(I=I(G)\) be its edge ideal. Suppose that
$S=\{i_1,\ldots,i_r\}\in\mathcal W_m(I)$ and that $\lambda^{(1)},\ldots,\lambda^{(r)}\in\mathbb N_0^s$
is a compatible \(S\)-dominant witness system for \(I^m\). Put $b^{(u)}=A\lambda^{(u)}$ and
$t_u=b^{(u)}_{i_u}.$   Let  $e_q=\{x_p,x_\ell\}\in E(G)$ and suppose that the following dominance condition holds:
\begin{equation}
b^{(u)}_{i_u}+a_{i_uq}
>
b^{(u)}_{i_v}+a_{i_vq}
\qquad
\text{for all }u\neq v.
\tag{G1}
\label{eq:edge_extension_dominance}
\end{equation}
Assume, in addition, that the system
\begin{equation}
\begin{cases}
\mu\in\mathbb N_0^s,\\
|\mu|=m+1,\\
(A\mu)_{i_u}<t_u+a_{i_uq},
\qquad u=1,\ldots,r
\end{cases}
\tag{G2}
\label{eq:edge_extension_escape}
\end{equation}
has no solution. Then we have $S\in\mathcal W_{m+1}(I).$ More precisely, the vectors
 $\lambda'^{(u)}=\lambda^{(u)}+e_q,$ where  $u=1,\ldots,r$,  form a compatible \(S\)-dominant witness system for \(I^{m+1}\).
\end{prop}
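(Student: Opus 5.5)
The plan is to reduce Proposition \ref{prop:edge_extension} directly to Theorem~\ref{thm:matrix_extension}. Specialize the general setup to $I=I(G)$: its exponent matrix $A$ is the incidence matrix of $G$, with entries in $\{0,1\}$. The column $A_{\bullet q}$ corresponding to the edge $e_q=\{x_p,x_\ell\}$ has $a_{pq}=a_{\ell q}=1$ and zeros elsewhere. Condition \eqref{eq:edge_extension_dominance} is, verbatim, condition \eqref{eq:extension_dominance} of Definition~\ref{def:extension_column} for this column. Likewise, the system \eqref{eq:edge_extension_escape} is, verbatim, the system $\mathcal E^{(q)}_{S,m+1}$. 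Hence $A_{\bullet q}$ is an extension column for the given compatible $S$-dominant witness system, in the sense of Definition~\ref{def:extension_column}.

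Once this identification is made, I invoke Theorem~\ref{thm:matrix_extension}. It gives that the vectors $\lambda'^{(u)}=\lambda^{(u)}+e_q$ (here $e_q$ is the unit vector of $\mathbb N_0^s$, not the edge) form a compatible $S$-dominant witness system for $I^{m+1}$. It also gives $t'_u=t_u+a_{i_uq}\ge t_u\ge 1$. Finally, the corresponding escape system is exactly \eqref{eq:edge_extension_escape}, which is infeasible. Theorem~\ref{thm:matrix_power_witness} then yields $P_S\in\operatorname{Ass}(R/I^{m+1})$, and therefore $S\in\mathcal W_{m+1}(I)$.

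For readability, I would also write out what \eqref{eq:edge_extension_dominance} means graph-theoretically, though the proof does not need it. Since $a_{i_uq}\in\{0,1\}$, the condition only becomes more restrictive when $x_{i_v}\in e_q$ and $x_{i_u}\notin e_q$. In that case it requires $b^{(u)}_{i_u}\ge b^{(u)}_{i_v}+2$ in the multigraph-degree interpretation. The condition is automatic whenever $e_q$ meets $\{x_{i_1},\dots,x_{i_r}\}$ in neither or both endpoints, or when the intersection point is $x_{i_u}$ itself.

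The only step needing care is the notational clash: $e_q$ denotes both the edge and the unit vector in $\mathbb N_0^s$. I would state explicitly that the extension vector is the unit vector indexed by $q$, and that the $q$-th column of $A$ is the incidence vector of the edge $e_q$. Beyond this bookkeeping there is no real obstacle, since the hypotheses were chosen to match Definition~\ref{def:extension_column} exactly.
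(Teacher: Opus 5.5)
Your proposal is correct and matches the paper. The paper's proof re-runs the argument of Theorem~\ref{thm:matrix_extension} inline: it shifts each $\lambda^{(u)}$ by the unit vector, gets dominance from \eqref{eq:edge_extension_dominance}, keeps compatibility by adding the same $a_{i_vq}$ to both sides, and identifies the new escape system with \eqref{eq:edge_extension_escape}; your observation that $A_{\bullet q}$ is an extension column, followed by citing that theorem, is a legitimate shortcut to the same thing.

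One small correction to your optional graph-theoretic aside. When both endpoints of $e_q$ lie in $\{x_{i_1},\ldots,x_{i_r}\}$ and $r\ge 3$, condition \eqref{eq:edge_extension_dominance} is not automatic for those $u$ with $x_{i_u}\notin e_q$. For such $u$ it demands $b^{(u)}_{i_u}\ge b^{(u)}_{i_v}+2$ whenever $x_{i_v}\in e_q$, so the remark should be stated pair by pair, for each $(u,v)$.
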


\begin{proof}
For each \(u=1,\ldots,r\), define $\lambda'^{(u)}=\lambda^{(u)}+e_q.$ Since
 $|\lambda^{(u)}|=m,$ this implies that  $|\lambda'^{(u)}|=m+1.$ Thus, each \(\lambda'^{(u)}\) is an admissible exponent vector for a
product of \(m+1\) generators of \(I\). Moreover, we have
\[
A\lambda'^{(u)}
=
A\lambda^{(u)}+Ae_q
=
b^{(u)}+A_{\bullet q}.
\]
Consequently, $(A\lambda'^{(u)})_i=b^{(u)}_i+a_{iq}$ for every \(i\). It follows from \eqref{eq:edge_extension_dominance} that
\[
(A\lambda'^{(u)})_{i_u}
>
(A\lambda'^{(u)})_{i_v}
\qquad
\text{for every }v\neq u.
\]
Hence, the new witness system is \(S\)-dominant. Compatibility is preserved automatically. Indeed, if \(u\neq v\), then
the original witness system satisfies
\[
b^{(u)}_{i_v}<b^{(v)}_{i_v}.
\]
Adding the same quantity \(a_{i_vq}\) to both sides gives that $b^{(u)}_{i_v}+a_{i_vq}
<
b^{(v)}_{i_v}+a_{i_vq}.$ Therefore, $(A\lambda'^{(u)})_{i_v}
<
(A\lambda'^{(v)})_{i_v}.$ Thus the new witness system remains compatible. The new distinguished exponents are
 $t'_u=(A\lambda'^{(u)})_{i_u}=t_u+a_{i_uq}.$ Accordingly,  the escape system associated with the new witness system is
exactly the system in \eqref{eq:edge_extension_escape}, which is
infeasible by assumption. Therefore, $\lambda'^{(1)},\ldots,\lambda'^{(r)}$ form a compatible \(S\)-dominant witness system for \(I^{m+1}\).
This yields that  $S\in\mathcal W_{m+1}(I),$ as claimed.
\end{proof}


The preceding proposition has a direct graph-theoretic interpretation:
one adds the same edge to every member of the witness system.

\begin{rem} \label{rem:edge_extension_interpretation}
Suppose  $e_q=\{x_p,x_\ell\}.$ Then $A_{\bullet q}=e_p+e_\ell,$
where \(e_p,e_\ell\) are the corresponding standard basis vectors of
\(\mathbb N_0^k\). Thus, $b'^{(u)}=b^{(u)}+e_p+e_\ell.$
In other words, passing from \(b^{(u)}\) to \(b'^{(u)}\) increases the
degrees of precisely the two endpoints \(x_p\) and \(x_\ell\) by one.
Explicitly,
\[
b'^{(u)}_i
=
\begin{cases}
b^{(u)}_i+1,&i\in\{p,\ell\},\\
b^{(u)}_i,&i\notin\{p,\ell\}.
\end{cases}
\]
Thus the dominance condition
\eqref{eq:edge_extension_dominance} can be checked directly from the
degree vectors of the witness multigraphs.
Notice, however, that the existence of such an extension edge is not
automatic. Adding an edge may increase the degree of a competing
vertex without increasing the distinguished degree, thereby destroying
the dominance inequalities. The escape condition must also be
verified separately at level \(m+1\).
\end{rem}


\begin{thm}[Persistence criterion for certified supports of edge ideals] \label{thm:edge_ideal_persistence}
Let \(G\) be a finite simple graph and let \(I=I(G)\) be its edge ideal. Suppose that for
every $S\in\mathcal W_m(I)$ there exist a compatible \(S\)-dominant witness system
$\lambda^{(1)},\ldots,\lambda^{(r)}$ for \(I^m\) and an edge \(e_q\in E(G)\) such that
\eqref{eq:edge_extension_dominance} holds and the escape system
\eqref{eq:edge_extension_escape} is infeasible. Then  $\mathcal W_m(I) \subseteq \mathcal W_{m+1}(I).$

    If, in addition, the matrix witness criterion is complete for both
\(I^m\) and \(I^{m+1}\), then  $\omega(I^{m+1})\geq\omega(I^m).$
\end{thm}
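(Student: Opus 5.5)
The plan is to reduce Theorem~\ref{thm:edge_ideal_persistence} to Proposition~\ref{prop:edge_extension}, in exactly the way Corollary~\ref{cor:omega_monotonicity} was derived from Theorem~\ref{thm:matrix_extension}. The argument is essentially formal and has two parts: the inclusion $\mathcal W_m(I)\subseteq\mathcal W_{m+1}(I)$, and the inequality of omega invariants under the completeness hypothesis.

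For the inclusion, fix an arbitrary $S=\{i_1,\ldots,i_r\}\in\mathcal W_m(I)$. By hypothesis we have a compatible $S$-dominant witness system $\lambda^{(1)},\ldots,\lambda^{(r)}$ for $I^m$ and an edge $e_q\in E(G)$ such that \eqref{eq:edge_extension_dominance} holds and \eqref{eq:edge_extension_escape} is infeasible. These are precisely the hypotheses of Proposition~\ref{prop:edge_extension}. That proposition shows that $\lambda^{(u)}+e_q$, $u=1,\ldots,r$, form a compatible $S$-dominant witness system for $I^{m+1}$, with distinguished exponents $t'_u=t_u+a_{i_uq}$ and infeasible escape system.

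To conclude membership in $\mathcal W_{m+1}(I)$, we must also check $t'_u\ge 1$, as required by the definition of $\mathcal W_n(I)$. This holds because $t'_u\ge t_u\ge 1$, the last inequality being part of the definition of $S\in\mathcal W_m(I)$ for the chosen system. There is one subtlety here. The hypothesis says that for every $S$ there \emph{exist} a system and an edge, but the positivity $t_u\ge1$ is guaranteed only for a system certifying $S\in\mathcal W_m(I)$. I will read the hypothesis as referring to such a certifying system, as is implicit in Proposition~\ref{prop:edge_extension} and Theorem~\ref{thm:matrix_extension}. Alternatively, positivity can be recovered directly: if $t_u=0$ for some $u$, then dominance $b^{(u)}_{i_u}>b^{(u)}_{i_v}\ge 0$ fails when $r\ge2$. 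When $r=1$, dominance and compatibility impose no conditions on the single system, so positivity must be taken from the certifying choice. Since $S$ was arbitrary, the inclusion follows.

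For the second statement, assume the matrix criterion is complete for $I^m$ and $I^{m+1}$. This means $\mathcal W_n(I)=\Sigma_n(I)=\{S: P_S\in{\rm Ass}(R/I^n)\}$ for $n=m,m+1$. Every associated prime of a monomial ideal is of the form $P_S$ (properties (iv),(v) of Section~\ref{sec_monomial}), and distinct $S$ give distinct primes. Hence $\omega(I^n)=|\mathcal W_n(I)|$ for $n=m,m+1$, and the inclusion yields $\omega(I^m)\le\omega(I^{m+1})$. Equivalently, one can simply observe that edge ideals are a special case of Corollary~\ref{cor:omega_monotonicity}, since \eqref{eq:edge_extension_dominance} and \eqref{eq:edge_extension_escape} say exactly that $A_{\bullet q}$ is an extension column.

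The main obstacle is not mathematical depth but bookkeeping: making sure the positivity condition $t_u\ge1$ is carried along, so that $S$ genuinely lands in $\mathcal W_{m+1}(I)$ rather than merely admitting a compatible dominant system. Everything else is a direct invocation of Proposition~\ref{prop:edge_extension} and Theorem~\ref{thm:matrix_power_witness}.
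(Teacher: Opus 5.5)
Your proposal is correct and matches the paper's proof: fix $S\in\mathcal W_m(I)$, apply Proposition~\ref{prop:edge_extension} to get $S\in\mathcal W_{m+1}(I)$, and then use $\mathcal W_n(I)=\Sigma_n(I)$ for $n=m,m+1$ to compare cardinalities and obtain $\omega(I^{m+1})\geq\omega(I^m)$. Your extra check that $t'_u\ge 1$ covers a point the paper's proof of Proposition~\ref{prop:edge_extension} skips: for $r\ge 2$ it follows directly from \eqref{eq:edge_extension_dominance}, and for $r=1$ you take it from the certifying system.
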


\begin{proof}
Let $S\in\mathcal W_m(I).$ By hypothesis, \(S\) admits a compatible \(S\)-dominant witness system
and an edge \(e_q\) satisfying the hypotheses of Proposition~\ref{prop:edge_extension}. Therefore,
 $S\in\mathcal W_{m+1}(I).$ Since \(S\) was arbitrary, we get $\mathcal W_m(I) \subseteq \mathcal W_{m+1}(I).$
Now,  assume that the matrix witness criterion is complete for both
\(I^m\) and \(I^{m+1}\). Thus, we have $\mathcal W_m(I)=\Sigma_m(I)$ and
$\mathcal W_{m+1}(I)=\Sigma_{m+1}(I),$ where
\[
\Sigma_n(I)
=
\{S\subseteq[k]:P_S\in\operatorname{Ass}(R/I^n)\}.
\]
This implies that
$$\omega(I^m)=|\Sigma_m(I)|=|\mathcal W_m(I)| \; \text{ and } \;
\omega(I^{m+1})
=
|\Sigma_{m+1}(I)|
=
|\mathcal W_{m+1}(I)|.$$
The inclusion $\mathcal W_m(I)\subseteq\mathcal W_{m+1}(I)$  therefore gives that
 $\omega(I^m)\leq \omega(I^{m+1})$, as required.
\end{proof}


\begin{rem}
\label{rem:edge_ideal_caution}

The preceding theorem is a sufficient criterion for persistence of
certified supports; it does not assert that every edge ideal
automatically satisfies the extension hypothesis.
In particular, the theorem alone does not imply the unconditional
containment
\[
\operatorname{Ass}(R/I(G)^m)
\subseteq
\operatorname{Ass}(R/I(G)^{m+1}).
\]
To obtain such a conclusion from the witness criterion, one must verify
both that the criterion is complete at the relevant powers and that
every relevant support admits an extension edge satisfying the
dominance and escape conditions.
Thus the matrix extension method reduces the persistence problem to a
concrete condition on the incidence matrix of the graph.
\end{rem}

\section{Appendix}\label{sec_appendix}

In this appendix we present the Macaulay2 code for determining witnesses via Theorem \ref{thm_witness_any_monomial}.

\begin{tiny}
\begin{verbatim}

nonemptySubsets = n -> drop(subsets toList(0..n-1), 1);

exponentVectors = I -> flatten apply(flatten entries mingens I, exponents);

listIdeal = (e,R) -> monomialIdeal apply(e, i -> (gens R)#i);

condition1 = (G, p, g) -> all(0..#p-1, i -> (
        ei = (G#(g#i))#(p#i);
        ej = apply(select(0..#p-1, j -> j =!= i), j -> (G#(g#j)#(p#i)));
        ei > max ej
    )
);

condition2 = (G, p, g) -> (
    c = toList(set toList(0..#G-1) - set g);
    all(c, j -> any(toList(0..#p-1), i -> (G#j)#(p#i) >= (G#(g#i))#(p#i)))
);

constructWitness = (R, G, g, p) -> (
    n := numgens R;
    E := apply(toList(0..n-1), j -> (
        if member(j, p) then ( -- if that variable was chosen for the prime p...
            pos := position(p, k -> k == j); -- find the corresponding generator...
            (G#(g#pos))#j - 1  -- set the exponent of that variable
        ) else (
            max apply(G, f -> f#j)
        )
    ));
    product apply(gens R, E, (var, e) -> var^e)
);

witnesses = (I, R) -> (
    n = numgens R;
    P := nonemptySubsets n; -- generate list of possible primes
    G := exponentVectors I; -- generate exponent vectors of generators of I
    results := {}; -- initialize storage of results

    apply(P, p -> (
        if (#p==1) then (
            if (min apply(G, f -> f#(p#0))>=1) then (
                E = apply(toList(0..n-1), i -> (
                    ei := apply(G, f -> f#i);
                    if (i==p#0) then (
                        (min ei) - 1
                    ) else (
                        max ei
                    )
                ));
                w = product apply(gens R, E, (var, e) -> var^e);
                results = append(results, {w, listIdeal(p, R)});
            )
        ) else (
        S := subsets(toList(0..#G-1), #p); -- compute all subsets of size #p to index G
        scan(S, s -> ( -- scan through subsets of size #p to find generators
            L := permutations s; -- find all permutations of a chosen list
            v := select(1, L, g -> condition1(G, p, g) and condition2(G, p, g)); 
            -- check if any permutation passes conditions
            if #v > 0 then (
                results = append(results, {constructWitness(R, G, v#0, p), listIdeal(p, R)});
                break;
            );
        )));
    ));
    results -- return results
);

\end{verbatim}
\end{tiny}

The Macaulay2 output for Example \ref{exm_assoc} is the following; we start at line i8 since in the previous lines are typed the functions defined above.\\

i8 : R = QQ[x\_0,x\_1,x\_2]

\medskip

o8 = $R$

\medskip

o8 : PolynomialRing

\medskip

i9 : I = ideal(x\_0\textasciicircum3*x\_1\textasciicircum2*x\_2\textasciicircum3,x\_0\textasciicircum2*x\_1\textasciicircum3*x\_2,x\_0*x\_1\textasciicircum4,x\_2\textasciicircum4)

\medskip

o9 = ideal$(x_0^3x_1^2x_2^3,x_0^2x_1^3x_2,x_0x_1^4,x_2^4)$

\medskip

o9 : Ideal of $R$

\medskip

i10 : witnesses(I,R)

\medskip

o10 = \{\{$x_1^4x_2^3$, monomialIdeal$(x_0,x_2)$\}, \{$x_0^3x_1^3$, monomialIdeal$(x_1,x_2)$\}, \{$x_0x_1^3x_2^3$, monomialIdeal$(x_0,x_1,x_2)$\}\}

\medskip

o10 : List

Note that the first witness, $x_1^4x_2^3$, corresponding to the associated prime $\langle x_0,x_2\rangle$ is the same as the one we obtained in Example \ref{exm_assoc}, but the other witnesses are different.


\vspace{1.2cm}

\noindent\textbf{Acknowledgements}\\

All computations were performed using Macaulay2, a (free) software system for research in algebraic geometry (\cite{GrSt}).

\bigskip
\noindent\textbf{Funding Declaration}\\

The authors received no financial support for the research, authorship, or publication of this article.

\bigskip

\noindent\textbf{Data availability statement}\\

This manuscript has no associated data.

\bigskip

\noindent\textbf{Disclosure statement}\\

The authors declare no financial or non-financial competing interests.

\bigskip

\noindent\textbf{AI Declaration}\\

Except for the correction and optimization of the lines of the Macaulay2 code presented in the Appendix, when Google Gemini was used, the authors declare that no generative artificial intelligence (GAI) tools were used anywhere else in the preparation, writing, analysis, or publication of this manuscript.

\renewcommand{\baselinestretch}{1.0}
\small\normalsize 
\bigskip
\bibliographystyle{amsalpha}

\end{document}